\documentclass{amsart}	
\usepackage{amsmath,amssymb,amsthm,amscd}
\usepackage{graphicx,subfigure}
\usepackage{color}
\usepackage{pb-diagram} 
\usepackage{tabularx}
\usepackage{multirow}
\usepackage{longtable}
\usepackage{hyperref}
\usepackage[foot]{amsaddr}
\usepackage{enumitem}
\usepackage{comment}
\usepackage{phonetic}

\usepackage{geometry}
 \geometry{
 a4paper,
 left=35mm, 
 right=35mm,
 top=25mm, 
 }

\newcommand{\blue}[1]{\bgroup\color{blue}{#1}\egroup}
\newcommand{\red}[1]{\bgroup\color{red}{#1}\egroup}
\newcommand{\magenta}[1]{\bgroup\color{magenta}{#1}\egroup}

\newtheorem{theorem}{Theorem}[section]
\newtheorem{proposition}[theorem]{Proposition}
\newtheorem{corollary}[theorem]{Corollary}

\newtheorem{lemma}[theorem]{Lemma}

\theoremstyle{remark} 
\newtheorem{remark}[theorem]{Remark}

\numberwithin{equation}{section}

\newcommand{\ep}{\varepsilon}

\renewcommand\epsilon{\varepsilon}

\newcommand{\cA}{{\mathcal A}}

\newcommand{\cD}{{\mathcal D}}

\newcommand{\cK}{{\mathcal{K}}}
\newcommand{\cL}{{\mathcal L}}

\newcommand{\cN}{{\mathcal N}}

\newcommand{\cX}{{\mathcal X}}
\newcommand{\cV}{{\mathcal V}}

\newcommand{\RR}{{\mathbb R}}
\newcommand{\CC}{{\mathbb C}}

\newcommand{\TT}{{\mathbb T}}
\newcommand{\ZZ}{{\mathbb Z}}


\newcommand{\abs}[1]{|{#1}|}
\newcommand{\Abs}[1]{\left|{#1}\right|}
\newcommand{\norm}[1]{\left\|{#1}\right\|}
\newcommand{\Norm}[1]{\left\|{#1}\right\|}
\newcommand{\aver}[1]{\langle{#1}\rangle}

\newcommand\omegap{{\omega_p}}

\newcommand\wnorm[3]{{\Abs{\norm{#1^L}_{#2}, \frac{1}{\gamma #3^\tau}  \norm{#1^N}_{#2}}}}
\newcommand\wnormeta{{\wnorm{\eta}{\rho}{\delta}}}

\newcommand\qwnormeta{{{\wnormeta}^2}}

\newcommand\Xh{{X_h}}

\newcommand\Xp{{X_p}}

\newcommand{\Xoper}[1]{\cX_{#1}}

\newcommand{\Dioph}[3]{\cD_{#1,#2}^{#3}}

\newcommand{\CR}{c_{\mathfrak{R}}}
\newcommand{\CRb}{{\hat c}_{\mathfrak{R}}}
\newcommand{\CRd}{c_{\mathfrak{R}}^1}
\newcommand{\CRdb}{{\hat c}_{\mathfrak{R}}^1}

\newcommand\Loper{{\mathfrak{L}_{\omega}}}
\newcommand\Roper{{\mathfrak{R}_{\omega}}}
 
\newcommand{\comp}{{\!\:\circ\!\,}}

\newcommand\K{{K}}
\newcommand\Ki{{\bar K}}
\newcommand\Kn{{\bar{\bar K}}}

\newcommand\E{{E}}
\newcommand\Ei{{\bar E}}
\newcommand\En{{\bar{\bar E}}}

\newcommand\ttop{{\!\top}}

\newcommand\DK{{\Dif\K}}
\newcommand\DKi{{\Dif\Ki}}
\newcommand\DKn{{\Dif\Kn}}
\newcommand\DKT{({\Dif\K})^{\ttop}}
\newcommand\DKiT{({\Dif\Ki})^{\ttop}}
\newcommand\DKnT{({\Dif\Kn})^{\ttop}}

\renewcommand\L{{L}}
\newcommand\LT{{\L}^\ttop}
\newcommand\Li{{\bar{L}}}
\newcommand\LiT{{\Li}\phantom)\!\!^\ttop}
\newcommand\Ln{{\bar {\bar{\L}}}}
\newcommand\LnT{{\Ln}\phantom)\!\!^\ttop}

\newcommand\N{{N}}
\newcommand\NT{{\N}^\ttop}

\newcommand\Nn{{\bar {\bar{\N}}}}
\newcommand\NnT{{\bar{\bar\N}}\phantom)\!\!^{\ttop}}

\newcommand\OK{{\Omega\comp\K}}

\newcommand\GL{{G_{\L}}}

\newcommand\GLn{{G_{\Ln}}}
\newcommand\invGL{{G_{\L}^{\text{-}1}}}

\newcommand\B{{B}}

\newcommand\Bn{{\bar{\bar B}}}

\newcommand\T{{T}}

\newcommand\Tn{{\bar{\bar T}}}

\newcommand\OmegaDK{{\Omega_{\text{\tiny ${\rm D}K$}}}}
\newcommand\OmegaL{{\Omega_{L}}}
\newcommand\OmegaN{{\Omega_{N}}}

\newcommand\etaL{{\eta^L}}
\newcommand\etaN{{\eta^N}}
\newcommand\etaNDK{{\eta^N_{\text{\tiny $\DK$}}}}
\newcommand\etaNXp{{\eta^N_{\text{\tiny $X_p$}}}}

\newcommand\averxiL{{\hat\xi^L_0}}
\newcommand\averxiN{{\hat\xi^N_0}}
\newcommand\xiL{\xi^L}
\newcommand\xiN{\xi^N}
\newcommand\xiLDK{{\xiL_{\text{\tiny $\DK$}}}}
\newcommand\xiLXp{{\xiL_{\text{\tiny $X_p$}}}}

\newcommand\etai{{\bar\eta}}

\newcommand\etaiN{{\etai^N}}

\newcommand\etan{{\bar {\bar \eta}}}
\newcommand\etanL{{{\etan}^L}}
\newcommand\etanN{{\etan^N}}

\newcommand\etanLu{{{\etan}^L_1}}
\newcommand\etanLd{{{\etan}^L_2}}
\newcommand\etanLt{{{\etan}^L_3}}
\newcommand\etanLq{{{\etan}^L_4}}
\newcommand\etanLc{{{\etan}^L_5}}

\newcommand\etaLs[1]{{\eta}_{#1}^L}
\newcommand\etaNs[1]{{\eta}_{#1}^N}
\newcommand\etanLs[1]{{\bar {\bar \eta}}_{#1}^L}
\newcommand\etanNs[1]{{\bar {\bar \eta}}_{#1}^N}

\renewcommand\to{\rightarrow}

\newcommand\ii{\mathrm{i}}
\newcommand\ee{\mathrm{e}}
\newcommand\re{\mathrm{Re}}
\newcommand\im{\mathrm{Im}}
\newcommand\id{\mathrm{id}}
\newcommand\dist{\mathrm{dist}}

\newcommand\dif{ {\mbox{\rm d}} }
\newcommand\Dif{ {\mbox{\rm D}} }
\newcommand\Tan{ {\mbox{\rm T}} }
\newcommand\pd{ \partial}

\newcommand\sform{\mbox{\boldmath $\omega$}}
\newcommand\aform{\mbox{\boldmath $\alpha$}}
\newcommand\gform{\mbox{\boldmath $g$}}

\newcommand\J{\mbox{\boldmath $J$}}

\newcommand\mani{{U}}
\newcommand\cmani{{\mathcal{U}}}
\newcommand\domPhi{{D}}
\newcommand\cdomPhi{{\mathcal{D}}}
\newcommand\localtimedomPhi{{B}}

\newcommand\Anal{{\cA}}

\newcommand\U{{\mathcal{U}}}
\newcommand\torus{{\mathcal{K}}}

\newcommand\LieO{\mathfrak{L}}     
\renewcommand\H{h}                   
\renewcommand\P{p}                   

\newcommand\fake{\nu}

\newcommand\fakeE{\kappa}
\newcommand\fakeetaN{\varkappa} 

\newcommand\Elag{\Omega_L} 
\newcommand\Esym{E_{\mbox{\tiny \rm sym}}} 

\newcommand\Ered{E_{\mbox{\tiny \rm red}}} 
\newcommand\EredLL{E_{\mbox{\tiny \rm red}}^{\mbox{\tiny $LL$}}}
\newcommand\EredLN{E_{\mbox{\tiny \rm red}}^{\mbox{\tiny $LN$}}}
\newcommand\EredNL{E_{\mbox{\tiny \rm red}}^{\mbox{\tiny $NL$}}}
\newcommand\EredNN{E_{\mbox{\tiny \rm red}}^{\mbox{\tiny $NN$}}}



\newcommand\DeltaK{\Delta K}


\newcommand\cteOmega{c_{\mbox{\tiny $\Omega$}}}
\newcommand\cteDOmega{c_{\mbox{\tiny ${\rm D}\Omega$}}}

\newcommand\cteJ{c_{\text{\tiny $J$}}}
\newcommand\cteDJ{c_{\text{\tiny $\Dif J$}}}

\newcommand\cteJT{c_{J^{\top}}}
\newcommand\cteDJT{c_{\text{\tiny $\Dif J^\ttop$}}}
\newcommand\cteG{c_{\mbox{\tiny $G$}}}
\newcommand\cteDG{c_{\mbox{\tiny ${\rm D}G$}}}

\newcommand\cteXH{c_{\text{\tiny $\Xh$}}}
\newcommand\cteDXH{c_{\text{\tiny ${\rm D} \Xh$}}}
\newcommand\cteDDXH{c_{\text{\tiny ${\rm D}^2 \Xh$}}}
\newcommand\cteDXHT{c_{\text{\tiny $({\rm D} \Xh)^\ttop$}}}

\newcommand\cteXp{c_{X_p}}
\newcommand\cteXpT{c_{X_p^\ttop}}
\newcommand\cteDXp{c_{\mbox{\tiny ${\rm D} X_p$}}}
\newcommand\cteDXpT{c_{\mbox{\tiny ${\rm D} X_p^\ttop$}}}

\newcommand\cteDPhi{c_{\text{\tiny $\Dif \Phi$}}}
\newcommand\cteTH{c_{T_\H}}

\newcommand\cteTh{c_{\text{\tiny $\T_\H$}}}
\newcommand\cteDTh{c_{\text{\tiny$\Dif \T_\H$}}}


\newcommand\sigmaDK{\sigma_{\mbox{{\tiny ${\rm D} K$}}}}
\newcommand\sigmaDKT{\sigma_{\mbox{{\tiny $({\rm D} K)\!^\ttop$}}}}

\newcommand\sigmaL{\sigma_{\mbox{{\tiny $\L$}}}}
\newcommand\sigmaLT{\sigma_{\mbox{{\tiny $\LT$}}}}
\newcommand\sigmaN{\sigma_{\mbox{{\tiny $\N$}}}}
\newcommand\sigmaNT{\sigma_{\mbox{{\tiny $\NT$}}}}

\newcommand\sigmaB{\sigma_B}
\newcommand\sigmaTinv{\sigma_{\mbox{{\tiny {$\aver{T}^{\mbox{-}1}$}}}}}

\newcommand\CE{C_{E}}
\newcommand\CET{C_{E^\ttop}}
\newcommand\CDE{C_{{\rm D}E}}
\newcommand\CDET{C_{({\rm D} E)^\ttop}}

\newcommand\Csym{C_{\mathrm{sym}}}





\newcommand\CL{\sigmaL}
\newcommand\CLT{\sigmaLT}
\newcommand\CN{\sigmaN}
\newcommand\CNT{\sigmaNT}

\newcommand\ttiny[1]{{\text{\tiny $#1$}}}
\newcommand\supN{{\ttiny{$N$}}}
\newcommand\supL{{\text{\tiny $L$}}}

\newcommand\CNOmegaL{C_{\Omega_{\text{\tiny $L$}}}^\supN}
\newcommand\CNOmegaN{C_{\Omega_\supN}^\supN}

\newcommand\CLE{C_{E}^{\text{\tiny $L$}}}
\newcommand\CLET{C_{E^\ttop}^{\text{\tiny $L$}}}
\newcommand\CLDE{C_{{\rm D}E}^{\text{\tiny $L$}}}
\newcommand\CLDET{C_{({\rm D}E)^\ttop}^{\text{\tiny $L$}}}

\newcommand\CNE{C_{E}^\supN}
\newcommand\CNET{C_{E^\ttop}^\supN}
\newcommand\CNDE{C_{{\rm D}E}^\supN}
\newcommand\CNDET{C_{({\rm D}E)^\ttop}^\supN}

\newcommand\CEredLN{C_{E_{\ttiny{\rm red}}^{\ttiny{$LN$}}}}

\newcommand\CEredNN{C_{E_{\ttiny{\rm red}}^{\ttiny{$NN$}}}}

\newcommand\CLEredLL{C^L_{E_{\ttiny{\rm red}}^{\ttiny{$LL$}}}}
\newcommand\CLEredLN{C^L_{E_{\ttiny{\rm red}}^{\ttiny{$LN$}}}}
\newcommand\CLEredNL{C^L_{E_{\ttiny{\rm red}}^{\ttiny{$NL$}}}}
\newcommand\CLEredNN{C^L_{E_{\ttiny{\rm red}}^{\ttiny{$NN$}}}}

\newcommand\CNEredLL{C^N_{E_{\ttiny{\rm red}}^{\ttiny{$LL$}}}}
\newcommand\CNEredLN{C^N_{E_{\ttiny{\rm red}}^{\ttiny{$LN$}}}}
\newcommand\CNEredNL{C^N_{E_{\ttiny{\rm red}}^{\ttiny{$NL$}}}}
\newcommand\CNEredNN{C^N_{E_{\ttiny{\rm red}}^{\ttiny{$NN$}}}}

\newcommand\CaverxiN{C_{\ttiny{\averxiN}}}
\newcommand\CLaverxiN{C_{\ttiny{\averxiN}}^{\ttiny{L}}}
\newcommand\CNaverxiN{C_{\ttiny{\averxiN}}^{\ttiny{N}}}

\newcommand\CxiN{C_{\xi^{\text{\tiny $N$}}}}
\newcommand\CLxiN{C_{\xi^{\text{\tiny $N$}}}^{\text{\tiny $L$}}}
\newcommand\CNxiN{C_{\xi^{\text{\tiny $N$}}}^{\text{\tiny $N$}}}

\newcommand\CDeltaKi{C_{\Delta \Ki}}
\newcommand\CLDeltaKi{C_{\Delta \Ki}^L}
\newcommand\CNDeltaKi{C_{\Delta \Ki}^N}

\newcommand\CDeltaDKi{C_{\ttiny{\Delta \DKi}}}
\newcommand\CLDeltaDKi{C_{\ttiny{\Delta \DKi}}^\supL}
\newcommand\CNDeltaDKi{C_{\ttiny{\Delta \DKi}}^\N}
\newcommand\CDeltaDKiT{C_{\ttiny{\Delta\DKiT}}}
\newcommand\CLDeltaDKiT{C_{\ttiny{\Delta\DKiT}}^\L}
\newcommand\CNDeltaDKiT{C_{\ttiny{\Delta\DKiT}}^\N}

\newcommand\CDeltaLi{C_{\Delta \Li}}
\newcommand\CLDeltaLi{C_{\Delta \Li}^\L}
\newcommand\CNDeltaLi{C_{\Delta \Li}^\N}
\newcommand\CDeltaLiT{C_{\Delta \LiT}}
\newcommand\CLDeltaLiT{C_{\Delta \LiT}^\L}
\newcommand\CNDeltaLiT{C_{\Delta \LiT}^\N}

\newcommand\CEi{C_{\Ei}}
\newcommand\CLEi{C_{\Ei}^\L}
\newcommand\CNEi{C_{\Ei}^\N}


\newcommand\QCetaiN{Q_{\etaiN}}


\newcommand\CxiL{C_{\xi^{\text{\tiny $L$}}}}
\newcommand\CLxiL{C_{\xi^{\text{\tiny $L$}}}^{\text{\tiny $L$}}}
\newcommand\CNxiL{C_{\xi^{\text{\tiny $L$}}}^{\text{\tiny $N$}}}

\newcommand\CDeltaKn{C_{\Delta \Kn}}
\newcommand\CLDeltaKn{C_{\Delta \Kn}^L}
\newcommand\CNDeltaKn{C_{\Delta \Kn}^N}

\newcommand\CDeltaDKn{C_{\Delta {\rm D}\Kn}}
\newcommand\CLDeltaDKn{C_{\Delta {\rm D}\Kn}^L}
\newcommand\CNDeltaDKn{C_{\Delta {\rm D}\Kn}^N}
\newcommand\CDeltaDKnT{C_{\Delta({\rm D}\Kn)^\ttop}}
\newcommand\CLDeltaDKnT{C_{\Delta({\rm D}\Kn)^\ttop}^L}
\newcommand\CNDeltaDKnT{C_{\Delta({\rm D}\Kn)^\ttop}^N}

\newcommand\CDeltaLn{C_{\Delta \Ln}}
\newcommand\CLDeltaLn{C_{\Delta \Ln}^L}
\newcommand\CNDeltaLn{C_{\Delta \Ln}^N}
\newcommand\CDeltaLnT{C_{\Delta \LnT}}
\newcommand\CLDeltaLnT{C_{\Delta \LnT}^L}
\newcommand\CNDeltaLnT{C_{\Delta \LnT}^N}

\newcommand\CDeltaGLn{C_{\Delta G_{\Ln}}}
\newcommand\CLDeltaGLn{C_{\Delta G_{\Ln}}^\L}
\newcommand\CNDeltaGLn{C_{\Delta G_{\Ln}}^\N}

\newcommand\CDeltaBn{C_{\Delta \Bn}}
\newcommand\CLDeltaBn{C_{\Delta \Bn}^\L}
\newcommand\CNDeltaBn{C_{\Delta \Bn}^\N}

\newcommand\CDeltaNn{C_{\Delta \Nn}}
\newcommand\CLDeltaNn{C_{\Delta \Nn}^\L}
\newcommand\CNDeltaNn{C_{\Delta \Nn}^\N}

\newcommand\CDeltaNnT{C_{\Delta \NnT}}
\newcommand\CLDeltaNnT{C_{\Delta \NnT}^\L}
\newcommand\CNDeltaNnT{C_{\Delta \NnT}^\N}

\newcommand\CDeltaTn{C_{\Delta \Tn}}
\newcommand\CLDeltaTn{C_{\Delta \Tn}^L}
\newcommand\CNDeltaTn{C_{\Delta \Tn}^N}

\newcommand\CDeltaiTn{C_{\Delta \aver{\Tn}^{\text{-}1}}}
\newcommand\CLDeltaiTn{C_{\Delta \aver{\Tn}^{\text{-}1}}^L}
\newcommand\CNDeltaiTn{C_{\Delta \aver{\Tn} ^{\text{-}1}}^N}

\newcommand\CEn{C_{\En}}
\newcommand\CLEn{C_{\En}^\L}
\newcommand\CNEn{C_{\En}^\N}

\newcommand\CLLiexiL{C_{\LieO\xi^{\L}}^L}
\newcommand\CNLiexiL{C_{\LieO\xi^{\L}}^N}
\newcommand\CLiexiL{C_{\LieO\xi^{\L}}}

\newcommand\QCetanN{Q_{\etanN}}

\newcommand\QCetanL{Q_{\etanL}}

\newcommand\QCetanLu{Q_{\etanLu}}
\newcommand\QCetanLd{Q_{\etanLd}}
\newcommand\QCetanLt{Q_{\etanLt}}
\newcommand\QCetanLq{Q_{\etanLq}}
\newcommand\QCetanLc{Q_{\etanLc}}

\newcommand\QCetan{Q_{\etan}}

\newcommand\CT{C_T}




\newcommand\CtheoE{\mathfrak{C}}
\newcommand\CtheoDeltaK{\mathfrak{C}_{\DeltaK}}
\newcommand\CtheoDeltaDK{\mathfrak{C}_{\Delta{\rm D} \K}}
\newcommand\CtheoDeltaDKT{\mathfrak{C}_{\Delta({\rm D} \K)^\ttop}}

\newcommand\CtheoDeltaL{\mathfrak{C}_{\mbox{\tiny{$\Delta\L$}}}}
\newcommand\CtheoDeltaLT{\mathfrak{C}_{\mbox{\tiny{$\Delta\LT$}}}}
\newcommand\CtheoDeltaN{\mathfrak{C}_{\mbox{\tiny{$\Delta\N$}}}}
\newcommand\CtheoDeltaNT{\mathfrak{C}_{\mbox{\tiny{$\Delta\NT$}}}}
\newcommand\CtheoDeltaB{\mathfrak{C}_{\Delta\B}}
\newcommand\CtheoDeltaiT{\mathfrak{C}_{\Delta\aver{T}^{\text{-}1}}}


\newcommand\CLieK{C_{\LieO K}}
\newcommand\CLieL{C_{\LieO L}}
\newcommand\CLieLT{C_{\LieO L^\ttop}}
\newcommand\CLieB{C_{\LieO B}}

\newcommand\CLieN{C_{\LieO N}}

\newcommand\CLieGL{C_{\LieO G_L}}


\newcommand\hCDelta{\hat{C}_\Delta}

\allowdisplaybreaks

\begin{document}
\title[A modified parameterization method]{
A modified parameterization method \\ for invariant Lagrangian tori \\ for
partially integrable Hamiltonian systems
}
\date{\today}

\author{Jordi Llu\'is Figueras$^{\mbox{1}}$}
\address[1]{Department of Mathematics, Uppsala University, 
Box 480, 751 06 Uppsala, Sweden}
\email{figueras@math.uu.se}

\author{Alex Haro$^{\mbox{2,3}}$}
\address[2]{Departament de Matem\`atiques i Inform\`atica, Universitat de Barcelona,
Gran Via 585, 08007 Barcelona, Spain.}
\address[3]{Centre de Recerca Matem\`atica, Edifici C, Campus Bellaterra, 08193 Bellaterra, Spain}
\email{alex@maia.ub.es}

\begin{abstract}
In this paper we present an a-posteriori KAM theorem for the existence of an
$(n-d)$-parameters family of $d$-dimensional isotropic invariant tori with
Diophantine frequency vector $\omega\in \RR^d$, of type $(\gamma,\tau)$,  for
$n$ degrees of freedom Hamiltonian systems with $(n-d)$ independent first
integrals in involution. If the first integrals induce a Hamiltonian action of
the $(n-d)$-dimensional torus, then we can produce  $n$-dimensional Lagrangian
tori with frequency vector of the form $(\omega,\omegap)$, with
$\omegap\in\RR^{n-d}$.  In the light of the parameterization method, we design
a (modified) quasi-Newton method for the invariance equation of the
parameterization of the torus, whose proof of convergence from an initial
approximation, and under appropriate non-degeneracy conditions, is the object
of this paper. We present the results in the analytic category, so the initial
torus is real-analytic in a certain complex strip of size $\rho$, and the
corresponding error in the functional equation is $\varepsilon$.  We heavily
use geometric properties and the so called automatic reducibility to deal
directly with the functional equation and get convergence if $\gamma^{-2}
\rho^{-2\tau-1}\varepsilon$ is small enough, in contrast with most of KAM
results based on the parameterization method, that get convergence if
$\gamma^{-4} \rho^{-4\tau}\varepsilon$ is small enough.  The approach is
suitable to perform computer assisted proofs.
\end{abstract}

\maketitle

\tableofcontents

\section{Introduction}

Persistence under perturbations of regular (quasi-periodic) motion is one of
the most important problems in Mechanics and Mathematical Physics, and has deep
implications in Celestial and Statistical Mechanics.  The seminal works  of
Kolmogorov \cite{Kolmogorov54}, Arnold \cite{Arnold63a} and Moser
\cite{Moser62} put the name to the KAM theory, that has become a full body of
knowledge that connects fundamental mathematical ideas in many different
contexts around the so-called small divisors.  See e.g. the popular book
\cite{Dumas14}, and the surveys \cite{BroerHS96,Llave01}.

Although KAM theory held for general dynamical systems under very mild
technical assumptions, its application to concrete systems became a challenging
problem.  With the advent of computers and new methodologies, the distance
between theory and practice was shortened (see e.g.
\cite{CellettiC88,LlaveR90,CellettiC07}).  One direction that have experienced
a lot of progress is the a-posteriori approach based on the parameterization
method \cite{Llave01,GonzalezJLV05,HaroCFLM16,Sevryuk14,FiguerasHL20}, that in
this context was originally known as KAM theory without angle-action
coordinates. This approach lead to the design  of a general methodology to
perform computer assisted proofs of existence of Lagrangian invariant tori
\cite{FiguerasHL17}, enlarging the threshold of validity to practically the one
predicted by numerical observations \cite{Greene79,MacKay93}, in academic
examples such as the Chirikov stardard map.  But new impulses have to be made
in order to make KAM theory fully applicable to realistic physical systems,
which often have extra first integrals, or  degeneracies.

The goal of this paper is to present a KAM theorem in a-posteriori format for
the existence of invariant Lagrangian cylinders for Hamiltonian systems with
first integrals in involution, see Theorem~\ref{thm:KAM}.  If the first
integrals induce a Hamiltonian action of a torus, then the theorem produces
invariant Lagrangian tori.  The presence of first integrals in involution is
usually treated with symplectic reduction techniques
\cite{Cannas01,MarsdenW74}, and then applying KAM theorems to the reduced
systems. For the sake of versatility, we do not pursue such changes of
variables, and we get $n$-dimensional Lagrangian invariant cylinders from a
single $d$-dimensional isotropic torus through a Reduction lemma, see
Lemma~\ref{lem:reduction lemma}, that avoids the use of symplectic reduction
techniques.  Of course, our results  work for systems without additional first
integrals in involution, that could be obtained after reduction techniques.

Although the setting of the paper is close to that in \cite{HaroL19}, we
incorporate some constructs in \cite{Villanueva17} to improve crucial
estimates. 
We present a (modified) quasi-Newton method for these systems. The method
solves exactly the same equations as in the quasi-Newton method in
\cite{HaroL19}, but differ in the last step with \cite{HaroL19} in the way the
updates of the parameterization of the torus are made. In particular, normal
corrections to the torus are made to improve the invariance of the torus, while
tangent corrections to the torus are made to conjugate the internal dynamics of
the torus to a linear flow. As a result, while in \cite{HaroL19} we got
convergence of the quasi-Newton method if $\gamma^{-4}
\rho^{-4\tau}\varepsilon$ is small enough, as in most of KAM papers based on
the parameterization method, in this paper we get convergence if $\gamma^{-2}
\rho^{-2\tau-1}\varepsilon$ is small enough, as in \cite{Villanueva17},
\cite{Villanueva18} (see also the prequel \cite{Villanueva08}), and in the KAM
Theorem \cite{Arnold63a,Neishtadt81,Poschel82}. But, while  the inspiring paper
\cite{Villanueva17} substitutes the invariance equation of the parameterization
method by three different conditions which are altogether equivalent to
invariance, here we suitably project the error of invariance in tangent and
normal components, as in \cite{HaroL19}. Also, our proof of the result heavily
relies on geometrical and reducibility properties, complementing the approach
in  \cite{Villanueva17}, that does not use (at least explicitly) these
properties. 

In order to emphasize the geometric properties, and for the sake of
versatility, we have considered symplectic structures other than the canonical.
This was one main point in the seminal paper \cite{GonzalezJLV05}. Here,
however, we have only considered symplectic structures that have  a compatible
almost-complex structure, i.e. that is inducing a Riemannian metrics, as in
\cite{GonzalezHL13}.  It is known that any symplectic manifold admits a
compatible almost-complex structure (see e.g. \cite{Cannas01}). 

This paper complements \cite{HaroL19} also in the sense that, from the algorithm derived in that paper (whose convergence is proved), 
one can produce approximations of the solutions of the invariance equations, and then apply the a posteriori KAM theorem derived in this paper, in combination with techniques introduced in \cite{FiguerasHL17}. There are situations in which the a posteriori KAM theorem in \cite{HaroL19} could fail to be applied in practical situations, in which computer resources time and memory are finite. For instance, when the tori are high dimensional and/or about to break, thus needing high order Fourier approximations and/or being the size of the analyticity strip $\rho$ very small. The improved estimates here could mitigate such problems, thus pushing further the domain of existence of KAM tori. 

As usual in KAM theory, and in particular in the parameterization method, the proof of existence of invariant tori is pursued by means of a quasi-Newton method in a scale of Banach spaces, here analytic functions. At each step the analyticity strip in which the objects are defined is reduced, and one has to control the whole sequence to get convergence to an object defined in a final analyticity strip. The way the bites are produced on the analyticity strip seems to influence the results in practical situations. We have included a digression on the fact that the best choice seems to be close to consider geometric series of bites with ratio $\frac12$.

The paper is organized as follows.
Section~\ref{sec:setting and KAM theorem} introduces the background and the geometric and analytical constructions, and the main result, Theorem~\ref{thm:KAM}, is presented at the end. The proof of Theorem~\ref{thm:KAM} is detailed in Section~\ref{sec:proof KAM}. An auxiliary lemma to control inverses of matrices is included in Appendix~\ref{app:invertibility}. 
In order to collect the long list of expressions leading to the explicit estimates and conditions of the theorems, we include
separate tables in the Appendix~\ref{app:constants}.  
We pay special attention in providing explicit and rather optimal bounds,
with an eye in the application of the theorems and in computer assisted proofs.

\section{The setting and the KAM theorem}\label{sec:setting and KAM theorem}

In this section we set the geometrical and analytical background of this paper,
and present the main result.  In Subsection~\ref{ssec:basic notation} we
establish the basic notation.  In Subsection~\ref{ssec:symplectic setting} we
introduce the geometrical setting of this paper, i.e. symplectic structures  on
open sets of $\RR^{2n}$ that admit compatible almost-complex structures.  In
Subsection~\ref{ssec:hamiltonian systems} we review standard definitions for
Hamiltonian systems and first integrals, with an eye in Hamiltonian actions of
tori.  In Subsection~\ref{ssec:invariant tori} we set the main equations of this
paper, to find invariant tori carrying quasi-periodic motion, and some
implications of the presence of first integrals in involution, that lead to a
reduction of the dimensionality of the problem of finding Lagrangian invariant
cylinders and tori, without the need of reducing the Hamiltonian itself, see
Lemma~\ref{lem:reduction lemma}.  In Subsection~\ref{ssec:reducibility} we
review some geometrical constructions and reducibility properties of invariant
tori, and present some implications of the presence of compatible triples, see
Proposition~\ref{prop:reducibility}.  In Subsection~\ref{ssec:quasi Newton} we
review the quasi-Newton method introduced in \cite{HaroL19}, and present a new
modified quasi-Newton method to solve the invariance equations.  In
Subsection~\ref{ssec:analytic setting} we introduce the spaces in which the
invariance equations are considered, i.e. spaces of analytic functions, and
review some key results regarding small divisors equations, see
Lemmas~\ref{lem:Russmann} and \ref{lem:Russmann-Cauchy}. Finally, in
Subsection~\ref{ssec:KAM theorem} we present the main result of this paper,
Theorem~\ref{thm:KAM}, an a posteriori  theorem on the existence of isotropic
invariant tori for Hamiltonian systems with first integrals in involution. The
proof of this theorems constitutes the bulk of this paper and is found
in Section~\ref{sec:proof KAM}.

\subsection{Basic notation}\label{ssec:basic notation}

We denote by $\RR^m$ and $\CC^m$ the vector spaces of $m$-dimensional vectors
with components in $\RR$ and $\CC$, respectively, endowed with the norm
\[
|v|= \max_{i=1,\dots,m} |v_i|.
\] 
Given $a,b\in\CC$, we also often use the notation $\Abs{a,b}=
\max\{|a|,|b|\}$.
We consider the real and imaginary projections 
$\re,\im: \CC^m\to \RR^m$, and identify 
$\RR^m \simeq \im^{\text{-}1}\{0\} \subset \CC^m$.
Given $U\subset \RR^m$ and $\rho>0$, the complex strip of size $\rho$ is 
$U_\rho= \{ \theta  \in \CC^m \,:\, \re\, \theta \in U \, , \, |\im\, \theta|<\rho \}$.
Given two sets $X,Y \subset \CC^{m}$, $\dist(X,Y)$ is defined as 
$\inf\{|x-y| \,:\, x\in X \, , \, y\in Y\}$.

We denote $\RR^{n_1 \times n_2}$ and $\CC^{n_1\times n_2}$ the spaces of $n_1
\times n_2$ matrices with components in $\RR$ and $\CC$, respectively,
identifying $\RR^m\simeq \RR^{m\times 1}$ and $\CC^m\simeq \CC^{m\times 1}$.  We
denote $I_n$ and $O_n$ the $n\times n$ identity and zero matrices, respectively.
The $n_1\times n_2$ zero matrix is represented by $O_{n_1\times n_2}$.  Finally,
we use the notation $0_n$ to represent the column vector $O_{n \times 1}$,
although we mostly write $0$ when the dimension is known from the context.
Matrix norms in both  $\RR^{n_1 \times n_2}$ and $\CC^{n_1\times n_2}$ are the
ones induced from the corresponding vector norms.  That is to say, for an $n_1
\times n_2$ matrix $M$, we have
\[
	|M| = \max_{i= 1,\dots,n_1} \sum_{j= 1,\dots, n_2} |M_{i,j}|.
\]
In particular, if $v\in \CC^{n_2}$, 
$|M v|\leq |M| |v|$. Moreover, $M^\ttop$ denotes the transpose of the matrix $M$, so that 
\[
 	|M^\ttop|= \max_{j= 1,\dots, n_2} \sum_{i= 1,\dots,n_1} |M_{i,j}|.
\]
Notice that $|M^\ttop|\leq n_1 |M|$ and
$|(Mv)^\ttop|\leq |M^\ttop|\, |v^\top|$, but also $|(Mv)^\ttop|\leq n_1 |M|\, |v|$ and 
$|(Mv)^\ttop|\leq n_2 |M^\ttop|\, |v|$.

Given an analytic function $f: \U\subset \CC^m \to \CC$, defined on an open set
$\U$, the action of the $r$-order derivative of $f$ at a point $x\in \U$ on  a
collection of vectors $v_1,\dots, v_r\in \CC^m$, with $v_k= (v_{1k}, \dots,
v_{mk})$, is
\[
	\Dif^r f(x) [v_1,\dots,v_r] = \sum_{\ell_1,\dots,\ell_r} \frac{\partial^r f}{\partial x_{\ell_1}\dots\partial x_{\ell_r}}(x)\ v_{\ell_1 1} \cdots v_{\ell_r r}, 
\]
where the indices $\ell_1,\dots,\ell_r$ run from $1$ to $m$.
This construction is extended
to vector and matrix-valued maps as 
follows: given
a matrix-valued map $M:\U\subset\CC^m\to \CC^{n_1\times n_2}$
(whose components $M_{i,j}$ are analytic functions), a point $x\in \U$,
and a collection of (column) vectors $v_1,\dots, v_r\in \CC^m$, 
we obtain an $n_1\times n_2$  matrix  $\Dif^r M(x) [v_1,\dots,v_r]$ 
such that 
\[
\left(\Dif^r M(x) [v_1,\dots,v_r]\right)_{i,j} = \Dif^r M_{i,j}(x) [v_1,\dots,v_r].
\] 
Notice that, if we split $M$ in its columns $M_{\cdot, j}$ for $j= 1,\dots, n_2$, so that 
$(M_{\cdot, j})_i= M_{i,j}$ for $i= 1,\dots, n_1$, we have 
\[
\Dif^r M(x) [v_1,\dots,v_r]= 
\begin{pmatrix}
\Dif^r M_{\cdot, 1}(x) [v_1,\dots,v_r] & \dots & \Dif^r M_{\cdot,n_2}(x) [v_1,\dots,v_r]
\end{pmatrix}.
\] 
For $r=1$, we will often write $\Dif M(x) [v]= \Dif^1 M(x) [v]$ for $v\in \CC^m$.

Given a function $f: \U \subset \CC^m \to \CC^n\simeq \CC^{n\times 1}$,
we can think of  $\Dif f$ as a matrix function 
$\Dif f: \U \to \CC^{n\times m}$, hence, $\Dif^1 f(x) [v]= \Dif f(x) v$ for $v\in\CC^m$.
Therefore,
we can apply the transpose to obtain a matrix function $(\Dif f)^\ttop$, 
which acts on $n$-dimensional vectors, while $\Dif f^\ttop=\Dif (f^\ttop)$ acts on $m$-dimensional vectors.
Hence, according to the above notation, the operators $\Dif$ and $(\cdot)^\ttop$ do not
commute. Therefore, in order to avoid confusion, we must pay attention to the use of parenthesis. 

A function $u : \RR^d \to \RR$ is 1-periodic if $u(\theta+e)=u(\theta)$ for all $\theta \in \RR^d$ and $e \in \ZZ^d$.
Abusing notation, we write
$u:\TT^d \to \RR$, where $\TT^d = \RR^d/\ZZ^d$ is the $d$-dimensional standard torus. 
Analogously, for $\rho>0$, a function $u:\RR^d_\rho\to \CC$
is 1-periodic if $u(\theta+e)=u(\theta)$ for all $\theta \in \RR^d_\rho$ and $e \in \ZZ^d$.
We also abuse notation and write  $u:\TT^d_\rho \to \CC$, where 
$\TT^{d}_{\rho}=  \{\theta\in \CC^d / \ZZ^d : |\im\ \theta| < \rho\}$ is the complex strip of  $\TT^d$ of width
$\rho>0$. 
We write the Fourier expansion of a periodic function as
\[
u(\theta)=\sum_{k \in \ZZ^d} \hat u_k \ee^{2\pi \ii k \cdot \theta}, \qquad \hat u_k =
\int_{\TT^d} u(\theta) \ee^{-2 \pi \ii k \cdot \theta} \dif \theta.
\]
and introduce the notation $\aver{u}:=\hat u_0$ for the average. Given $\omega\in\RR^d$, 
we define the operator $\Loper$ acting on $u$ as the Lie derivative of $u$ in the direction of the constant vector field $\dot\theta= -\omega$ on the torus:
\begin{equation}\label{def:Loper}
\Loper  u = -\Dif u\: \omega = -\sum_{i=1}^d \omega_i \frac{\pd u}{\pd \theta_i}.
\end{equation}
The corresponding Fourier series of $v= \Loper  u$ is 
\[
v(\theta)=\sum_{k \in \ZZ^d} \hat v_k \ee^{2\pi \ii k \cdot \theta}, \qquad \hat v_k = -2\pi \ii (k\cdot\omega) \hat u_k.
\]
The notation in this paragraph is extended to $n_1 \times n_2$
matrix-valued periodic maps $M: \TT^{d}_{\rho}\to\CC^{n_1\times n_2}$, for which
$\hat M_k \in \CC^{n_1 \times n_2}$ denotes the Fourier coefficient of index
$k\in \ZZ^d$.

\subsection{Symplectic setting} \label{ssec:symplectic setting}

In this paper we consider the phase space is an open set $\mani$ of $\RR^{2n}$,
whose points are denoted by $z= (z_1,\dots, z_{2n})$,  endowed with an {\em
exact symplectic form} $\sform= \dif \aform$, where the 1-form $\aform$ is
called \emph{action form}.  The setting and the results of this paper can be
easily adapted to other settings such as $\mani\subset \TT^k \times \RR^{2n-k}$
with $k\leq n$. See e.g. \cite{HaroCFLM16} for a discussion.

In order to simplify some of the geometrical constructs of this paper, we also
assume that $\mani$  is endowed with a Riemannian metric $\gform$ and an
anti-involutive linear isomorphism $\J:  \Tan \mani \to \Tan\mani$, i.e. $\J^2=
-I$,  such that $\forall z\in \mani, \forall u,v \in T_z \mani,$ $\sform_z(\J_z
u,v)=\gform_z(u,v)$.  It is said that $(\sform,\gform,\J)$ is a compatible
triple and that $\J$ endows $\mani$ with an almost-complex structure. The
anti-involution preserves both 2-forms $\sform$ and $\gform$.

We rather use the matrix representations of the previous objects, given by the
matrix-valued maps $a: \mani \longrightarrow \RR^{2n}$, representing the 1-form
$\aform$, and $\Omega,G, J : \mani \longrightarrow  \RR^{2n\times 2n}$,
representing the 2-forms $\sform$ and $\gform$, and the
anti-involution $\J$, respectively.  The fact that $\sform$ is closed reads
\[
 \frac{\partial \Omega_{r,s}}{\partial z_t} 
+\frac{\partial \Omega_{s,t}}{\partial z_r} 
+\frac{\partial \Omega_{t,r}}{\partial z_s} = 0,
\]
for any triplet $(r,s,t)$, and the fact that $\sform$ is exact,  with
$\sform= \dif \aform$ reads
\[
 \Omega =(\Dif a)^\ttop-\Dif a.
\]
Moreover,  $\Omega^\ttop = -\Omega$, and $\Omega$ is pointwise invertible.
Moreover, the metric condition of $\gform$ reads  $G^\ttop= G$ and it is
positive definite, and the compatibility conditions read
\[
J^\ttop \Omega= -\Omega J = G, \qquad J^2= -I_{2n}.
\]
Notice also that $\Omega= G J$.
These properties also imply the relations
\[
\Omega= J^\ttop \Omega J, \qquad G=  J^\ttop G J.
\end{equation*}

\begin{remark}
The prototype example of compatible triple is $(\sform_0,\gform_0,\J_0)$ in
$\RR^{2n}$, where $\sform_0$ is the \emph{standard symplectic structure},
$\sform_0= \sum_{i= 1}^n  {\rm d} z_{n+i}\wedge {\rm d}z_i$, $\gform_0$ is the
Euclidian metric, and $\J_0$ is the linear complex structure in $\RR^{2n}$ (as
a real vector space), coming from the complex structure in $\CC^n$. The matrix
representations of these objects are
\[
\Omega_0= \begin{pmatrix} O_n & -I_n \\ I_n & O_n \end{pmatrix},\quad 
G_0= \begin{pmatrix} I_n & O_n \\ O_n & I_n \end{pmatrix},\qquad
J_0= \begin{pmatrix} O_n & -I_n \\ I_n & O_n \end{pmatrix}.
\]
Moreover, an action form for $\sform_0$ is ${\aform_0}=\tfrac{1}{2}\sum_{i=
1}^n (z_{n+i}\ {\rm d} z_i-z_i \ \dif z_{n+i})$, which is represented as
\[
a_0(z)= \frac{1}{2} \begin{pmatrix} O_n & I_n \\ -I_n & O_n\end{pmatrix} z\,. 
\]
\end{remark}

\begin{remark} 
Even though usually KAM theory is presented in the stardard case, a notable
counterexample is the seminal paper \cite{GonzalezJLV05} that was followed by
other papers such as \cite{FontichLS09, GonzalezHL13,FiguerasHL17}.  Even more
general constructs were presented in chapter 4 of \cite{HaroCFLM16}.
\end{remark}

\subsection{Hamiltonian systems  and first integrals in involution}\label{ssec:hamiltonian systems}

Given a function ${\H : \mani\to \RR}$, the corresponding Hamiltonian vector
field $\Xh : \mani\to \RR^{2n}$ is the one such that $i_{\Xh} \sform =-\dif
\H$.  In coordinates, the Hamiltonian vector field $\Xh$ satisfies 
\[
\Xh(z)^\ttop \Omega(z) = -\Dif \H(z)\,,
\qquad
\mbox{i.e.,}
\qquad
\Xh(z)= \Omega(z)^{\text{-}1} (\Dif \H(z))^\ttop\,.
\]
Using Cartan's magic formula, the Lie derivative of $\sform$ in the direction
of $X_h$ vanishes, 
\begin{equation}\label{eq:prop2killSK}
\Dif \Omega [ \Xh ]
+ (\Dif \Xh)^\ttop \Omega
+ \Omega\: \Dif \Xh=
O_{2n}.
\end{equation}

The Poisson bracket of two functions $f$, $g$ is given by
$\{f,g\} = 
-\sform(X_f,X_g)$.
It is related to the Lie bracket through the well-known formula 
$
[X_f, X_g]= - X_{\{f,g\}},
$
that in coordinates is written as 
\[
\{f,g\} = -(X_f)^\ttop \Omega \: X_g= \Dif f\: X_g.
\]
In particular, $f$  is a first integral or preserved quantity of $X_g$ if and
only $\{f,g\}=0$, and it is said that $f,g$ are in involution. As a
consequence, 
\[
	\Dif X_f \: X_g  = \Dif X_g \: X_f
\]
and the corresponding flows commute.

We assume there is a {\em moment map} $\P:\mani\to \RR^{n-d}$, meaning that its
components $\P_1,\dots, \P_{n-d}$, jointly with $\H$, are pairwise in
involution functionally independent functions. We encode the corresponding
Hamiltonian vector fields as the columns of 
${\Xp: \mani \to \RR^{2n\times (n-d)}}$: 
\[
\Xp= \Omega^{\text{-}1} (\Dif p)^\ttop.
\]
The properties mentioned above are summarized as follows: $\Dif \P\: \Xp= 0$,
$\Dif \P\: \Xh= 0$ (and, then, $\Dif \Xp [\Xh] = \Dif \Xh \ \Xp$), and the
matrix $\begin{pmatrix} \Xh(z) & \Xp(z) \end{pmatrix}$ has (maximal) rank
$n-d+1$, for any $z\in \mani$.  

For $j= 1,\dots, n-d$, the vector fields $X_{\P_j}$ generate (local) flows
$\varphi^{j}: \domPhi_j\subset \RR\times\mani\to\mani$, for which we write
$\varphi^j_{s_j}(z)= \varphi^j(s_j,z)$ for $(s_j,z)\in \domPhi_j$.  The flows
commute (and also commute with the flow of $X_\H$). We then define
$\Phi:\domPhi \subset \RR^{n-d}\times\mani\to\mani$ as 
\begin{equation*}
\Phi(s,z) = \varphi_{s_1}^{1} \comp \cdots \comp \varphi_{s_{n-d}}^{n-d}(z), 
\end{equation*}
where 
\[
\domPhi=  \{ (s,z)\in \RR^{n-d}\times \mani \ | \ (s_{n-d},z)\in \domPhi_{n-d}, \dots,  (s_1, \varphi_{s_{2}}\comp \dots \comp \varphi_{s_{n-d}}^{n-d}(z))\in \domPhi_1\}.
\]
Since 
\[
\Dif_s\Phi = \Xp\comp \Phi,
\] 
we say that $\Phi$ is the (local) {\em moment flow} associated to $\Xp$, and
write $\Phi_s(z)= \Phi(s,z)$. Notice also that 
\[
\Dif_z \Phi \ X_p = X_p\comp \Phi,
\]
and 
\[
\Dif_z \Phi \ X_h = X_h\comp\Phi.
\]

The case $d= 1$ corresponds to the integrable case, and from now on we will assume $d>1$.
The case $d= n$ corresponds to not assuming the existence of first integrals in involution, and all the results of this paper hold for such a case.

\begin{remark}\label{rem:Hamiltonian action}
An important special case is when the moment map $p$ induces a Hamiltonian
torus action, that is $\domPhi= \RR^{n-d}\times \mani$ and the generated flow
$\Phi: \RR^{n-d}\times \mani\to \mani$ is periodic in all components of $s$. By
scaling {\em times} we can get all periods equal to one, and then, with a
slight abuse of notation, consider $\Phi:\TT^{n-d}\times\mani\to \mani$.
\end{remark}

\subsection{Invariant tori}\label{ssec:invariant tori}

In this paper, we refer to an embedding $\K : \TT^d \rightarrow \mani$  as a
parameterization of the torus $\cK= \K(\TT^d)$. 

Given 
$\omega \in \RR^d$ with $1\leq d\leq n$ , we say that $\K$ is invariant for
$\Xh$ with frequency vector $\omega$ if \begin{equation}\label{eq:inv:fv}
\Xh \comp K  + \Loper K= 0.
\end{equation}
This means that the $d$-dimensional torus $\torus=K(\TT^d)$ is invariant and
the internal dynamics is given by the constant vector field $\dot\theta=
\omega$.  Equation \eqref{eq:inv:fv} is called \emph{invariance equation for
$K$ and frequency $\omega$}.  Case $d= 1$ corresponds to $\torus$ being a
periodic orbit. We then assume $d\geq 2$ and $\omega$ to be {\em ergodic}: for
$k\in \ZZ^d\backslash\{0\}, k \cdot \omega\neq 0$. Hence, the flow on the torus
is quasi-periodic and non-resonant.

\begin{remark}
Other topologies can be considered for both the ambient manifold $\mani$ and the torus $\cK$. 
See e.g. \cite{HaroCFLM16} for a discussion.
\end{remark}

Ergodicity of $\omega$ implies additional geometric and dynamical properties of
the torus  $\cK$.  In particular, it is \emph{isotropic}, i.e. the pullback
$K^*\sform$ is zero  (see \cite{Herman86, Moser66c}), and it is contained in an
energy level of the Hamiltonian and of the additional first integrals (if any):
\[
\H\comp K = \aver{\H \comp K}, \quad \P\comp K= \aver{\P\comp K},
\]
since $\Loper (\H\comp K)= 0$ and $\Loper (\P\comp K)= 0$.

Another  consequence of the presence of extra first integrals in involution is
that an invariant torus $\torus$, with frequency vector $\omega$, induces a
family of invariant tori  $\torus_s = \Phi_s(\torus)$, with the same frequency
vector (with $s$ defined, a priori, in an open neighborhood $\localtimedomPhi$
of $0\in \RR^{n-d}$). If $K_s= \Phi_s\comp K$ is the parameterization of
$\torus_s$:
\[
\Xh \comp K_s  + \Loper K_s = 
(\Dif \Phi_s)\comp K\:  \Xh\comp K + (\Dif \Phi_s)\comp K \: \Loper K = 0.
\] 
We can think the family foliating an $n$-dimensional invariant object $\hat\torus$ 
parameterized by $\hat\K: \TT^d \times \localtimedomPhi \to \mani$ defined as 
\[
	\hat\K(\theta,s) = \Phi_s(K(\theta)).
\]
Notice that we can rephrase the previous argumentation by writing
\[
\Xh \comp \hat K  + \LieO_{(\omega,0)} \hat K= 0.
\]
This argument of getting $n$-dimensional invariant objects (including
$n$-dimensional invariant tori) from $d$-dimensional invariant tori is extended
in the following lemma.

\begin{lemma}[Reduction lemma]\label{lem:reduction lemma}
Let $\H:\mani\to \RR$ be a Hamiltonian for which there exists a moment map
$\P:\mani\to \RR^{n-d}$, being
$\Phi:\domPhi\subset \RR^{n-d}\times \mani \to \mani$ the corresponding moment
flow.  Let $f:p(\mani) \subset \RR^{n-d}\to \RR$ be a function defined on the
set of possible momenta, and let $\hat\H:\mani\to\RR$ be the {\em discounted
Hamiltonian}, defined as $\hat\H= \H - f\comp p$.  Let $K:\TT^d\to \mani$ be a
parameterization of a torus $\cK=
\K(\TT^d)$, invariant for $X_{\hat\H}$ with ergodic frequency $\omega\in\RR^d$,
i.e. 
\begin{equation}\label{eq:KinvarianceXhat}
	X_{\hat\H} \comp K + \LieO_{\omega} K= 0.
\end{equation}
Let $\localtimedomPhi$ be an open neighborhood of $0\in \RR^{n-d}$ such that 
$\localtimedomPhi\times \cK \subset \domPhi$.
Define $\hat\K: \TT^d \times \localtimedomPhi\to \mani$ as 
\[
	\hat\K(\theta, s)= \Phi(s, \K(\theta)),
\]
and  $p_0= \aver{p\comp\K}$, $\omega_{p_0}= (\Dif f (p_0))^\ttop$.
Then, 
\begin{equation}\label{eq:KhatinvarianceXhat}
	\Xh\comp \hat\K  + \LieO_{(\omega,\omega_{p_0})} \hat\K= 0.
\end{equation}
That is $\hat\cK= \hat\K(\TT^d \times \localtimedomPhi)$ is an invariant object for $\Xh$.

Moreover, if for a certain $\theta_0\in \TT^d$ there exists $S\in
\localtimedomPhi$  such that $\hat K(\theta_0,S)= K(\theta_0)$, then for all
$\theta\in\TT^d$ we have $\hat K(\theta,S)= K(\theta)$, we can take
$\localtimedomPhi= \RR^{n-d}$ and $\hat K$ is $S$-periodic in the $s$ variables
(i.e., $\hat K(\theta, s+S) =\hat K(\theta)$). That is $\hat\cK= \hat\K(\TT^d
\times \RR^{n-d})$ is an $n$-dimensional invariant torus for $\Xh$.
\end{lemma}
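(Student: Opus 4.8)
The statement has two parts: first the invariance identity \eqref{eq:KhatinvarianceXhat}, and second the periodicity/closure of $\hat\cK$ under the hypothesis that a single fiber point is recovered. For the first part, the plan is to differentiate $\hat\K(\theta,s)=\Phi(s,\K(\theta))$ and use the chain rule together with the commutation properties of the moment flow recorded in Subsection~\ref{ssec:hamiltonian systems}, namely $\Dif_s\Phi=\Xp\comp\Phi$, $\Dif_z\Phi\,X_h = X_h\comp\Phi$ and $\Dif_z\Phi\,X_p = X_p\comp\Phi$. Concretely, I would compute
\[
\Xh\comp\hat\K(\theta,s) + \LieO_{(\omega,\omega_{p_0})}\hat\K(\theta,s)
= X_h\comp\Phi(s,\K(\theta)) - \Dif_z\Phi(s,\K(\theta))\,\Dif\K(\theta)\,\omega - \Dif_s\Phi(s,\K(\theta))\,\omega_{p_0}.
\]
Using $\Dif_z\Phi\,X_h = X_h\comp\Phi$ I can pull the first term inside $\Dif_z\Phi$, and using $\Dif_s\Phi=\Xp\comp\Phi$ together with $\Dif_z\Phi\,X_p = X_p\comp\Phi$ I can rewrite the last term as $\Dif_z\Phi(s,\K(\theta))\,\Xp(\K(\theta))\,\omega_{p_0}$. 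Factoring out $\Dif_z\Phi(s,\K(\theta))$, what remains inside is
\[
X_h(\K(\theta)) + \Xp(\K(\theta))\,\omega_{p_0} - \Dif\K(\theta)\,\omega = X_{\hat\H}(\K(\theta)) + \LieO_\omega\K(\theta),
\]
where I use that $X_{\hat\H} = X_{\H - f\comp p}$ and that $X_{f\comp p} = \Dif(f\comp p)$-type computation gives $X_{f\comp p}(z) = \Omega^{-1}(\Dif f(p(z))\,\Dif p(z))^\ttop = \Xp(z)\,(\Dif f(p(z)))^\ttop$; evaluated at $z$ with $p(z)=p_0$ (which holds on $\cK$ since $p\comp\K$ is constant equal to $p_0$ by ergodicity, as noted just before the lemma) this is exactly $\Xp(z)\,\omega_{p_0}$. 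Hence the bracketed expression vanishes by \eqref{eq:KinvarianceXhat}, proving \eqref{eq:KhatinvarianceXhat}.

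\textbf{The closure statement.} For the second part, suppose $\hat\K(\theta_0,S)=\K(\theta_0)$ for some $\theta_0$ and some $S\in\localtimedomPhi$. Consider the two maps $\theta\mapsto\hat\K(\theta,S)=\Phi_S\comp\K(\theta)$ and $\theta\mapsto\K(\theta)$. Both are parameterizations of tori invariant for $X_{\hat\H}$ with the \emph{same} ergodic frequency $\omega$: indeed $\Phi_S\comp\K$ is invariant because $\Phi_S$ commutes with the flow of $X_{\hat\H}$ (as $\P$, hence $f\comp\P$, is in involution with $\hat\H$), so $X_{\hat\H}\comp(\Phi_S\comp\K) + \LieO_\omega(\Phi_S\comp\K) = \Dif\Phi_S\comp\K\,(X_{\hat\H}\comp\K + \LieO_\omega\K) = 0$. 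The plan is then to invoke uniqueness of the quasi-periodic solution through a given point: two solutions of the invariance equation with ergodic frequency $\omega$ agreeing at one parameter value agree everywhere, because $\theta\mapsto\K(\theta)$ and $\theta\mapsto\Phi_S\comp\K(\theta)$ both intertwine the rotation by $\omega$ on $\TT^d$ with the time-one flow of $X_{\hat\H}$, so the set $\{\theta : \hat\K(\theta,S)=\K(\theta)\}$ is invariant under the rotation flow; since it is nonempty (contains $\theta_0$) and closed, and the rotation flow is minimal by ergodicity of $\omega$, it is all of $\TT^d$. Therefore $\hat\K(\theta,S)=\K(\theta)$ for all $\theta$, i.e. $\Phi_S\comp\K = \K$. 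Consequently $\hat\K(\theta,s+S) = \Phi_{s+S}\comp\K(\theta) = \Phi_s\comp\Phi_S\comp\K(\theta) = \Phi_s\comp\K(\theta) = \hat\K(\theta,s)$, using the group property of the commuting moment flow; this also shows the image is unchanged along $s$-translations by $\ZZ S$ so we may extend $s$ to all of $\RR^{n-d}$ and $\hat\cK = \hat\K(\TT^d\times\RR^{n-d})$ is a genuine $n$-dimensional torus (after quotienting the $s$-domain by the lattice generated by $S$ together with whatever periods the torus action already has).

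\textbf{Main obstacle.} The routine chain-rule bookkeeping in the first part is straightforward once one is careful with transposes and with the identity $X_{f\comp p} = \Xp\,(\Dif f\comp p)^\ttop$; the only subtlety is that this identity is only needed, and only correct for our purposes, on the torus $\cK$ where $p$ is pinned to the constant value $p_0$, so one must keep track of \emph{where} things are evaluated. The genuinely substantive step is the uniqueness/minimality argument in the second part: one must argue carefully that agreement at one point of $\TT^d$ propagates to the whole torus. This rests on ergodicity (minimality of the irrational rotation) plus the fact that both parameterizations conjugate the \emph{same} rotation to the \emph{same} dynamical system, so their ``difference set'' is flow-invariant and closed; I expect this to be the part where the paper either cites a standard rigidity fact (e.g.\ \cite{Herman86, Moser66c} type arguments on uniqueness of embeddings up to the natural torus symmetry) or spends a short paragraph making it precise. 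The extension of the domain to $\RR^{n-d}$ and the resulting compactness of $\hat\cK$ then follow formally from the group law of the moment flow.
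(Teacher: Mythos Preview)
Your approach matches the paper's essentially line for line. Two small corrections: first, the sign on $\Xp(\K(\theta))\,\omega_{p_0}$ in your bracketed expression should be \emph{minus}, not plus --- the term $-\Dif_s\Phi\,\omega_{p_0}$ becomes $-\Dif_z\Phi\,\Xp(\K)\,\omega_{p_0}$, and with the correct sign the bracket is exactly $X_{\hat\H}\comp\K+\LieO_\omega\K$; second, the constancy of $p\comp\K$ is not established ``just before the lemma'' (that passage concerns $X_h$-invariant tori), so the paper re-verifies $\Loper(p\comp\K)=0$ inside the proof using that $p$ is in involution with $\hat h$ as well. For the closure part the paper writes your minimality argument concretely via the flow of $X_{\hat\H}$: $K(\theta_0+\omega t)=\hat\varphi_t(K(\theta_0))=\hat\varphi_t(\Phi_S(K(\theta_0)))=\Phi_S(K(\theta_0+\omega t))$, and then appeals to ergodicity --- no external rigidity citation is needed.
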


\begin{proof} 
The hypotheses \eqref{eq:KinvarianceXhat} of invariance of $K$ for the vector
field $X_{\hat \H}$ reads 
\[
0= X_h\comp K - X_p\comp K\: (\Dif f \comp p\comp K)^\ttop+ \LieO_{\omega} K.
\]
Then, since
\[
\Loper(p\comp K)= \Dif p \comp K\: \Loper K= - \Dif p \comp K\: (X_h\comp K - X_p\comp K\: (\Dif f \comp p\comp K)^\ttop)= 0, 
\]
and $\omega$ is ergodic, then $p\comp K= \aver{p\comp K}= p_0$ is constant. 

Then, since $\Dif_z\Phi_s(z) X_h(z)= X_h(\Phi_s(z))$ and 
$\Dif_z\Phi_s(z) X_p(z)= X_p(\Phi_s(z)) = \Dif_s\Phi_s(z)$, we get
\[
\begin{split}
X_h \comp \hat K + \LieO_{(\omega,\omega_{p_0})} \hat K
& = 
X_h\comp \Phi_s \comp K + \Dif_z\Phi_s \comp K\: \LieO_\omega K - X_p\comp \Phi_s\comp K\: \omega_{p_0}
\\
& 
=  \Dif_z\Phi_s\comp K \: (X_h\comp K - X_p\comp K\: (\Dif f \comp p\comp K)^\ttop + \LieO_\omega K) = 0,
\end{split}
\]
proving \eqref{eq:KhatinvarianceXhat}.

Let us assume now that $K(\theta_0)= \hat K(\theta_0,S)= \Phi_S(K(\theta_0))$.
Denote $\hat\varphi_t$ the (local) flow of $X_{\hat \H}$. Then, for all $t\in
\RR$,
\[
K(\theta_0 + \omega t) = \hat\varphi_t(K(\theta_0)) = \hat\varphi_t(\Phi_S(K(\theta_0))) = \Phi_S(\hat\varphi_t(K(\theta_0))) = 
\Phi_S(K(\theta_0 + \omega t)), 
\]
and the result follows from ergodicity of $\omega$.
\end{proof}

\begin{remark}\label{rem:uniqueness}
We can think of  $\K$ as the generator of the
$n$-dimensional object $\hat\cK$. The generator is defined up to a change of
phase, say $\alpha\in\RR^d$, and flying times, say $\beta\in \localtimedomPhi$,
since the parameterization $K_{\alpha,\beta}$ defined as 
\[
K_{\alpha,\beta}(\theta)= \Phi_{\beta}(K(\theta+\alpha)),
\] 
is also a generator of $\hat\cK$. This indeterminacy of the generator can be
fixed by imposing extra conditions apart from the invariance equation. Some
strategies are described in \cite{HaroL19}.
\end{remark}

\begin{remark}
In case of existence of $\theta_0\in \TT^d$  such that $\hat K(\theta_0,S)=
K(\theta_0)$ for a certain $S\in\RR^{n-d}$, Lemma~\ref{lem:reduction lemma}
establishes the existence of an $n$-dimensional invariant torus $\hat\cK$, with
frequency vector $\tilde \omega= (\omega, \omega_{p_0}/S)$ (we understand here
that the division of the two vectors is made componentwise). In other words, by scaling
periods we get a parameterization $\tilde K:\TT^{d}\times \TT^{n-d}\to \mani$ 
defined as 
\[
	\tilde K(\theta,\vartheta)= \hat K(\theta,S \vartheta)
\] 
(again considering the product $S\vartheta$ componenwise), 
that satisfies 
\[
\Xh\comp \tilde \K  + \LieO_{\tilde\omega} \tilde\K= 0.
\]
\end{remark}

\begin{remark}\label{rem:Hamiltonian torus action}
In case the moment map $p$ induces a Hamiltonian torus action, see
Remark~\ref{rem:Hamiltonian action}, we can consider the moment flow as a map
$\Phi: \TT^{n-d}\times \mani\to \mani$ (this is in fact the Hamiltonian action
of $\TT^{n-d}$ on $\mani$). In particular, for a fixed complementary frequency
$\omegap\in \RR^{n-d}$, to which we will refer to as a {\em moment frequency},
we consider the discounted Hamiltonian $\hat\H_{\omegap} = \H - \P^\ttop
{\omegap}$. Then, a parameterization $K:\TT^d\to\mani$ invariant for
$X_{\hat\H_\omegap}$ with frequency $\omega$ induces a parameterization $\tilde
K:\TT^n\to\mani$ invariant for $X_\H$ with frequency $\tilde\omega=
(\omega,\omegap)$.  We can also think of the moment frequency $\omegap$ as a
parameter, and the discounted Hamiltonian $\hat\H_\omegap$ as a family of
Hamiltonians.  From the results of this paper, one can obtain $n-d$ parametric
families of $n$-dimensional tori with fixed {\em  internal frequency} $\omega$,
parameterized by $\omegap$. 
\end{remark}

\subsection{Linearized dynamics and reducibility}\label{ssec:reducibility}

In this section we describe the geometric construction of a suitable symplectic
frame attached to a torus $\torus$ with respect to a Hamiltonian system $\Xh$,
possibly with first integrals $\P$, and an ergodic frequency $\omega\in\RR^d$.

In the following, for a matrix-valued map $V : \TT^d \rightarrow \RR^{2n \times
m}$, with $1\leq m \leq 2n$, we introduce the matrix-valued map $\Xoper{V}:\TT^d
\rightarrow \RR^{2n \times m}$ defined as
\begin{equation}\label{def:Xoper}
\Xoper{V} := \Dif \Xh\comp K\: V + \Loper V.
\end{equation}
If we think of $V$ as a parameterization of a frame of an $m$-dimensional vector
bundle $\cV$, then $\Xoper{V}$ corresponds to its infinitesimal displacement by
the flow of $\Xh$ around $\cK$.  We say that $V$ is invariant under (the
linearized equations of) $\Xh$ if $\Xoper{V}=O_{2n \times m}$.  We also define
the pullback of $\Omega$ and $G$ of $V$ on $K$ to be the matrix-valued maps
$\Omega_V, G_V: \TT^d \to \RR^{m\times m}$ defined as
\begin{equation*}\label{def:OmegaV}
	\Omega_V = V^\ttop \Omega\comp K\: V, 
\end{equation*}
and 
\begin{equation*}\label{def:GV}
G_V = V^\ttop G\comp K\: V,
\end{equation*}
respectively.  We say  that $V$ is isotropic if $\Omega_V= O_m$ and Lagrangian
if, moreover, $m= n$. We also say that $V$ is orthonormal if $G_V= I_m$.

We consider the matrix-valued map $L:\TT^d \rightarrow \RR^{2 n\times n}$ given
by
\begin{equation}\label{def:L}
L =
\begin{pmatrix} \DK & X_p\comp K \end{pmatrix}\,,
\end{equation}
and we assume that $\mathrm{rank}\, L(\theta)=n$ for every $\theta \in \TT^d$.
Notice that, while $\DK$ parameterizes the tangent bundle of $\cK$, $T\cK$ , $L$
parameterizes a subbundle $\cL$ of rank $n$ of the bundle $\Tan_{\torus} \mani$.
We refer to $L$ as the tangent frame (attached to $K$).

One can use the geometric structure of the problem to complement the above
frame.  From the several choices one could do (see e.g. \cite{HaroCFLM16} for a
discussion), we consider here the one  that is  specially tailored for a
compatible triple. In particular, we define $N:\TT^d \rightarrow \RR^{2n \times
n}$  as
\begin{equation*}\label{def:N}
N = J\comp K\: L\: B
\end{equation*}
where
\begin{equation*}\label{def:B}
B= G_L^{\text{-}1}.
\end{equation*}
Notice that $\N$ parameterizes another subbundle $\cN$ of rank $n$ of $\Tan_{\torus} \mani$.
We refer to $N$ as the normal frame (attached to $K$).

Finally, we define  the matrix-valued map $P:\TT^d \rightarrow \RR^{2n\times
2n}$ as the juxtaposition of matrix-valued maps $L,N:\TT^d \rightarrow
\RR^{2n\times n}$:
\begin{equation}\label{def:P}
P =
\begin{pmatrix} L  & N \end{pmatrix}.
\end{equation} 
We refer to $P$ as an (adapted) frame (attached to $\K$).

We  define then the torsion of the parameterization $\K$ (with respect to the
Hamiltonian $\H$) to be the matrix-valued map $T:\TT^d\to \RR^{n\times n}$
defined as 
\begin{equation}\label{def:T2}
T = N^\ttop\ T_h\comp K\ N,
\end{equation}
where $T_h:\mani\to \RR^{2n\times 2n}$  is the torsion of the Hamiltonian $\H$, defined as
\begin{equation*}\label{def:Th}
\begin{split}
T_h 
&= \Omega \left(  \Dif X_h + \Dif J [X_h] \ J + J \Dif X_h  J  \right) \\
& =\Omega  \ \Dif X_h - J^\ttop \Omega  \ \Dif X_h J + J^\ttop \Omega \ \Dif J [X_h],
\end{split}
\end{equation*}
where we use that $J^2= -I_{2n}$, so $\Dif J [X_h] J = - J \Dif J [X_h]$, and 
$G= -\Omega J= J^\ttop \Omega$. Notice that $T_h$ is symmetric:
\[
\begin{split}
T_h - T_h^\ttop 
& =
\Omega  \ \Dif X_h - J^\ttop \Omega  \ \Dif X_h J + J^\ttop \Omega \ \Dif J [X_h]
\\
& \phantom{=}
+  (\Dif X_h)^\ttop \Omega  - J^\ttop  (\Dif X_h)^\ttop \Omega J +  \Dif J^\ttop [X_h]\ \Omega J\\
&
= -\Dif \Omega[X_h]  + J^\ttop  \Dif \Omega[X_h] J +  J^\ttop \Omega \ \Dif J [X_h] + \Dif J^\ttop [X_h]\ \Omega J \\
& 
= -\Dif \Omega[X_h]  + \Dif(J^\ttop \Omega J)[X_h]
= O_{2n}.
\end{split}      
\]
As a result, the torsion $T$ of the parameterization $K$ is symmetric.

The use of the frame $P$ has several advantages. Among them, it produces a
natural and geometrically meaningful non-degeneracy condition (twist condition,
that is the invertibility of the average of the torsion $T$) in the KAM theorem,
and, most importantly, when the torus is invariant, it reduces the linearized
dynamics to a block-triangular form. 

\begin{proposition}\label{prop:reducibility}
If $K$ is invariant for $\Xh$ with ergodic frequency $\omega$, then:
\begin{enumerate}
\item $P$ is symplectic:
\begin{equation*}\label{eq:Psym}
P^\ttop \OK\: P = \Omega_0,
\end{equation*}
and, in particular, $L$ and $N$  parameterize complementary Lagrangian bundles
($\Tan_{\torus} \mani= \cL \oplus \cN$); 
\item $P$ reduces $\Dif \Xh\comp K$ to the block-triangular form $\Lambda:\TT^d
\rightarrow \RR^{2n\times 2n}$ given by
\begin{equation} \label{def:Lambda}
\Lambda
= \begin{pmatrix}
O_n &  T \\ 
O_n  & O_n
\end{pmatrix},
\end{equation}
where $T$ is defined in \eqref{def:T2}, so 
\begin{equation}\label{eq:reducibility}
\Dif \Xh\comp K\: P + \Loper P
= P\: \Lambda.
\end{equation}
\end{enumerate}
\end{proposition}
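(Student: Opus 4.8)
The plan is to establish the two claims in sequence, using the invariance equation \eqref{eq:inv:fv} and the geometric identities that come with the compatible triple.

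\emph{Part (1): symplecticity of $P$.} First I would compute $P^\ttop\,\OK\,P$ blockwise. The key input is that $\cK$ is isotropic, so $(\DK)^\ttop\,\OK\,\DK = O_d$ (this is $K^*\sform = 0$), and more precisely the whole tangent frame $L = \begin{pmatrix}\DK & X_p\comp K\end{pmatrix}$ is isotropic: one has $(\DK)^\ttop \OK\, X_p\comp K = (\Dif p\comp K)\,\DK$ — wait, more carefully, $\Omega\, X_p = (\Dif p)^\ttop$, and using the invariance equation $\DK\,\omega = -X_h\comp K$ one shows $\Loper(p\comp K) = 0$, hence $p\comp K$ is constant, hence $\Dif(p\comp K) = (\Dif p\comp K)\,\DK = 0$, which gives $(\DK)^\ttop\,\OK\, X_p\comp K = 0$; also $(X_p\comp K)^\ttop\,\OK\, X_p\comp K = 0$ since the $p_j$ are in involution. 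So $\Omega_L = L^\ttop\OK\, L = O_n$, i.e.\ $L$ is isotropic. Next, $N = J\comp K\, L\, B$ with $B = G_L^{-1}$, and using $J^\ttop\Omega = G$ one gets $L^\ttop\,\OK\, N = L^\ttop\,\OK\, J\comp K\, L\, B = -L^\ttop\, (J^\ttop\,\Omega)\comp K\, L\, B = -L^\ttop\, G\comp K\, L\, B = -G_L\, B = -I_n$; similarly $N^\ttop\,\OK\, L = I_n$, and $N^\ttop\,\OK\, N = B^\ttop L^\ttop (J^\ttop\Omega J)\comp K\, L\, B = B^\ttop\Omega_L B = O_n$ because $J^\ttop\Omega J = \Omega$. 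Assembling the four blocks gives $P^\ttop\,\OK\, P = \Omega_0$. Since $\Omega_0$ is nondegenerate, $L$ and $N$ each span Lagrangian bundles and $\Tan_\torus\mani = \cL\oplus\cN$.

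\emph{Part (2): reduction to block-triangular form.} I would compute the columns of $\Xoper{P} = \Dif\Xh\comp K\, P + \Loper P$ and show it equals $P\,\Lambda$. The left $n$ columns involve $\Xoper{L}$. For the $\DK$-columns: differentiating the invariance equation $\Xh\comp K + \Loper K = 0$ gives $\Dif\Xh\comp K\,\DK + \Loper(\DK) = O$, i.e.\ $\Xoper{\DK} = O$. For the $X_p\comp K$-columns: since $\Dif X_p[X_h] = \Dif X_h\, X_p$ (commuting flows) and using $\Loper(X_p\comp K) = -\Dif X_p\comp K\,(X_h\comp K) = -\Dif X_h\comp K\,(X_p\comp K)$ one gets $\Xoper{X_p\comp K} = O$. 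Hence $\Xoper{L} = O_{2n\times n}$, accounting for the zero left block of $\Lambda$. The right $n$ columns are $\Xoper{N}$, and here the content is that $\Xoper{N}$ lies in the range of $L$ (not just $P$) — i.e.\ the $(2,2)$ block vanishes — and the coefficient matrix of $L$ in $\Xoper{N} = L\,T$ is exactly the torsion. To see $\Xoper{N}\in\operatorname{ran}L$: since $P$ is symplectic, it suffices to show $N^\ttop\,\OK\,\Xoper{N} = O_n$; differentiating the identity $N^\ttop\,\OK\,N = O_n$ along the flow and using \eqref{eq:prop2killSK} (invariance of $\Omega$), the Leibniz rule for $\Loper$, and $\Xoper{N} = \Dif\Xh\comp K\,N + \Loper N$, the terms collapse to $N^\ttop\,\OK\,\Xoper{N} + (\Xoper{N})^\ttop\,\OK\,N = \Loper(N^\ttop\OK\,N) = 0$, plus an antisymmetry argument forces each term to vanish (using that $\Omega_N$ is already zero and the block structure). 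Writing $\Xoper{N} = L\,T_c$ for some matrix $T_c$, one extracts $T_c = -N^\ttop\,\OK\,\Xoper{N}$ via the symplectic relations; then a direct computation substituting the definition $N = J\comp K\,L\,B$, the formula $\Xoper{} = \Dif\Xh\comp K\,\cdot + \Loper\,\cdot$, the product rule, and the relations $G = -\Omega J = J^\ttop\Omega$ identifies $T_c$ with $N^\ttop\,T_h\comp K\,N = T$ after recognizing the combination $\Omega(\Dif X_h + \Dif J[X_h]\,J + J\,\Dif X_h\,J)$. Symmetry of $T$ is already guaranteed by the computation of $T_h - T_h^\ttop = O_{2n}$ preceding the proposition.

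\emph{Main obstacle.} The routine part is Part (1) — it is bookkeeping with the compatibility relations. The real work is the $(2,2)$-block vanishing in Part (2), i.e.\ showing $\Xoper{N}$ has no $N$-component, and then the somewhat delicate identification of the $L$-coefficient with the torsion $T$ through the messy chain of substitutions involving $\Dif J[X_h]$. The cleanest route to the vanishing of the $N$-component is probably to use the invariance of $\Omega$ under the linearized flow together with the already-established isotropy $\Omega_N = O_n$ and $\Omega_L = O_n$: the pullback of $\Omega$ by the frame, which is the constant matrix $\Omega_0$, must be preserved by $\Xoper{}$, forcing $\Xoper{P}$ to be $P$ times an element of the Lie algebra $\mathfrak{sp}(2n)$; combined with $\Xoper{L} = O$ (which kills two of the four blocks) and the fact that the remaining upper-left block would violate Lagrangianness unless it is zero, one is forced into exactly the form \eqref{def:Lambda}, and only the identification of $T$ remains as a computation.
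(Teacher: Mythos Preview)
Your overall strategy matches the paper's: establish $\Omega_L=O_n$, $\Omega_N=O_n$, $L^\ttop\OK N=-I_n$ by pure algebra with the compatible triple, show $\Xoper{L}=O$ by differentiating the invariance equation and using $[X_h,X_{p_j}]=0$, and then identify the torsion. The paper's own proof is terse (it defers most of Part~(2) to \cite{HaroL19}) and its only explicit computation is the chain for $\Loper N$ leading to $T=N^\ttop T_h\comp K\,N$, which you correctly describe as the delicate substitution involving $\Dif J[X_h]$.

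There is, however, a concrete mix-up in your argument for the vanishing of the $(2,2)$-block. You write ``to see $\Xoper{N}\in\operatorname{ran}L$ it suffices to show $N^\ttop\OK\,\Xoper{N}=O_n$'', and then propose to get this by differentiating $N^\ttop\OK N=O_n$. This is backwards: from $P^{-1}=\Omega_0^{-1}P^\ttop\OK$ one reads off that the $N$-component of a vector $v$ is $-L^\ttop\OK\,v$, while $N^\ttop\OK\,v$ is the $L$-component. So the condition you need is $L^\ttop\OK\,\Xoper{N}=O_n$, and it follows by applying $\Loper$ to the \emph{constant} $L^\ttop\OK\,N=-I_n$ together with \eqref{eq:prop2killSK} and $\Xoper{L}=O$; differentiating $\Omega_N=O_n$ only shows that $N^\ttop\OK\,\Xoper{N}$ is symmetric, which is the symmetry of $T$, not its vanishing. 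Relatedly, once $\Xoper{N}=L\,T_c$ you extract $T_c=N^\ttop\OK\,\Xoper{N}$ (no minus sign), in agreement with the paper's formula $T=N^\ttop\OK\,\Xoper{N}$.

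Your alternative route in the ``main obstacle'' paragraph is in fact the clean one and does work: from $\Loper(P^\ttop\OK P)=\Loper\Omega_0=O$ and \eqref{eq:prop2killSK} one gets $(\Xoper{P})^\ttop\OK P+P^\ttop\OK\,\Xoper{P}=O$, i.e.\ $\Lambda'^\ttop\Omega_0+\Omega_0\Lambda'=O$ where $\Xoper{P}=P\Lambda'$. Since $\Xoper{L}=O$ the left block-column of $\Lambda'$ is zero, and the $\mathfrak{sp}$ condition on $\Lambda'=\begin{pmatrix}O&A\\O&D\end{pmatrix}$ forces $D=O$ and $A=A^\ttop$. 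Your phrase about a ``remaining upper-left block'' is a slip; there is none once $\Xoper{L}=O$.
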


\begin{proof} The facts that $L$ is invariant, i.e. $\Xoper{L} = O_{2n \times
n}$, and Lagrangian, i.e. $\Omega_L= O_{n}$, follow directly from the invariance
of $K$ and the fact that the frequency $\omega$ is ergodic. The construction
leads to the Lagrangianity of $N$, i.e. $\Omega_N= O_{n}$, and the fact that $P$
is symplectic. The construction also leads to the reducibility property
\eqref{eq:reducibility} with 
\begin{equation*}
\label{def:T1}
T = N^\ttop \OK\: \Xoper{N}.
\end{equation*}
See e.g. \cite{HaroL19}. Finally,  $T= N^\ttop T_\H N$ follows from the
computation: 
\[
\begin{split}
\Loper   N 
& = (\Dif J)\comp K [\Loper   K] \ L B + J\comp K \ \Loper   L \ B +
J\comp K\  L \ \Loper   B \\
& = (\Dif J)\comp K [-X_h\comp K]\ L B - J\comp K\ \Dif X_h\comp K L \ B +
J\comp K \ L \ \Loper   B \\ 
&= (\Dif J)\comp K [X_h\comp K] \ J\comp K \ N + J\comp K\ \Dif X_h\comp K \ J\comp K N +
J\comp K \ L \ \Loper   B.
\end{split}
\]
\end{proof}

\begin{remark}
The torsion measures the symplectic area determined by the normal bundle and its
infinitesimal displacement.  Notice that, in the present paper, the torsion
involves geometrical and dynamical properties of both the torus and the first
integrals.
\end{remark}

\subsection{A (modified) quasi-Newton method}\label{ssec:quasi Newton}

In this section we outline a (modified) quasi-Newton method to obtain a solution
of the invariance equation \eqref{eq:inv:fv} from an initial
approximation. Sufficient conditions of the convergence of the method are
provided in Theorem \ref{thm:KAM}, whose proof is detailed in
Section~\ref{sec:proof KAM}. We focus here on the geometry of the method. 

Assume then we are given a parameterization $K:\TT^d\to\mani$. The  error of the
invariance is $E:\TT^d \to \RR^{2n}$, defined as
\[
E =  \Xh\comp K + \Loper  K.
\]

We may obtain a new parameterization 
\[
\bar K= K +\DeltaK
\]
by considering the linearized equation
\begin{equation}\label{eq:newton}
\Dif \Xh\comp K\: \DeltaK+ \Loper  \DeltaK
= - E.
\end{equation}
If the error $E$ is sufficiently small and we obtain a good enough
approximation of the solution $\DeltaK$ of \eqref{eq:newton}, then $\bar K$
provides a new parameterization with a new error $\bar E$ which is
quadratically small in terms of $E$. To do so, following the nowadays standard
practice \cite{GonzalezJLV05,HaroCFLM16} we may resort to a frame $P$ (here the
one defined in \ref{def:P}), so by writing 
\begin{equation}\label{eq:DeltaK}
\DeltaK = P\:  \xi= L \xiL + N \xiN,
\end{equation}
$\xi= (\xiL, \xiN):\TT^d\to\RR^{n}\times\RR^{n}$ is the new unknown. We think
of the components of $\xi$ as the tangent and normal components
of the correction.  The new approximation is 
\begin{equation}\label{eq:new-aproximation-old}
\bar K= K + L \xiL + N \xiN.
\end{equation}
Following e.g. \cite{GonzalezHL13,HaroL19}, using \eqref{eq:DeltaK},
multiplying both sides of \eqref{eq:newton} by $\Omega_0^{\text{-}1} P^\ttop
\OK\:$ (an approximation of $P^{\text{-}1})$, and skipping second order small
terms (in the error $E$ and correction $\xi$) one reaches  the block-triangular
system
\begin{equation}\label{eq:Lambda-eq}
\Lambda \: \xi +  \Loper   \xi \\ 
= {\eta},
\end{equation}
(see \eqref{def:Lambda} and \eqref{def:T2}), where 
\begin{equation}\label{def:eta}
	\eta= -\Omega_0^{\text{-}1} P^\ttop \OK\: E.
\end{equation}
We think of the components of $\eta= (\etaL,
\etaN):\TT^d\to\RR^{n}\times\RR^{n}$ as the tangent and normal components  (of
the negative) of the error $E$. 

Inspired by \cite{Villanueva17}, we may also consider  the new approximation as
\begin{equation}\label{eq:new-aproximation-new}
\Kn= \Phi_\xiLXp\comp (K  + N\xiN) \comp (\id + \xiLDK)
\end{equation}
where the unknowns are $\xiL= (\xiLDK,\xiLXp):\TT^d\to\RR^{d}\times\RR^{n-d}$
and $\xiN: \TT^d\to \RR^{n}$ and $\id$ is the identity map on $\TT^d$.  By
Taylor expanding the previous expression, we get
\[
\begin{split}
\Kn 
& = K + N \xiN + \DK\: \xiLDK + \Dif_s\Phi_0\comp K\: \xiLXp + \mbox{\em hot} \\
& = K + N \xiN + \DK\: \xiLDK + \Xp\comp K\: \xiLXp + \mbox{\em hot} \\
& = \bar K + \mbox{\em hot},
\end{split}
\]
where, as usual, $\mbox{\em hot}$ stands for {\em higher order  terms}. Hence,
the approximations \eqref{eq:new-aproximation-new} and
\eqref{eq:new-aproximation-old} differ in quadratically small terms, and the
correction terms $\xi= (\xiL,\xiN)$ may be computed by solving the triangular
system \eqref{eq:Lambda-eq}.

It turns out that the  triangular system \eqref{eq:Lambda-eq}, requires to
solve two cohomological equations consecutively. More specifically, the system
is 
\begin{eqnarray}
 \label{eq:stepL} T \xiN + \Loper   \xiL & = & \eta^L, \\
 \label{eq:stepN} \Loper   \xiN & = & \eta^N,
\end{eqnarray}
where 
\begin{eqnarray*}
\label{eq:defetaL} \etaL & = & - N^\ttop \OK\: E, \\
\label{eq:defetaN} \etaN & = & \phantom{-} L^\ttop \OK\: E,
\end{eqnarray*}
and the torsion $T$ is given 
by \eqref{def:T2}. 

In the solution of the triangular system it is crucial the fact that the
average of the normal component of the error, $\etaN$, is zero. Notice that 
\[
\etaN=
\begin{pmatrix}
(\DK)^\ttop \OK\: E \\
(X_p\comp K)^\ttop \OK\: E
\end{pmatrix} 
= 
\begin{pmatrix}
(\Dif(\H\comp K))^\ttop - \OmegaDK\omega \\
\Dif (p\comp K) \omega 
\end{pmatrix}.
\]
The fact that  $\aver{\OmegaDK} = O_d$ follows directly from the exact
symplectic structure, since $K^*\sform={\rm d}( K^{*}\aform)$,  from where 
$\aver{\etaN}= 0$  follows immediately. See \cite{HaroL19}.

Quantitative estimates for the solutions of such equations (under Diophantine
conditions of $\omega$) are obtained by applying R\"ussmann estimates, see
Lemma~\ref{lem:Russmann}.  Here we just mention that if we denote by $\Roper v$
the only zero-average solution $u$ of equation $\Loper u= v - \aver{v}$,  then,
since $\aver{\etaN}= 0$ (see the {\em compatibily condition} above) and 
$\aver{T}$ is invertible (the so-called {\em twist condition}),  $\xi=
(\xiL,\xiN)$ is given by
\begin{eqnarray*}
\label{eq:defxiN} \xiN &= &\aver{T}^{\text{-}1} \aver{\etaL-T\Roper\etaN}\; + \Roper \etaN, \\
\label{eq:defxiL} \xiL & = &\Roper(\etaL - T \xiN),
\end{eqnarray*}
is the solution of the system \eqref{eq:stepL},\eqref{eq:stepN} with
$\aver{\xiL}= 0_n$. While the average of the normal correction is selected  to
solve the equation for $\xiL$, and it is $\averxiN= \aver{\xiN}=
\aver{T}^{\text{-}1} \aver{\etaL-T\Roper\etaN}$, 
we have the freedom of choosing any
value for the average of the tangent correction, $\aver{\xiL}= \averxiL \in
\RR^n$. This is related with the freedom to select a particular generator of
the $n$-dimensional torus,  which is a $d$-dimensional torus, and a particular
phase, see Remark~\ref{rem:unicity}.
For the sake of simplicity, we select the solution with  $\averxiL= \aver{\xiL}=0_n$.

Recapitulating,  for future reference we describe one step of the (two) quasi-Newton
methods described above as follows: 
\begin{enumerate}
\item Compute the error of invariance: 
\begin{equation*}
\label{eq:error1}
E =  \Xh\comp K + \Loper  K.
\end{equation*}
\item Compute the tangent and normal frames to the torus:
\begin{eqnarray*}
L &=& \begin{pmatrix} \DK & X_p\comp\K\end{pmatrix},\\
N &=& J\comp K\: L\: B,
\end{eqnarray*}
where 
\begin{equation*}
B= (L^\ttop G\comp K\: L)^{\text{-}1}.
\end{equation*}
\item Compute the tangent and normal components of the error of invariance:
\begin{eqnarray*}
\label{eq:defetaL1} \etaL & = & - N^\ttop \OK\: E, \\
\label{eq:defetaN1} \etaN & = & \phantom{-} L^\ttop \OK\: E.
\end{eqnarray*}
\item Compute the torsion: 
\begin{equation*}\label{eq:T2}
T= N^\ttop\ T_h\comp K\ N,
\end{equation*}
where  
\begin{equation*}
T_h = \Omega \left(  \Dif X_h + \Dif J [X_h] \ J + J \Dif X_h  J  \right).
\end{equation*}
\item Compute the tangent and normal components of the correction, provided that 
$\aver{T}$ is invertible:
\begin{eqnarray*}
\label{eq:defxiN0} \averxiN &=& \aver{T}^{\text{-}1} \aver{\etaL-T\Roper\etaN}, \\
\label{eq:defxiN1} \xiN &= &\averxiN\; + \Roper \etaN, \\
\label{eq:defxiL1} \xiL & = &\Roper(\etaL - T \xiN),
\end{eqnarray*}
where $\Roper$ is the R\"ussmann operator (see Lemma~\ref{lem:Russmann}).
\item Compute the new approximation as:
\begin{itemize}
\item (quasi-Newton method)
\begin{equation*}
\bar K=  K  + L \xiL + N \xiN;
\end{equation*}
\item (modified quasi-Newton method)
\begin{equation*}
\Kn= \Phi_\xiLXp\comp (K  + N\xiN) \comp (\id + \xiLDK).
\end{equation*}
\end{itemize}
\end{enumerate}

As it is usual in the a-posteriori approach to KAM theory, the argument
consists in refining an initial approximation $K$ by means of the iterative
method and proving the convergence to a solution of the invariance equation.
The proof of the corresponding theorem to the quasi-Newton method is in
\cite{HaroL19}. The goal of this paper is proving such a result
for the modified version. 

\begin{remark}
We have seen that the difference of the two approaches
\eqref{eq:new-aproximation-old} and \eqref{eq:new-aproximation-new} are
quadratically small. As we will see, the way the correction $\xiL$ is included
in \eqref{eq:new-aproximation-new} results in a better behavior of the
analyticity properties. This is the main idea in \cite{Villanueva17}, in which
the invariance equation is replaced by three conditions which are altogether
equivalent to invariance.  Instead, we will keep the invariance equation
formulation. 
\end{remark}

\begin{remark}
An important feature of the quasi-Newton method is that it can be implemented in a computer. 
See e.g. \cite{HaroCFLM16} for some details of implementations in similar contexts,
using FFT. In this respect, it seems that the approach
\eqref{eq:new-aproximation-old} is better than \eqref{eq:new-aproximation-new},
since the second involves compositions of periodic functions that, in general,
are approximated by truncated Fourier series.  Composition of periodic
functions is much harder computationally than multiplications of periodic
functions, that can be done using Fast Fourier Transform. 
\end{remark}

\begin{remark}
Approaches as \eqref{eq:new-aproximation-old}  have been implemented as
computer assisted proofs \cite{FiguerasHL17}, for invariant KAM tori for exact
symplectic maps. We think that the new approach
\eqref{eq:new-aproximation-new} could result in better posed analytical bounds,
that could improve the efficiency of the computer assisted proofs.
\end{remark}

\subsection{Analytic setting }\label{ssec:analytic setting}

The proof of the convergence of the algorithm is presented in the analytic
category.  Hence, we work with real analytic functions defined in complex
neighborhoods of real domains.  We consider the sup-norms of
(matrix-valued) analytic maps and their derivatives (see the notation in
Section \ref{ssec:basic notation}).  That is, for $f: \U\subset \CC^m \to \CC$,
we consider 
\[
\norm{f}_\U= \sup_{x\in \U} |f(x)|,
\]
and
\[
\norm{\Dif^r f}_\U= \sum_{\ell_1,\dots,\ell_r} \Norm{\frac{\partial^r
f}{\partial x_{\ell_1}\dots\partial x_{\ell_r}}}_\U, \]
that could be infinite. 
For $M:\U \subset \CC^m\to \CC^{n_1\times n_2}$, we consider the norms  
\[
\norm{M}_\U= \max_{i= 1,\dots,n_1} \sum_{j= 1,\dots,n_2} \norm{M_{i,j}}_\U \,,
\]
\[
\norm{\Dif^r M}_\U= \max_{i= 1,\dots,n_1} \sum_{j= 1,\dots,n_2} \Norm{\Dif^r M_{i,j}}_\U\,,
\]
and notice, of course, that the norms $\norm{M^\ttop}_\U$ and $\norm{\Dif^r
M^\ttop}_\U$ are obtained simply by interchanging the role of the indices $i$
and $j$.

The above norms present Banach algebra-like properties.  For example, given $r$
analytic functions $v_1,\dots, v_r: \U\to \CC^m\simeq \CC^{m\times 1}$,
the function 
$\Dif^r M [v_1,\dots, v_r]: \U\subset \CC^m\to \CC^{n_1\times n_2}$ defined as 
\[
\Dif^r M [v_1,\dots, v_r](x)= \Dif^r M(x) [v_1(x),\dots, v_r(x)]
\]
is also analytic and satisfies
\begin{align*}
\norm{\Dif^r M [v_1,\dots, v_r]}_\U 
\leq \Norm{\Dif^r M}_\U \ \norm{v_1}_\U\cdots \norm{v_r}_\U\,.
\end{align*}
There is also a similar bound for the action of the transpose:
\begin{align*}
\norm{(\Dif^r M [v_1,\dots, v_r])^\ttop}_\U 
& \leq \Norm{\Dif^r M^\ttop}_\U \ \norm{v_1}_\U\cdots \norm{v_r}_\U\,.
\end{align*}
In addition, given $M_1: \U \subset \CC^m\to \CC^{n_1\times n_3}$ and $M_2: \U
\subset \CC^m\to \CC^{n_3\times n_2}$, we have 
\[
\norm{M_1 M_2}_\U \leq 
\norm{M_1}_\U \norm{M_2}_\U \, ,
\]
and 
\[
\norm{\Dif(M_1 M_2)}_\U \leq 
\norm{\Dif M_1}_\U \norm{M_2}_\U + \norm{M_1}_\U \norm{\Dif M_2}_\U  \, .
\]

In particular, if $f: \U\subset \CC^m \to \CC^{n_1}$, we may take $n_2= 1$ and
consider the matrix constructions just made. One recover bounds such as \[
	\norm{\Dif f\:V}_\U \leq \norm{\Dif f}_\U \norm{V}_\U,
\]
or 
\[
 \norm{(\Dif f\: V)^\ttop}_\U \leq \norm{(\Dif f)^\ttop}_\U \norm{V^\ttop}_\U, 
	\quad \norm{(\Dif f\: V)^\ttop}_\U \leq \norm{\Dif f^\ttop}_\U \norm{V}_{\U}, 
\]
where $V:\U\subset \CC^m \to\CC^{n_1\times n_3}$. Notice we obtain two possible upper bounds for $ \norm{(\Dif f\: V)^\ttop}_\U$.

\subsubsection*{Spaces of periodic real-analytic functions}
The particular case of real-analytic periodic functions deserves some
additional definitions and comments.  We denote by $\Anal(\TT^d_\rho)$ the
Banach space of holomorphic functions $u:\TT^d_\rho \to \CC$, that can be
continuously extended to $\bar\TT^d_\rho$, and such that
$u(\TT^d) \subset \RR$ (real-analytic), endowed with the norm
\[
\norm{u}_\rho = \norm{u}_{\TT^d_\rho}= \max_{|\im\theta|\leq\rho} |u(\theta)|\,.
\]
We also denote by $\Anal_{C^r}(\TT^d_\rho)$ the Banach space of holomorphic
functions $u:\TT^d_\rho \to \CC$ whose partial derivatives up to order $r$ can
be continuously extended to $\bar\TT^d_\rho$, and such that $u(\TT^d) \subset
\RR$,  endowed with the norm
\[
\norm{u}_{\rho,C^r} = \max_{k= 0,\dots, r}  \norm{\Dif^k u}_{\TT^d_\rho}.
\]

As usual in the analytic setting, we use {\em Cauchy estimates} to
control the derivatives of a function. 
Given $u \in \Anal(\TT^d_\rho)$, with $\rho>0$, then for any $0<\delta<\rho$
the partial derivative $\pd u/\pd {\theta_\ell}$ belongs to $\Anal(\TT^d_{\rho-\delta})$
and we have the estimates
\[
\Norm{
\frac{\pd u}{\pd \theta_{\ell}}}_{\rho-\delta} \leq \frac{1}{\delta}\norm{u}_\rho, 
\qquad
\Norm{\Dif u}_{\rho-\delta} \leq \frac{d}{\delta}\norm{u}_\rho, 
\qquad
\Norm{(\Dif u)^\ttop}_{\rho-\delta} \leq \frac{1}{\delta}\norm{u}_\rho.
\]
The above definitions and estimates extend naturally to matrix-valued maps, that is,
given  $M: \TT^d_\rho \to \CC^{n_1\times n_2}$, with components in $ \Anal(\TT^d_\rho)$, we have
\[
\norm{\Dif M}_{\rho-\delta} 
=   \max_{i=1,\ldots,n_1} \sum_{j= 1,\dots, n_2}  \Norm{\Dif M_{i,j}}_{\rho-\delta} 
\leq \frac{d}{\delta} \norm{M}_{\rho}.
\]
A direct consequence is that $\norm{\Dif M^\ttop}_{\rho-\delta} \leq
\frac{d}{\delta} \norm{M^\ttop}_{\rho} \leq  \frac{d\: n_1}{\delta}
\norm{M}_{\rho}$.

As it was mentioned in Section \ref{ssec:basic notation}, the operators $\Dif$
and $(\cdot)^\ttop$ do not commute. In particular,
given a real analytic vector function $w: \TT^d_\rho \to \CC^n\simeq \CC^{n\times 1}$, we have:
\[
	\norm{\Dif w}_{\rho-\delta} \leq \frac{d}{\delta} \norm{w}_\rho,\quad 
	\norm{\Dif w^\ttop}_{\rho-\delta} \leq \frac{d}{\delta} \norm{w^\ttop}_\rho \leq \frac{n d}{\delta} \norm{w}_\rho,\quad
	\norm{(\Dif w)^\ttop}_{\rho-\delta} \leq \frac{n}{\delta}\norm{w}_\rho.
\]

\subsubsection*{Small divisors equations and Diophantine vectors}
Another ingredient in KAM theory are the estimates of the solutions of the {\em
small divisors equations}. Given $\omega \in \RR^d$, ergodic, and a
real-analytic periodic function $v\in \Anal(\TT^d_\rho)$, we consider the
equation \begin{equation}\label{eq:calL}
\Loper   u = v- \aver{v},
\end{equation}
where $\Loper$ is defined in \eqref{def:Loper}.  Expanding in Fourier series,
the only zero-average solution $u= \Roper   v$ of \eqref{eq:calL} is be
\begin{equation}\label{eq:small:formal}
\Roper  v(\theta) = \sum_{k \in \ZZ^d \backslash \{0\} } \hat u_k \ee^{2\pi
\ii k \cdot  \theta}, \qquad \hat u_k = \frac{-\hat
v_k}{2\pi \ii \: k \cdot \omega},
\end{equation}
and all the other solutions of 
\eqref{eq:calL} are of the form $u= \hat u_0 + \Roper  v$ with $\hat u_0\in\RR$.

The convergence of the expansion \eqref{eq:small:formal} is implied by a
Diophantine condition on $\omega$.  Specifically, for given $\gamma >0$ and
$\tau \geq d-1$, we denote the set of Diophantine vectors 
\begin{equation*}\label{eq:def:Dioph}
\Dioph{\gamma}{\tau}{d} =
\left\{
\omega \in \RR^d \, : \,
\abs{k \cdot \omega} \geq  \frac{\gamma}{|k|_1^{\tau}}
\,, 
\forall k\in\ZZ^d\backslash\{0\}
\right\},
\end{equation*}
where $|k|_1 = \sum_{i= 1}^d |k_i|$, and $\omega\in\Dioph{\gamma}{\tau}{d}$.
Sharp estimates are provided by the following lemma \cite{FiguerasHL17},
in which we also include sharp estimates of partial derivatives of the
solution, in combination with Cauchy estimates.

\begin{lemma}[R\"ussmann estimates]\label{lem:Russmann}
Let $\omega \in \Dioph{\gamma}{\tau}{d}$ for some $\gamma>0$ and $\tau \geq
d-1$.  Then, for any $v \in \Anal(\TT^d_\rho)$, with $\rho>0$, there exists a
unique zero-average solution of $\Loper   u = v -\aver{v}$, denoted by
$u=\Roper  v$, such that $u \in \Anal(\TT^d_{\rho-\delta})$ for any $\delta\in
]0,\rho]$.  Moreover,
\begin{equation*}\label{eq:Russmann}
\norm{\Roper   v}_{\rho-\delta} \leq \frac{\CR(\delta)}{\gamma \delta^\tau}
\norm{v}_\rho\,,
\end{equation*}
where $\CR(\delta)$ depends on $\omega$ and $\delta$, and it is bounded from
above by a constant $\CRb$ than depends only on $d$ and $\tau$.  More
concretely, for any $m>0$, 
\begin{equation*}\label{def:CR}
\begin{split}
\CR(\delta) & := \sqrt{\gamma^2\delta^{2\tau}2^d
\sum_{k\in\ZZ^d\backslash\{0\}}\frac{e^{-4\pi|k|_1\delta}}{|k\cdot\omega|^2}}
\\
& \leq
\sqrt{\gamma^2\delta^{2\tau}2^d
\sum_{0<|k|_1\leq m}\frac{e^{-4\pi|k|_1\delta}}{|k\cdot\omega|^2}+
\frac{
2^{d+1-2\tau}\zeta(2, 2^{\tau})}{\pi^{2\tau}}\int_{4\pi\delta(m+1)}^\infty
u^{2\tau}e^{-u}du} \quad =: \CR(\delta,m)
\\
& \leq \sqrt{
2^{d+1-2\tau}\zeta(2, 2^{\tau})\pi^{-2\tau-2}\Gamma(2\tau+1)
}
\quad =:\CRb,
\end{split}
\end{equation*}
where $\zeta(a,b)= \sum_{j\geq 0} (b+j)^{-a}$ is the Hurtwitz zeta function. 
\end{lemma}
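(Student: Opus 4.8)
The formal solution and its uniqueness are essentially recorded already in \eqref{eq:small:formal}: matching Fourier coefficients in \eqref{eq:calL} forces $\hat u_k=-\hat v_k/(2\pi\ii\,k\cdot\omega)$ for $k\neq0$ (the Fourier symbol of $\Loper$ is $-2\pi\ii\,k\cdot\omega$), and the zero-average normalization fixes $\hat u_0=0$, so a zero-average solution is unique at the formal level. What remains for the lemma is (i) that this series actually defines an element of $\Anal(\TT^d_{\rho-\delta})$, and (ii) the quantitative estimate. Both follow once I control $\sum_{k\neq0}|\hat u_k|\,\ee^{2\pi|k|_1(\rho-\delta)}$: this majorizes $\norm{\Roper v}_{\rho-\delta}$, its finiteness gives absolute and uniform convergence on $\bar\TT^d_{\rho-\delta}$ (hence holomorphy, being a uniform limit of holomorphic partial sums), and $\overline{\hat u_k}=\hat u_{-k}$ (inherited from $v$) gives $u(\TT^d)\subset\RR$.

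So the crux is estimating $\sum_{k\neq0}|\hat u_k|\,\ee^{2\pi|k|_1(\rho-\delta)}=\tfrac{1}{2\pi}\sum_{k\neq0}\tfrac{|\hat v_k|}{|k\cdot\omega|}\ee^{2\pi|k|_1(\rho-\delta)}$ at a point with $|\im\theta|\le\rho-\delta$. The plain coefficient bound $|\hat v_k|\le\norm{v}_\rho\ee^{-2\pi|k|_1\rho}$ is too lossy to close a Cauchy--Schwarz argument directly (the $\ell^2$ sum of $|\hat v_k|^2\ee^{4\pi|k|_1\rho}$ diverges over all of $\ZZ^d$), so the plan is to split $\ZZ^d$ into the $2^d$ orthants $\cO_S=\{k:\,k_j\ge0\ \text{if}\ j\in S,\ k_j<0\ \text{if}\ j\notin S\}$, $S\subseteq\{1,\dots,d\}$, and on each $\cO_S$ apply Parseval to $v$ restricted to the real torus translated into the matching corner of the strip $\{|\im\theta|\le\rho\}$; there $-k\cdot\im\theta=|k|_1\rho$ exactly for $k\in\cO_S$, so $\sum_{k\in\cO_S}|\hat v_k|^2\ee^{4\pi|k|_1\rho}\le\norm{v}_\rho^2$. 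Splitting $\ee^{2\pi|k|_1(\rho-\delta)}=\ee^{2\pi|k|_1\rho}\cdot\ee^{-2\pi|k|_1\delta}$ and applying Cauchy--Schwarz on $\cO_S$ bounds the $\cO_S$-part by $\tfrac{1}{2\pi}\norm{v}_\rho\big(\sum_{k\in\cO_S}\ee^{-4\pi|k|_1\delta}|k\cdot\omega|^{-2}\big)^{1/2}$, and a final Cauchy--Schwarz over the $2^d$ orthants produces the factor $2^d$ and reassembles the series, giving $\norm{\Roper v}_{\rho-\delta}\le\tfrac{1}{2\pi}\norm{v}_\rho\big(2^d\sum_{k\neq0}\ee^{-4\pi|k|_1\delta}|k\cdot\omega|^{-2}\big)^{1/2}$. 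Thus the orthant/Parseval step yields simultaneously the factor $2^d$ and the factor $(2\pi)^{-2}$ hidden inside $\CR(\delta)$, and this is exactly the bound claimed.

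For the explicit and $\delta$-robust constants $\CR(\delta,m)$ and $\CRb$, the plan is to split the series at $|k|_1=m$, keep $\sum_{0<|k|_1\le m}$ as is (finite and computable once $\omega$ is fixed), and estimate the tail $\sum_{|k|_1>m}$. The essential point is \emph{not} to use the naive lattice-point count on $\{|k|_1=\ell\}$, which loses a factor $\ell^{d-1}$ and would make $\CR(\delta)$ blow up as $\delta\to0$, but R\"ussmann's near-resonance separation: applying the Diophantine inequality to $k-k'$ shows that the numbers $\{\pm k\cdot\omega:\,|k|_1\le R\}$ are $\gamma(2R)^{-\tau}$-separated on $\RR$, so ordering them and summing yields $\sum_{0<|k|_1\le R}|k\cdot\omega|^{-2}\le c\,\gamma^{-2}R^{2\tau}$ with a dimension-free constant $c$ (and a dyadic refinement of this counting produces the sharper constant built from $\zeta(2,2^\tau)$). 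Feeding this into the tail via Abel summation and comparing $\sum_\ell\ell^{2\tau}\ee^{-4\pi\ell\delta}$ with $\int_0^\infty x^{2\tau}\ee^{-4\pi\delta x}\,\dif x$ produces the incomplete-Gamma expression defining $\CR(\delta,m)$; letting the integral run from $0$ gives the $\delta$-independent $\CRb$ with its $\Gamma(2\tau+1)$.

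The routine part is the Banach-algebra and Cauchy-estimate bookkeeping. The one genuinely delicate step is the small-divisor counting above --- getting the separation estimate with explicit constants sharp enough that $\CR(\delta)$ stays bounded as $\delta\to0$ and matches the stated $\CRb$ --- together with careful tracking of the powers of $2\pi$ coming from the symbol of $\Loper$ so that the printed constants come out exactly.
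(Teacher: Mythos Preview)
Your sketch is correct and follows the classical R\"ussmann argument (orthant decomposition, Parseval on translated tori, Cauchy--Schwarz, and the small-divisor separation lemma). The paper itself does not actually prove this lemma: its proof consists solely of citations to R\"ussmann's original papers \cite{Russmann75,Russmann76a} for the uniform bound and to \cite{FiguerasHL17,FiguerasHL18} for the explicit $\delta$-dependent constants $\CR(\delta)$, $\CR(\delta,m)$ and $\CRb$, noting that the adaptation from the discrete to the continuous case is straightforward. So you have supplied what the paper outsources.

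One small remark: you say the factor $(2\pi)^{-2}$ is ``hidden inside $\CR(\delta)$'', but as stated in the lemma the definition of $\CR(\delta)$ does not contain it; your Cauchy--Schwarz argument actually produces $\tfrac{1}{2\pi}\,\CR(\delta)/(\gamma\delta^\tau)$, which is \emph{stronger} than the claimed bound by a factor $2\pi$. That extra $\pi^{-2}$ reappears only in the tail analysis leading to $\CRb$ (via the $(4\pi\delta)^{-2\tau-1}$ coming from the incomplete Gamma integral). This is harmless for the inequality, just a bookkeeping point.
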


\begin{proof}
There results follows from the classical results in \cite{Russmann75,Russmann76a}, 
where a uniform bound (independent of $\delta$) is obtained. We refer
to \cite{FiguerasHL17} for sharp non-uniform computer-assisted
estimates (in the discrete case) of the form $\CR=\CR(\delta)$,
which represent a substantial
advantage in order to apply the result to particular problems. Adapting these
estimates to the continuous case is straightforward. Also, we refer to \cite{FiguerasHL18}
for a numerical quantification of these estimates and for an analysis of
the different sources of overestimation.
\end{proof}

\begin{remark}
In applications, for a given $\delta\in ]0,\rho]$ one selects $m$ big enough so
that the integral term in $\CR(\delta,m)$ or $\CRd(\delta,m)$ is small compared
with the preceeding sum of terms up to order $m$.
\end{remark}

Along the proof, we  encounter situations in which we have to combine Cauchy
and R\"ussmann estimates. The following lemma gives sharp bites to perform such
combined bounds. 

\begin{corollary}[R\"ussmann-Cauchy estimates]\label{lem:Russmann-Cauchy}
Let $\omega \in \Dioph{\gamma}{\tau}{d}$
for some $\gamma>0$ and $\tau \geq d-1$.
Then, for any $v, w \in \Anal(\TT^d_\rho)$, with $\rho>0$, for any $\ell= 1,\dots, d$, 
$\delta\in]0,\rho]$ and $m>0$:
\begin{equation*}\label{eq:CRderivada}
\norm{\frac{\pd}{\pd\theta_\ell} (v\: \Roper w)}_{\rho-\delta} 
\leq \frac{\CR^1(\delta)}{\gamma \delta^{\tau+1}} \norm{v}_\rho \norm{w}_\rho,
\end{equation*}
where
\begin{equation}\label{def:CRderivada}
\begin{split}
\CRd(\delta)  & := \frac{(\tau+1)^{\tau+1}}{\tau^\tau}\ \CR\!\left(\frac{\tau}{\tau+1}\delta\right) 
\\ 
& \leq \frac{(\tau+1)^{\tau+1}}{\tau^\tau}\ \CR\!\left(\frac{\tau}{\tau+1}\delta, m \right) \quad =: 	\CRd(\delta, m)
\\
& \leq  \frac{(\tau+1)^{\tau+1}}{\tau^\tau}\ \CRb \quad =: \CRdb.
\end{split}
\end{equation}
\end{corollary}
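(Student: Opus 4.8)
The plan is to prove the estimate by splitting the total loss of analyticity width $\delta$ into two consecutive bites: a first bite $\delta_1$ used to solve the small-divisor equation, and a second bite $\delta_2=\delta-\delta_1$ used to take a derivative via a Cauchy estimate. Concretely, I would first apply Lemma~\ref{lem:Russmann} to get $\Roper w\in\Anal(\TT^d_{\rho-\delta_1})$ with $\norm{\Roper w}_{\rho-\delta_1}\leq \CR(\delta_1)\,(\gamma\,\delta_1^\tau)^{-1}\norm{w}_\rho$, then invoke the Banach-algebra property of the sup-norm on $\TT^d_{\rho-\delta_1}$ recalled in Subsection~\ref{ssec:analytic setting}, together with $\norm{v}_{\rho-\delta_1}\leq\norm{v}_\rho$, to bound the product $v\:\Roper w$ on that strip, and finally apply the Cauchy estimate $\norm{\pd u/\pd\theta_\ell}_{\rho-\delta_1-\delta_2}\leq\delta_2^{-1}\norm{u}_{\rho-\delta_1}$ with $u=v\:\Roper w$. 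Chaining these three bounds yields, for any admissible splitting, $\norm{\pd(v\:\Roper w)/\pd\theta_\ell}_{\rho-\delta}\leq \CR(\delta_1)\,(\gamma\,\delta_1^\tau\,\delta_2)^{-1}\norm{v}_\rho\norm{w}_\rho$.

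The second step is to choose the splitting optimally. Since $\CR$ has no closed form (it is estimated separately in Lemma~\ref{lem:Russmann}), I would optimize only the explicit polynomial factor, i.e. minimize $g(\delta_1)=\delta_1^{-\tau}(\delta-\delta_1)^{-1}$ over $\delta_1\in\,]0,\delta[$. The stationarity condition $g'=0$ reads $\tau(\delta-\delta_1)=\delta_1$, so the optimal choice is $\delta_1=\tfrac{\tau}{\tau+1}\delta$, $\delta_2=\tfrac{1}{\tau+1}\delta$, and a short computation gives $(\delta_1^\tau\,\delta_2)^{-1}=\tfrac{(\tau+1)^{\tau+1}}{\tau^\tau}\,\delta^{-\tau-1}$. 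Substituting back produces exactly the asserted bound with $\CRd(\delta)=\tfrac{(\tau+1)^{\tau+1}}{\tau^\tau}\,\CR\!\big(\tfrac{\tau}{\tau+1}\delta\big)$, which is the first line of \eqref{def:CRderivada}.

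Finally, the remaining two inequalities in \eqref{def:CRderivada}, namely $\CRd(\delta)\leq\CRd(\delta,m)\leq\CRdb$, are immediate: multiply the inequalities $\CR\!\big(\tfrac{\tau}{\tau+1}\delta\big)\leq\CR\!\big(\tfrac{\tau}{\tau+1}\delta,m\big)\leq\CRb$ supplied by Lemma~\ref{lem:Russmann} by the positive constant $\tfrac{(\tau+1)^{\tau+1}}{\tau^\tau}$. There is no genuine obstacle in this argument; the only points requiring a little care are that both bites be positive and that the second not exceed the available width $\rho-\delta_1$ (which holds automatically since $\delta\leq\rho$), and the observation that one should \emph{not} try to optimize the full expression $\CR(\delta_1)\,\delta_1^{-\tau}\,\delta_2^{-1}$ jointly: doing so would not yield a clean constant, whereas optimizing the polynomial part alone recovers the sharp exponent $\tau+1$ and the explicit prefactor quoted in the statement.
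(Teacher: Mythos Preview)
Your proposal is correct and follows essentially the same approach as the paper: apply the R\"ussmann estimate with bite $\hat\delta$ and the Cauchy estimate with bite $\delta-\hat\delta$, then optimize the splitting by maximizing $(\delta-\hat\delta)\hat\delta^\tau$, which yields $\hat\delta=\tfrac{\tau}{\tau+1}\delta$. Your write-up is in fact slightly more detailed than the paper's, since you make explicit the intermediate use of the Banach-algebra bound for the product $v\,\Roper w$.
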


\begin{proof}
The estimates for the derivatives of the solution $u$ follow from applying
Cauchy and R\"ussmann estimates with bites  $\delta-\hat\delta$ and
$\hat\delta$, respectively, and  choosing $\hat\delta$ to maximize
$(\delta-\hat\delta) {\hat\delta}^\tau$.  This happens for $\hat\delta=
\frac{\tau}{1+\tau}\delta$.
\end{proof}

\begin{remark}
If one applies R\"ussmann and Cauchy estimates with bites  $\delta/2$ to get
the upper bound \eqref{def:CRderivada}, then one obtains $\CR^1(\delta)=
2^{\tau+1} \CRb$. Notice the factor in \eqref{def:CRderivada} is 
\[
   \frac{(\tau+1)^{\tau+1}}{\tau^\tau} \leq 2^{\tau+1}.
\]
\end{remark}

Again, the above definitions, constructs and estimates extend naturally to
matrix-valued maps.

\subsection{The KAM theorem}\label{ssec:KAM theorem}

In this subsection, we present an a-posteriori KAM theorem for $d$-dimensional
quasi-periodic invariant tori in Hamiltonian systems with $n$
degrees-of-freedom that have $n-d$ additional first integrals in involution.
The hypotheses in Theorem \ref{thm:KAM} are tailored to be verified with a
finite amount of computations.

\def\smax{r}

\begin{theorem}[KAM theorem with first integrals]\label{thm:KAM}
Let  $(\sform,\gform,\J)$ be  a compatible triple on the open set
$\mani\subset\RR^{2n}$, where the symplectic form is exact: $\sform=\dif
\aform$.  Let $\H:\mani\to \RR$ be a Hamiltonian for which there exists a
moment map $\P:\mani\to \RR^{n-d}$ whose components and $\H$ are pairwise in
involution functionally independent functions, being $\Phi:\domPhi\subset
\RR^{n-d}\times \mani \to \mani$ the corresponding moment flow.  Let
$\omega\in \Dioph{\gamma}{\tau}{d}$ be a Diophantine vector, for some constants
$\gamma >0$ and $\tau \geq d-1$.  Let $K:\TT^d\to \mani$ be a parameterization.
We assume that the following hypotheses hold.
\begin{itemize}[leftmargin=*]
\item [$H_1$]
The geometric objects $\sform,\gform,\J,\aform$, the Hamiltonian $\H$, and the
moment map $\P$ can be analytically extended to an open complex set
$\cmani\subset \CC^{2n}$ covering $\mani$, and the moment flow  to an open
complex set $\cdomPhi\subset \CC^{n-d}\times \cmani$ covering $\domPhi$.
Moreover, there exist constants $\cteOmega$, $\cteG$, $\cteJ$, $\cteJT$,
$\cteDOmega$, $\cteDG$, $\cteDJ$, $\cteDJT$, $\cteXH$, $\cteDXH$, $\cteDXHT$,
$\cteDDXH$, $\cteTh$, $\cteDTh$, $\cteXp$, $\cteDXp$, $\cteXpT$, $\cteDXpT$,
and $\cteDPhi$ such that:
\begin{itemize}
\item the matrix representations $\Omega,G,J:  \cmani \rightarrow \CC^{2n \times 2n}$ of $\sform$,$\gform$,$\J$ satisfy: 
\begin{align*}
\norm{\Omega}_{\cmani} \leq \cteOmega, 
&&\norm{G}_{\cmani} \leq \cteG, 
&& \norm{J}_{\cmani} \leq \cteJ, 
&& \norm{J^{\top}}_{\cmani} \leq \cteJT,
\\
\norm{\Dif \Omega}_{\cmani} \leq \cteDOmega,  
&& \norm{\Dif G}_{\cmani} \leq \cteDG, 
&& \norm{\Dif J}_{\cmani} \leq \cteDJ,
&& \norm{\Dif J^\ttop}_{\cmani} \leq \cteDJT;
\end{align*}
\item the Hamiltonian vector field 
$\Xh: \cmani \rightarrow \CC^{2n}$ and its torsion $T_\H:\cmani \rightarrow \CC^{2n\times 2n}$, satisfy:
\begin{align*}
& \norm{\Xh}_{\cmani} \leq \cteXH, 
&& \norm{\Dif \Xh}_{\cmani} \leq \cteDXH,
&& \norm{(\Dif \Xh)^\ttop}_{\cmani} \leq \cteDXHT
& \norm{\Dif^2 \Xh}_{\cmani} \leq \cteDDXH \\
& \norm{T_\H}_{\cmani} \leq \cteTh,
&& \norm{\Dif T_\H}_{\cmani} \leq \cteDTh;
\end{align*}
\item the moment vector fields $\Xp: \cmani \rightarrow \CC^{2n \times (n-d)}$ 
and the moment flow $\Phi: \cdomPhi\to \cmani$, satisfy:
\begin{align*}
& \norm{X_p}_{\cmani} \leq \cteXp, && \norm{\Dif X_p}_{\cmani} \leq \cteDXp, && 
\\
& \norm{X_p^\ttop}_{\cmani} \leq \cteXpT, && \norm{\Dif X_p^\ttop}_{\cmani} \leq \cteDXpT &&\norm{\Dif_z\Phi}_{\cdomPhi} \leq \cteDPhi.
\end{align*}
\end{itemize}

\item [$H_2$] 
There are $\smax>0$, an  open subset $\cmani_0 \subset \cmani$, and {\em condition numbers}  
$\sigmaDK$, $\sigmaDKT$, 
 $\sigmaB$, $\sigmaN$, $\sigmaNT$, and $\sigmaTinv$
such that:
\begin{itemize}
\item $K\in (\Anal_{C^1}(\TT^d_\rho))^{2n}$, with $0<\rho<\smax$,
is an embedding with $K(\bar\TT^d_{\rho})\subset \cmani_0$, whose averaged
torsion $\aver{T}$ is invertible and, moreover: \[
\norm{\DK}_{\rho} < \sigmaDK, \quad
\norm{\DKT}_{\rho} < \sigmaDKT,\quad
 \norm{\B}_\rho < \sigmaB,
 \]
 \[
\norm{\N}_{\rho} < \sigmaN, \quad
 \norm{\NT}_{\rho} < \sigmaNT, \quad
 \abs{\aver{T}^{\text{-}1}} < \sigmaTinv;
\]
We define $\sigmaL= \sigmaDK + \cteXp$ and $\sigmaLT= \max\{ \sigmaDKT,\cteXpT\}$, 
so that 
\[
\norm{\L}_{\rho} < \sigmaL, \quad
 \norm{\LT}_{\rho} < \sigmaLT,
\]
\item $\cdomPhi_0 := \{ s\in \CC^{n-d}  \ | \ |s|< \smax\} \times \cmani_0 \subset \cdomPhi$.
\end{itemize}

\item [$H_3$] We are given positive {\em control constants}  $\fake<1$, $\fakeetaN$ and $\fakeE<1$.
\end{itemize}
Under the above hypotheses, for each $\rho_\infty\in ]0,\rho[$  and $\delta\in ]0,(\rho-\rho_\infty)/3[$,
there exists a constant $\mathfrak{C}$ depending on $\rho, \rho_\infty, \delta$ and the constants introduced above such that, if the error of invariance
\begin{equation}\label{eq:invE}
E=\Xh\comp K + \Loper\K, 
\end{equation}
satisfies
\begin{equation}\label{eq:KAM:HYP}
\frac{\CtheoE}{\gamma\delta^{\tau+1}} \wnormeta < 1,
\end{equation}
where 
\[
	\eta^L=  - N^\ttop \OK\: E,\quad \etaN  =  \phantom{-} L^\ttop \OK\: E,
\]
then there exists an invariant torus
$\torus_\infty = K_\infty(\TT^d)$ with frequency $\omega$, satisfying
$K_\infty \in \Anal(\TT^{d}_{\rho_\infty})^{2n}$ and
\begin{align}
& \dist(\K_\infty(\TT^d_{\rho_\infty}),\partial\cmani_0) >0, \label{eq:distKinf} \\
& \norm{\DK_\infty}_{\rho_\infty} < \sigmaDK, \label{eq:DKinf} \\
& \norm{\DKT_{\!\,\infty}}_{\rho_\infty} < \sigmaDKT, \label{eq:DKTinf} \\
& \norm{\B_\infty}_{\rho_\infty} < \sigmaB, \label{eq:Binf} \\
& \norm{\N_\infty}_{\rho_\infty} < \sigmaN, \label{eq:Ninf} \\
& \norm{\NT_{\!\,\infty}}_{\rho_\infty} < \sigmaNT, \label{eq:NTinf} \\
& \abs{\aver{T_\infty}^{\text{-}1}} < \sigmaTinv. \label{eq:iTinf}
\end{align}
Furthermore, the objects are close to the original ones: there exist constants
$\CtheoDeltaK$, $\CtheoDeltaL$, $\CtheoDeltaLT$, $\CtheoDeltaB$,
$\CtheoDeltaN$, $\CtheoDeltaNT$ and $\CtheoDeltaiT$ (like $\CtheoE$,
given explicitly throughout the proof and summarized in
Appendix~\ref{app:constants}) such that
\begin{align}
& \norm{\K_\infty-\K}_{\rho_\infty} \leq 
\frac{\CtheoDeltaK}{\gamma\delta^\tau} \wnormeta,  \label{eq:DeltaKinf} \\
& \norm{\DK_\infty-\DK}_{\rho_\infty} \leq 
\frac{\CtheoDeltaDK}{\gamma\delta^{\tau+1}} \wnormeta,  \label{eq:DeltaDKinf} \\
& \norm{\DKT_{\,\!\infty}-\DKT}_{\rho_\infty} \leq 
\frac{\CtheoDeltaDKT}{\gamma\delta^{\tau+1}} \wnormeta,  \label{eq:DeltaDKTinf} \\
& \norm{\B_\infty-\B}_{\rho_\infty} \leq  
\frac{\CtheoDeltaB}{\gamma\delta^{\tau+1}} \wnormeta, \label{eq:DeltaBinf} \\
& \norm{\N_\infty-\N}_{\rho_\infty} \leq 
\frac{\CtheoDeltaN}{\gamma\delta^{\tau+1}} \wnormeta,  \label{eq:DeltaNinf} \\
& \norm{\NT_{\!\,\infty}-\NT}_{\rho_\infty} \leq 
\frac{\CtheoDeltaNT}{\gamma\delta^{\tau+1}} \wnormeta,  \label{eq:DeltaNTinf} \\
& \abs{\aver{T_\infty}^{\text{-}1}-\aver{\T}^{\text{-}1} } \leq  
\frac{\CtheoDeltaiT}{\gamma\delta^{\tau+1}} \wnormeta. \label{eq:DeltaiTinf}
\end{align}
\end{theorem}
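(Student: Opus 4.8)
\emph{The scheme.} The plan is to run a Newton-like iteration in the analytic scale, obtained by iterating the modified step of Subsection~\ref{ssec:quasi Newton}. Starting from $K_0=K$ on $\TT^d_{\rho_0}$, $\rho_0=\rho$, one produces $K_1,K_2,\dots$ on nested strips $\TT^d_{\rho_0}\supset\TT^d_{\rho_1}\supset\cdots$ with $\rho_m\searrow\rho_\infty$ as follows: from $K_m$ form the error $E_m=\Xh\comp K_m+\Loper K_m$, the adapted frame $P_m=(L_m\ N_m)$ attached to $K_m$, the projected error $\eta_m=(\eta^L_m,\eta^N_m)=-\Omega_0^{\text{-}1}P_m^\ttop\,(\Omega\comp K_m)\,E_m$ as in \eqref{def:eta}, and the torsion $T_m$; then solve the triangular cohomological system \eqref{eq:stepL}--\eqref{eq:stepN} for $\xi_m=(\xi^L_m,\xi^N_m)$ by the R\"ussmann operator, which is possible because $\aver{\eta^N_m}=0$ (a consequence of the exactness of $\sform$, as recalled in Subsection~\ref{ssec:quasi Newton}) and $\aver{T_m}$ is invertible; and, writing $\xi^L_m=(\xi^L_{m,\DK},\xi^L_{m,X_p})$, update $K_{m+1}=\Phi_{\xi^L_{m,X_p}}\comp(K_m+N_m\xi^N_m)\comp(\id+\xi^L_{m,\DK})$, cf.\ \eqref{eq:new-aproximation-new}. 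At step $m$ the analyticity width is narrowed by $3\delta_m$ (one bite for the R\"ussmann/Cauchy estimates of $\xi_m$, one for the inner reparameterization $\id+\xi^L_{m,\DK}$, one for the Cauchy estimates needed to re-estimate the new frame and its defects); the $\delta_m$ form a decreasing sequence with $\sum_m 3\delta_m\le\rho-\rho_\infty$ and $\delta_0$ comparable to $\delta$, the near-optimal profile, by the Introduction's digression, being geometric of ratio close to $\tfrac12$. The size of the $m$-th approximation is $\varepsilon_m:=\wnorm{\eta_m}{\rho_m}{\delta_m}$, and $\CtheoE$ will absorb the finitely many scheme-dependent factors.

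\emph{One step.} Assume $K_m$ satisfies, on $\TT^d_{\rho_m}$, the analogue of $H_2$ together with $\tfrac{\CtheoE}{\gamma\delta_m^{\tau+1}}\varepsilon_m<1$. R\"ussmann's estimates (Lemma~\ref{lem:Russmann}) and the R\"ussmann--Cauchy estimates (Corollary~\ref{lem:Russmann-Cauchy}) give, up to explicit constants depending only on the data, $\norm{\xi^N_m}\lesssim\varepsilon_m$, $\norm{\Loper\xi^L_m}\lesssim\varepsilon_m$, $\norm{\xi^L_m}\lesssim\tfrac{1}{\gamma\delta_m^{\tau}}\varepsilon_m$ and $\norm{\Dif\xi_m}\lesssim\tfrac{1}{\gamma\delta_m^{\tau+1}}\varepsilon_m$ (the weight $\tfrac1{\gamma\delta_m^\tau}$ built into $\varepsilon_m$ being exactly what renders $\norm{\Roper\eta^N_m}$ an $O(\varepsilon_m)$ term); smallness of $\tfrac1{\gamma\delta_m^{\tau+1}}\varepsilon_m$ then guarantees that $\id+\xi^L_{m,\DK}$ is a holomorphic diffeomorphism of $\TT^d_{\rho_{m+1}}$ into $\TT^d_{\rho_m-2\delta_m}$, that $\norm{\xi^L_{m,X_p}}<\smax$, and that $K_m+N_m\xi^N_m$ stays inside $\cmani_0$, so the composition defining $K_{m+1}$ makes sense and $K_{m+1}(\TT^d_{\rho_{m+1}})\subset\cmani_0$. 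The core is the estimate of $E_{m+1}=\Xh\comp K_{m+1}+\Loper K_{m+1}$. Expanding $K_{m+1}$ and using the defining equations \eqref{eq:stepL}--\eqref{eq:stepN} together with the \emph{approximate} reducibility of Proposition~\ref{prop:reducibility} — $\Xoper{L_m}$ and $\Xoper{N_m}-L_mT_m$ controlled by $E_m,\Dif E_m$, and $E_m+P_m\eta_m$ controlled by the small, zero-average isotropy defect $\Omega_{L_m}=L_m^\ttop(\Omega\comp K_m)L_m$ of $K_m$ — all first-order terms cancel and $E_{m+1}$ becomes a sum of quadratic terms. The structural point, and the source of the improved threshold, is that since $\xi^L_{m,\DK}$ is inserted through a composition and $\xi^L_{m,X_p}$ through the moment flow, the quadratic Hamiltonian remainder only involves the normal correction $N_m\xi^N_m$ — not the R\"ussmann-heavy $L_m\xi^L_m$ — so the $O(\norm{L_m\xi^L_m}^2)$ term of the additive scheme of \cite{HaroL19} never appears; the surviving dominant pieces of $E_{m+1}$ are of the form $(\Dif K_{m+1})\bigl[\Loper\xi^L_{m,\DK}\comp(\id+\xi^L_{m,\DK})-\Loper\xi^L_{m,\DK}\bigr]$ and its $X_p$-analogue, which lie in the range of the new tangent frame $L_{m+1}$.

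\emph{Recursion and persistence.} When one forms $\eta_{m+1}=-\Omega_0^{\text{-}1}P_{m+1}^\ttop\,(\Omega\comp K_{m+1})\,E_{m+1}$, the tangential dominant pieces of $E_{m+1}$ contribute to the \emph{normal} component $\eta^N_{m+1}=L_{m+1}^\ttop\,(\Omega\comp K_{m+1})\,E_{m+1}$ only through the isotropy defect $\Omega_{L_{m+1}}$, which is again small and of zero average; hence $\eta^N_{m+1}$ obeys a sharper bound than $\eta^L_{m+1}$, and, paired with the asymmetric weighting in $\wnormeta$, this closes a quadratic recursion of the schematic form $\varepsilon_{m+1}\le\tfrac{\mathfrak{C}_\ast}{\gamma\,\delta_m^{\tau+1}}\,\varepsilon_m^2$ with $\mathfrak{C}_\ast$ built from $H_1$, the condition numbers of $H_2$, and $\CRb,\CRdb$. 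With geometric bites of ratio $\tfrac12$ the substitution $u_m:=\tfrac{\mathfrak{C}_\ast 2^{\tau+1}}{\gamma\delta_m^{\tau+1}}\varepsilon_m$ turns it into $u_{m+1}\le u_m^2$; choosing $\CtheoE\ge\mathfrak{C}_\ast 2^{\tau+1}$ in \eqref{eq:KAM:HYP} yields $u_0<1$, hence $u_m\le u_0^{2^m}\to0$ and $\varepsilon_m\to0$ doubly exponentially. In parallel, using the matrix-inversion lemma of Appendix~\ref{app:invertibility} for $B_{m+1}=(G_{L_{m+1}})^{\text{-}1}$ and for $\aver{T_{m+1}}^{\text{-}1}$, one checks that $\norm{\Dif K_{m+1}-\Dif K_m}$, $\norm{B_{m+1}-B_m}$, $\norm{N_{m+1}-N_m}$, $|\aver{T_{m+1}}^{\text{-}1}-\aver{T_m}^{\text{-}1}|$ and their transposes are bounded by a constant times $\tfrac1{\gamma\delta_m^{\tau+1}}\varepsilon_m$, hence summable with sum dominated by the $m=0$ term; asking a little more of the initial smallness than bare convergence requires keeps all the condition numbers of $H_2$ strictly satisfied along the whole sequence, so the step is legitimately iterable for every $m$.

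\emph{Limit, and main difficulty.} From $\norm{K_{m+1}-K_m}_{\rho_{m+1}}\lesssim\tfrac1{\gamma\delta_m^{\tau}}\varepsilon_m$ and the doubly exponential decay of $\varepsilon_m$, the sequence $(K_m)$ is Cauchy in $\Anal(\TT^d_{\rho_\infty})^{2n}$; its limit $K_\infty$ has $E_\infty=0$, i.e.\ satisfies \eqref{eq:inv:fv}, and \eqref{eq:distKinf} holds. Passing to the limit in the persistence bounds gives \eqref{eq:DKinf}--\eqref{eq:iTinf}, and summing the one-step change bounds — the geometric decay making each series comparable to its first term — gives \eqref{eq:DeltaKinf}--\eqref{eq:DeltaiTinf} with explicit values for $\CtheoDeltaK,\CtheoDeltaDK,\CtheoDeltaDKT,\CtheoDeltaB,\CtheoDeltaN,\CtheoDeltaNT,\CtheoDeltaiT$. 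The bulk of the work, and the genuinely delicate part, is the one-step estimate of $E_{m+1}$: the exact cancellation pattern and, above all, the careful bookkeeping of which contributions are tangential and which are normal, since it is precisely this split, combined with the weighted norm $\wnormeta$, that makes the quadratic recursion close with the loss $\gamma^{-2}\rho^{-2\tau-1}\varepsilon$ rather than $\gamma^{-4}\rho^{-4\tau}\varepsilon$; the remaining estimates, though lengthy, are routine.
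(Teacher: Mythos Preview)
Your sketch is correct and follows the same route as the paper: an iterative lemma (one modified quasi-Newton step yielding quadratic decay of the weighted error $\varepsilon_m$) followed by the standard convergence argument with geometrically shrinking bites $\delta_m$. One clarification worth making: the first-order terms do \emph{not} all cancel in $E_{m+1}$ itself—the tangential piece $D_z\Phi\cdot \bar L\comp(\id+\xi^L_{\DK})\cdot\Loper\xi^L$ in the expression for $\En$ survives and is only $O(\varepsilon_m)$—but, exactly as you go on to say, it lies in the range of the new tangent frame and therefore contributes to $\eta^N_{m+1}$ only through the isotropy defect $\Omega_{\Li}$; it is this projection step, not a cancellation inside $E_{m+1}$, that produces the crucial asymmetry $\|\eta^N_{m+1}\|\lesssim\delta_m^{-1}\varepsilon_m^2$ versus $\|\eta^L_{m+1}\|\lesssim(\gamma\delta_m^{\tau+1})^{-1}\varepsilon_m^2$.
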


\begin{remark}
Notice that if $\norm{\L}_{\rho} < \sigmaL, \norm{\LT}_{\rho} < \sigmaLT$ and
$\norm{B}_\rho < \sigmaB$, then $\norm{\N}_{\rho} < \cteJ \sigmaL \sigmaB$ and
$\norm{\NT}_{\rho} <   \sigmaB\sigmaLT\cteJT$ and, hence, if one takes
$\sigmaN\geq \cteJ \sigmaL \sigmaB$ and $\sigmaNT\geq \sigmaB\sigmaLT\cteJT$
the conditions for $\norm{\N}_{\rho}$ and $\norm{\NT}_{\rho}$ follow
inmediatelly. Our point is to provide maximum flexibility of the results to be
applied to specific problems. Similar controls could be also do for other
objects, such as $X_p\comp K$, $(X_p\comp K)^\ttop$, $\GL$ or $T$, leading to
similar formulae. 
\end{remark}

\begin{remark}
If $d=n$ then there are no additional first integrals
and we recover the classical KAM theorem for Lagrangian tori.
The corresponding estimates follow by taking zero the constants
$\cteXp=0$,
$\cteXpT=0$,
$\cteDXp=0$,
$\cteDXpT=0$,
$\cteDPhi= 1$.
\end{remark}

\begin{remark}
In the \emph{canonical case} we have $\Omega=\Omega_0$, $G = I_{2n}$,
and $J=\Omega_0$ and, hence, 
$\cteOmega=1$,
$\cteDOmega=0$, 
$\cteG=1$,
$\cteDG=0$, 
$\cteJ=1$,
$\cteDJ=0$,
$\cteJT=1$, and
$\cteDJT=0$.
\end{remark}

\begin{remark}
Theorem~\ref{thm:KAM} produces a $d$-dimensional isotropic invariant torus with
frequency $\omega\in \Dioph{\gamma}{\tau}{d}$, that generates an
$(n-d)$-parameters family of $d$-dimensional isotropic invariant tori with such
a frequency, foliating an $n$-dimensional invariant cylinder.  With the aid of
discounted Hamiltonians, one can produce also $n$-dimensional invariant
cylinders, see Lemma~\ref{lem:reduction lemma} or, if the moment map $p$
induces a Hamiltonian torus action, one can produce $n$-dimensional invariant
tori, see Remark~\ref{rem:Hamiltonian torus action}. These tori have
frequencies $(\omega,\omegap)\in \Dioph{\gamma}{\tau}{d}\times  \RR^{n-d}$,
thus one obtains analytic families of Lagrangian invariant tori.  
\end{remark}

\begin{remark}\label{rem:unicity}
The invariant $d$-dimensional tori are locally unique, meaning that if there is
another $d$-dimensional invariant torus with the same frequency nearby, then
both generate the same invariant cylinder. More specifically, the corresponding
parameterizations $K$ and $K'$, say, are related by 
\[
	K'(\theta)= \Phi_\beta(K(\theta+\alpha)),
\] 
for suitable $\alpha\in\RR^d$, $\beta\in\RR^{n-d}$ small.  As mentioned in
Remark~\ref{rem:uniqueness}, both indeterminacies (the phase $\alpha$ and the
displacement $\beta$) could be fixed by adding $n$ extra scalar equations to
the invariance equation.
\end{remark}

\begin{remark}
Theorem~\ref{thm:KAM} gives the convergence to a parameterization of an
invariant torus defined in a complex strip of size $\rho_\infty$ from  a
parameterization of an approximately invariant torus defined in a complex strip
of size $\rho$, through a sequence of approximations (given by a Newton-like
method) whose complex strips sizes are determined by the initial bite $3\delta$
(in the proof, the bites are given by a geometric sequence). In practical
situations, these are parameters that can be adjusted appropriately.
Heuristically, see Remark~\ref{rem:optimal strips}, a good choice is $\delta=
(\rho-\rho_\infty)/6$. Also, if one is not interested in controlling the domain
of analyticity of the invariant torus, can take $\rho_\infty= 0$. Since the
conditions on the initial parameterization are given by strict inequalities,
and the final constants depend continuously on all constants in the hypothesis
(including the sizes), then it follows that for a small enough final strip size
the conditions hold.
\end{remark}

\section{Proof of the KAM theorem}\label{sec:proof KAM}

In this section we present a fully detailed proof of Theorem \ref{thm:KAM}.
Hence, from now on we assume the setting and hyphoteses of Theorem
\ref{thm:KAM}. The proof consists in demonstrating the convergence of the
(modified) quasi-Newton method outlined in Subsection~\ref{ssec:quasi Newton}.
In Subsection~\ref{ssec:some lemmas} we present some estimates regarding the
control of some geometric and dynamical properties for an approximately
invariant torus.  In Subsection~\ref{ssec:iterative lemma} we produce
quantitative estimates for the objects obtained when performing one iteration
of the procedure. Finally, in Subsection~\ref{ssec:convergence} we discuss the
convergence of the (modified) quasi-Newton method.

\subsection{Some lemmas to control approximate geometric properties}
\label{ssec:some lemmas}

Here we present some estimates regarding the control of some geometric and
dynamical properties for an approximately invariant torus, including
approximate symplecticity of the corresponding frame, the control of the total
error of invariance by its tangent and normal projections, and the approximate
reducibility of the linearized dynamics.  We collect all constants appearing in
the bounds in Appendix~\ref{app:constants}, Table~\ref{tab:some lemmas}.

\subsubsection{Approximate symplecticity of the adapted frame}

We prove here that  the adapted frame $P:\bar\TT^d_\rho\to \CC^{2n\times2n}$
attached to the torus $\cK$ parameterized by $\K:\bar\TT^d_\rho\to \cmani_0$,
defined in \eqref{def:P}, induces an approximately symplectic vector bundle
isomorphism and, in particular, that the bundle $\cL$ framed by
$L:\bar\TT^d_\rho\to \CC^{2n\times n}$ given in \eqref{def:L} is approximately
Lagrangian.  See e.g. \cite{GonzalezJLV05,GonzalezHL13} for similar
considerations. An extra ingredient is that, following \cite{Villanueva17}, the
errors in the symplecticity of $P$ and Lagragianity of $L$ are controlled by
the normal component of the invariance error, $\etaN$.

The symplectic form on the bundle  $\cL$, is represented by the  the
anti-symmetric matrix-valued map $\Omega_L:\bar\TT^d_\rho \to \CC^{n \times
n}$, which is 
\begin{equation}
\label{def:OmegaL}
\Elag = 
\begin{pmatrix}
\OmegaDK & 
 (\Dif(p\comp K))^\ttop
\\
- \Dif(p\comp K) & O_{n-d}
\end{pmatrix},
\end{equation}
where we use the pairwise involution of the first integrals,  
\[
(X_p\comp K)^\ttop
\OK\; X_p\comp K	= O_{n-d},
\]
and the corresponding Hamiltonian vector fields to get
\[
(X_p\comp K)^\ttop
\OK\: \DK = -(\Dif p)\comp K \: \DK = 
- \Dif(p\comp K).
\]

\begin{lemma}\label{lem:approximate lagrangianity}
Let $\OmegaL:\bar\TT^d_\rho \to \CC^{n \times n}$ be the matrix-valued map
given by \eqref{def:OmegaL}.  

Then, $\aver{\OmegaL} = O_{n}$ and, in $\TT^d_\rho$, 
\begin{equation*}\label{eq:OmegaL}
\OmegaL = 
\begin{pmatrix} 
\Roper(\Dif\etaNDK - (\Dif\etaNDK)^\ttop)
&
-\Roper (\Dif \etaNXp)^\ttop
\\
\Roper \Dif \etaNXp
& 
O_{n-d}
\end{pmatrix}.
\end{equation*}
Moreover, for any $\delta\in ]0,\rho]$: 
\begin{equation}\label{CNOmegaL}
\norm{\OmegaL}_{\rho-\delta} \leq \frac{\CNOmegaL}{\gamma \delta^{\tau+1}}\norm{\etaN}_\rho
\end{equation}
and
\begin{equation}\label{CNOmegaN}
\norm{\OmegaN}_{\rho-\delta} \leq  \frac{\CNOmegaN}{\gamma\delta^{\tau+1}} \norm{\etaN}_\rho.
\end{equation}
\end{lemma}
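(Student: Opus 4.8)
The proof splits into three tasks: the vanishing of $\aver{\OmegaL}$, the explicit formula, and the bounds \eqref{CNOmegaL}--\eqref{CNOmegaN}. For the average, the block $O_{n-d}$ is trivial, and $\mp\Dif(p\comp K)$ (the lower-left and, transposed, the upper-right block) is a transpose of a Jacobian of a periodic map, hence of zero average. For the upper-left block $\OmegaDK=(\DK)^\ttop\OK\,\DK$ I would use exactness: since $\Omega=(\Dif a)^\ttop-\Dif a$ and the chain rule gives $(\Dif a\comp K)\,\DK=\Dif(a\comp K)$, a short manipulation yields $(\OmegaDK)_{ij}=\partial_i\alpha_j-\partial_j\alpha_i$, where $\alpha_j:=(a\comp K)^\ttop\partial_j K$ are the components of the pull-back $1$-form $K^*\aform$; this is the exact $2$-form $K^*\sform=\dif(K^*\aform)$ on $\TT^d$, so $\aver{\OmegaDK}=O_d$.

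Next, the explicit formula. For the off-diagonal blocks I start from $\etaNXp=\Dif(p\comp K)\,\omega=-\Loper(p\comp K)$, already obtained in the decomposition of $\etaN$. Since $\Loper$ of a periodic map has zero average, $\aver{\etaNXp}=O_{n-d}$, whence $p\comp K=-\Roper\etaNXp+\aver{p\comp K}$; differentiating and using that $\Dif$ and $\Roper$ commute (both act diagonally on Fourier coefficients), $\Dif(p\comp K)=-\Roper\,\Dif\etaNXp$, which is exactly the block $\Roper\,\Dif\etaNXp$ (with the sign prescribed in \eqref{def:OmegaL}) and its transpose. For the upper-left block I would show that $\OmegaDK$ satisfies a cohomological equation driven by $\etaNDK$. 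Since $\Loper$ commutes with $\partial_\ell$, one has $\Loper(\OmegaDK)_{ij}=\partial_i(\Loper\alpha_j)-\partial_j(\Loper\alpha_i)$, and I compute $\Loper\alpha_j$ by the product rule together with $\Loper K=E-\Xh\comp K$. The contribution of $-\Xh\comp K$ to $\Loper\alpha_j$ is, by Cartan's magic formula $\Lie{\Xh}\aform=\dif(i_{\Xh}\aform-\H)$, the $j$-th component of the exact $1$-form $-\dif((a^\ttop\Xh-\H)\comp K)$, hence cancels upon antisymmetrizing in $(i,j)$; the contribution of $E$ equals $\partial_j((a\comp K)^\ttop E)-(\etaNDK)_j$ once one substitutes $\Omega=(\Dif a)^\ttop-\Dif a$, and again its exact part cancels. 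Therefore $\Loper\OmegaDK=(\Dif\etaNDK)^\ttop-\Dif\etaNDK$ (an overall sign being immaterial below), and since $\aver{\OmegaDK}=O_d$ and $\Dif\etaNDK$ has zero average, inverting $\Loper$ gives $\OmegaDK=\Roper(\Dif\etaNDK-(\Dif\etaNDK)^\ttop)$, completing the formula (which, like the estimates, is understood on strips $\TT^d_{\rho-\delta}$).

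For the bounds, each block of $\OmegaL$ is $\Roper$ applied to a first derivative (antisymmetrized, for the diagonal block) of a block of $\etaN$; using $\Roper\,\Dif=\Dif\,\Roper$ and the R\"ussmann--Cauchy estimate of Corollary~\ref{lem:Russmann-Cauchy} (with the constant function in the role of $v$), together with $\norm{\etaNDK}_\rho,\norm{\etaNXp}_\rho\leq\norm{\etaN}_\rho$ and the elementary transposition and matrix-norm bookkeeping of Subsection~\ref{ssec:analytic setting}, one gets $\norm{\OmegaL}_{\rho-\delta}\leq\frac{\CNOmegaL}{\gamma\delta^{\tau+1}}\norm{\etaN}_\rho$ with $\CNOmegaL$ an explicit constant built from $\CRdb$, $d$ and $n$ (collected in Appendix~\ref{app:constants}). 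For \eqref{CNOmegaN} I would use the compatibility identity $J^\ttop\Omega J=\Omega$ evaluated at $K$, which gives $\OmegaN=\N^\ttop\OK\,\N=\B^\ttop\L^\ttop\OK\,\L\,\B=\B\,\OmegaL\,\B$ since $\B=G_L^{\text{-}1}$ is symmetric; hence $\norm{\OmegaN}_{\rho-\delta}\leq\sigmaB^2\,\norm{\OmegaL}_{\rho-\delta}$, which is \eqref{CNOmegaN} with $\CNOmegaN$ proportional to $\sigmaB^2\,\CNOmegaL$.

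The heart of the argument, and the only genuinely delicate point, is the diagonal-block identity $\Loper\OmegaDK=\pm(\Dif\etaNDK-(\Dif\etaNDK)^\ttop)$: it is precisely where exactness of $\sform$, closedness, and the Cartan/magic-formula cancellation of the $\Xh$-terms all have to be used simultaneously (and where it is easy to botch a sign or lose a term). Everything else -- the average, the off-diagonal blocks, and the passage to estimates -- is routine bookkeeping.
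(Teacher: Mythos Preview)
Your proof is correct and follows the same overall scaffolding as the paper's: establish $\aver{\OmegaL}=O_n$, derive $\Loper\OmegaL$ explicitly, invert via $\Roper$, and apply the R\"ussmann--Cauchy estimates of Corollary~\ref{lem:Russmann-Cauchy}; your treatment of the off-diagonal blocks via $\etaNXp=-\Loper(p\comp K)$ and of $\OmegaN=B\,\OmegaL\,B$ (whence $\CNOmegaN=(\sigmaB)^2\CNOmegaL$) matches the paper exactly.

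The one substantive difference is in the derivation of the core identity $\Loper\OmegaDK=\Dif\etaNDK-(\Dif\etaNDK)^\ttop$. The paper works directly at the level of $\Omega$: it first obtains \eqref{eq:LieOmegaDK} via the product rule and the identity \eqref{eq:prop2killSK}, then matches it against $\Dif\etaNDK-(\Dif\etaNDK)^\ttop$ through an explicit component-by-component computation (indices $r,s,t$ running over $1,\dots,2n$) that uses only the \emph{closedness} of $\sform$ (the cyclic identity for $\partial\Omega_{r,s}/\partial z_t$). You instead work at the level of the primitive: write $(\OmegaDK)_{ij}=\partial_i\alpha_j-\partial_j\alpha_i$ with $\alpha_j$ the components of $K^*\aform$, compute $\Loper\alpha_j$, and invoke Cartan's magic formula $\Lie{\Xh}\aform=\dif(i_{\Xh}\aform-\H)$ to see that the $\Xh$-contribution is exact and cancels upon antisymmetrization. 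Your route is more geometric and explains conceptually why the cancellation occurs, but it uses exactness of $\sform$ in the derivation itself; the paper's route is a straight index calculation that needs only closedness at this step (exactness enters solely for $\aver{\OmegaDK}=O_d$). Both are valid and lead to the same identity; your acknowledged sign slip is indeed harmless for the stated formula and the bounds.
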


\begin{proof}
The fact that  $\aver{\OmegaDK} = O_d$ follows directly from the exact
symplectic structure, since $K^*\sform={\rm d}( K^{*}\aform)$. The fact that
$\aver{(\Dif(p\comp K))}= O_{(n-d)\times d}$ is straightforward. 
Hence $\aver{\OmegaL} = O_{n}$  follows.

We claim that 
\begin{equation}\label{eq:LieOmegaL}
\Loper   \OmegaL = 
\begin{pmatrix}
\Dif\etaNDK - (\Dif\etaNDK)^\ttop & -(\Dif \etaNXp)^\ttop 
\\
\Dif \etaNXp & O_{n-d}
\end{pmatrix}.
\end{equation}
To do so, we first compute the action of $\Loper  $ on $\OmegaDK$ and, using
\eqref{eq:invE} and \eqref{eq:prop2killSK} we get
\begin{equation}
\label{eq:LieOmegaDK}
\Loper   \OmegaDK= 
(\Dif E)^\ttop \OK\, \DK
+
(\DK)^\ttop (\Dif \Omega) \comp K [E]\: \DK \\
 + (\DK)^\ttop \OK\: \Dif E.
\end{equation}
See \cite{HaroL19}. Inspired by \cite{Villanueva17}, we obtain
formula  
\begin{equation}
\label{eq:LieOmegaDK:etaNDK} 
\Loper \OmegaDK= \Dif\etaNDK - (\Dif\etaNDK)^\ttop
\end{equation}
from differentiating the projected error 
\[
	\etaNDK= (\DK)^\ttop \OK \: E, 
\]
and  symplecticity of $\sform$.  To do so using the matrix components, first
notice that \eqref{eq:LieOmegaDK} reads \[
\left(\Loper   \OmegaDK\right)_{i,j} 
   = \sum_{r,s} 
	\left(     \frac{\partial E_r}{\partial \theta_i} \Omega_{r,s}\comp K \frac{\partial K_s}{\partial\theta_j}
	         +\frac{\partial K_r}{\partial \theta_i} \Omega_{r,s}\comp K\frac{\partial E_s}{\partial\theta_j}
	\right) 
		+  
	\sum_{r,s,t} \frac{\partial K_r}{\partial \theta_i} \frac{\partial \Omega_{r,s}}{\partial z_t} \comp K E_t 
	\frac{\partial K_s}{\partial\theta_j}
\]
and also 
\[
\left(\Dif\etaNDK\right)_{i,j} 
=  \sum_{r,s} 
	\left(     \frac{\partial^2 K_r}{\partial\theta_i \partial\theta_j} \Omega_{r,s}\comp K E_s 
	        + \frac{\partial K_r}{\partial \theta_i} \Omega_{r,s}\comp K \frac{\partial E_s}{\partial\theta_j}
	\right)
	   +
	\sum_{r,s,t} \frac{\partial K_r}{\partial \theta_i} \frac{\partial \Omega_{r,s}}{\partial z_t} \comp K 
	\frac{\partial K_t}{\partial\theta_j}  
	E_s,
\]
for $i, j= 1,\dots, d$, where the indices $r,s,t$ run in $1,\dots, 2n$. Hence, 
\[
\begin{split}
\left(\Dif\etaNDK\right)_{i,j} - \left(\Dif\etaNDK\right)_{j,i}
&  = \phantom{+} \sum_{r,s} 
	\left(  \frac{\partial K_r}{\partial \theta_i} \Omega_{r,s}\comp K \frac{\partial E_s}{\partial\theta_j} -
	         \frac{\partial K_r}{\partial \theta_j} \Omega_{r,s}\comp K \frac{\partial E_s}{\partial\theta_i}
        \right)
\\
& \phantom{=} 
        + 
        \sum_{r,s,t} 
        \left( \frac{\partial K_r}{\partial \theta_i} \frac{\partial \Omega_{r,s}}{\partial z_t} \comp K 
	\frac{\partial K_t}{\partial\theta_j}  E_s -
	\frac{\partial K_r}{\partial \theta_j} \frac{\partial \Omega_{r,s}}{\partial z_t} \comp K 
	\frac{\partial K_t}{\partial\theta_i}  
	E_s
        \right)
\\
& = \phantom{+} \sum_{r,s} 
	\left(  \frac{\partial K_r}{\partial \theta_i} \Omega_{r,s}\comp K \frac{\partial E_s}{\partial\theta_j} +
	         \frac{\partial K_s}{\partial \theta_j} \Omega_{r,s}\comp K \frac{\partial E_r}{\partial\theta_i}
        \right)
\\
& \phantom{=} 
        + 
        \sum_{r,s,t} 
         \left( - \frac{\partial K_t}{\partial \theta_i} \frac{\partial \Omega_{s,t}}{\partial z_r} \comp K 
	\frac{\partial K_r}{\partial\theta_j}  E_s -
	\frac{\partial K_r}{\partial \theta_j} \frac{\partial \Omega_{r,s}}{\partial z_t} \comp K 
	\frac{\partial K_t}{\partial\theta_i}  
	E_s
        \right)
\\
& = \phantom{+} \sum_{r,s} 
	\left(  \frac{\partial K_r}{\partial \theta_i} \Omega_{r,s}\comp K \frac{\partial E_s}{\partial\theta_j} +
	         \frac{\partial K_s}{\partial \theta_j} \Omega_{r,s}\comp K \frac{\partial E_r}{\partial\theta_i}
        \right)
\\
& \phantom{=} 
        + 
        \sum_{r,s,t} 
         \frac{\partial K_t}{\partial \theta_i} \frac{\partial \Omega_{t,r}}{\partial z_s} \comp K 
	\frac{\partial K_r}{\partial\theta_j}  E_s 
\\
& = \left(\Loper   \OmegaDK\right)_{i,j}, 
\end{split}
\]
from where we obtain \eqref{eq:LieOmegaDK:etaNDK}.

Notice also that, since 
\[
\etaNXp= (X_p\comp K)^\ttop
\OK\: E = -(\Dif p)\comp K\: (X_h + \Loper K)=  -\Loper (p\comp K),
\] 
then 
\[
\Loper ((X_p\comp K)^\ttop\: \Omega\comp K\: \DK) = - \Loper \Dif(p\comp K)= \Dif \etaNXp,
\]
thus completing the proof of formula \eqref{eq:LieOmegaL}.

Finally, the quantitive estimate \eqref{CNOmegaL} follows from R\"ussmann
and Cauchy estimates from Corollary~\ref{lem:Russmann} applied
to the components of \eqref{eq:LieOmegaL}. In particular, \[
\norm{\OmegaDK}_{\rho-\delta} \leq \frac{2(d-1) \CRd(\delta)}{\gamma\delta^{\tau+1}} \norm{\etaN}_\rho.
\]
\end{proof}

With the previous lemma we control the approximate symplecticity of the frame $P$.

\begin{lemma}\label{lem:approximate symplecticity}
The matrix-valued map $P: \bar\TT^d_\rho \to \CC^{2n \times 2n}$, defined in \eqref{def:P}, is approximately symplectic, i.e.,
the simplecticity error map
\begin{equation*}\label{eq:Esym}
\Esym := P^\ttop \OK\: P-\Omega_0\,,
\qquad
\Omega_0 = 
\begin{pmatrix}
O_n & -I_n \\
I_n & O_n
\end{pmatrix}\,,
\end{equation*}
is small in the sense that, for any $\delta\in ]0,\rho]$:
\begin{equation}\label{Csym}
\norm{\Esym}_{\rho-\delta} \leq \frac{\Csym}{\gamma \delta^{\tau+1}}
\norm{\etaN}_\rho.
\end{equation}
\end{lemma}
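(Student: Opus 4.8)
The plan is to reduce everything to the block structure of $P^\ttop\OK\:P$ and to the estimates already established in Lemma~\ref{lem:approximate lagrangianity}. Writing $P=\begin{pmatrix} L & N\end{pmatrix}$ as in \eqref{def:P}, I would first expand
\[
P^\ttop \OK\: P
= \begin{pmatrix} L^\ttop \OK\: L & L^\ttop \OK\: N \\ N^\ttop \OK\: L & N^\ttop \OK\: N \end{pmatrix}
= \begin{pmatrix} \OmegaL & L^\ttop \OK\: N \\ N^\ttop \OK\: L & \OmegaN \end{pmatrix},
\]
so that the two diagonal blocks are exactly the pullback forms $\OmegaL$ and $\OmegaN$, whose $\norm{\cdot}_{\rho-\delta}$-bounds in terms of $\norm{\etaN}_\rho$ are precisely \eqref{CNOmegaL} and \eqref{CNOmegaN}.

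Next I would show that the two off-diagonal blocks are $\mp I_n$ \emph{exactly}, with no error term. This uses only the algebraic relations of the compatible triple: from $\Omega=GJ$ (equivalently $-\Omega J=G$) one gets $\OK\: J\comp K=-G\comp K$, hence, using $N=J\comp K\: L\: B$ with $B=\invGL=(L^\ttop G\comp K\: L)^{-1}$,
\[
L^\ttop \OK\: N = -L^\ttop G\comp K\: L\: B = -\GL B = -I_n,
\]
and, transposing and using $\Omega^\ttop=-\Omega$, also $N^\ttop \OK\: L = I_n$. Consequently $\Esym = P^\ttop \OK\: P-\Omega_0$ is block-diagonal with diagonal blocks $\OmegaL$ and $\OmegaN$:
\[
\Esym = \begin{pmatrix} \OmegaL & O_n \\ O_n & \OmegaN \end{pmatrix}.
\]

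Finally, since the row sums defining the matrix norm of a block-diagonal matrix split between the two diagonal blocks, $\norm{\Esym}_{\rho-\delta}=\max\{\norm{\OmegaL}_{\rho-\delta},\norm{\OmegaN}_{\rho-\delta}\}$, and \eqref{Csym} follows from Lemma~\ref{lem:approximate lagrangianity} with $\Csym=\max\{\CNOmegaL,\CNOmegaN\}$. I do not anticipate any real obstacle here: the only point requiring care is verifying that the off-diagonal blocks carry no error, which is a short computation with the compatibility identities and the definitions of $N$ and $B$; the whole analytic content of the bound is inherited from the preceding lemma.
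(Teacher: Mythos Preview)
Your proposal is correct and follows exactly the paper's approach: compute the block form of $P^\ttop\OK\:P$, observe that the off-diagonal blocks are $\mp I_n$ so that $\Esym=\mathrm{diag}(\OmegaL,\OmegaN)$, and invoke the bounds \eqref{CNOmegaL}--\eqref{CNOmegaN} from Lemma~\ref{lem:approximate lagrangianity} with $\Csym=\max\{\CNOmegaL,\CNOmegaN\}$. The paper's proof is actually terser than yours---it simply displays the block identity and says the result follows---so your explicit verification of $L^\ttop\OK\:N=-I_n$ via $\Omega J=-G$ and $B=\invGL$ spells out a step the paper leaves to the reader.
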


\begin{proof}
To characterize the error in the symplectic character of the frame, we
compute
\begin{equation*}\label{eq:newEsym}
\Esym
= 
\begin{pmatrix}
L^\ttop 
\OK\: L & 
L^\ttop 
\OK\: N + I_n
\\
N^\ttop
\OK\: L - I_n & 
N^\ttop
\OK\: N
\end{pmatrix} 
= 
\begin{pmatrix}
\OmegaL  & O_n \\
O_n &  \OmegaN 
\end{pmatrix},
\end{equation*}
from which the result follows immediately. 
\end{proof}

\subsubsection{Relations between the invariance error and their tangent and normal components}

From the definitions of $\etaL$ and $\etaN$, we obtain easily their bounds controlled by $E$:
\begin{equation*}\label{bound:etaL-E}
\norm{\etaL}_\rho \leq \CNT \cteOmega \norm{E}_\rho,
\end{equation*}
\begin{equation*}\label{bound:etaN-E}
\norm{\etaN}_\rho \leq \CLT \cteOmega \norm{E}_\rho.
\end{equation*}
In other to control $E$ in terms of $\etaL$ and $\etaN$ we have to assume the invertibility of the frame $P$, which is a consequence of the approximate symplecticity, that is controlled in a narrower strip. We obtain the following lemma.

\begin{lemma}\label{lem:errors}
Assume that
\begin{equation}\label{eq:fake cond}
	\frac{\Csym}{\gamma \delta^{\tau+1}} \norm{\etaN}_\rho  < \fake <1.
\end{equation}
Then, for any $\delta\in ]0,\rho]$: 
\begin{equation}\label{bound:E-eta}
       \norm{E}_{\rho-\delta} \leq \CLE \norm{\etaL}_{\rho} + \CNE \norm{\etaN}_{\rho} \:\leq \CE  \wnormeta, 
       \end{equation}
\begin{equation}\label{bound:ET-eta}
       \norm{E^\ttop}_{\rho-\delta} \leq \CLET \norm{\etaL}_{\rho} + \CNET \norm{\etaN}_{\rho} \:\leq \CET  \wnormeta, 
       \end{equation}
and 
\begin{equation}\label{bound:DE-eta}
     \norm{\Dif E}_{\rho-\delta} \leq   \frac{\CLDE}{\delta} \norm{\etaL}_{\rho} + \frac{\CNDE}{\delta} \norm{\etaN}_{\rho} 
     \:\leq \frac{\CDE}{\delta}\wnormeta,
   \end{equation}
\begin{equation}\label{bound:DET-eta}
     \norm{(\Dif E)^\ttop}_{\rho-\delta} \leq   \frac{\CLDET}{\delta} \norm{\etaL}_{\rho} + \frac{\CNDET}{\delta} \norm{\etaN}_{\rho} 
     \:\leq  \frac{\CDET}{\delta}\wnormeta.
   \end{equation}
\end{lemma}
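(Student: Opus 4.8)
The plan is to invert the linear relation between the full invariance error $E$ and its tangent/normal projections. From the definitions $\etaL = -\N^\ttop\OK\, E$ and $\etaN = \L^\ttop\OK\, E$ one has, on $\TT^d_\rho$, the identity $P^\ttop\OK\, E = -\Omega_0\,\eta$. First I would invert $P^\ttop\OK\, P = \Omega_0 + \Esym$: by Lemma~\ref{lem:approximate symplecticity} and hypothesis \eqref{eq:fake cond}, and since $|\Omega_0^{\text{-}1}| = 1$, one has $\norm{\Omega_0^{\text{-}1}\Esym}_{\rho-\delta} \le \norm{\Esym}_{\rho-\delta} \le \frac{\Csym}{\gamma\delta^{\tau+1}}\norm{\etaN}_\rho < \fake < 1$, so $\Omega_0+\Esym = \Omega_0(I_{2n}+\Omega_0^{\text{-}1}\Esym)$ is pointwise invertible on $\TT^d_{\rho-\delta}$ with $\norm{(P^\ttop\OK\, P)^{\text{-}1}}_{\rho-\delta} \le (1-\fake)^{\text{-}1}$ by the Neumann series (Appendix~\ref{app:invertibility}). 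Taking determinants, $P$ and $\OK$ are then themselves invertible on $\TT^d_{\rho-\delta}$, which justifies $P(P^\ttop\OK\, P)^{\text{-}1}P^\ttop\OK = I_{2n}$ and hence the closed formula
\[
E = -\,P\,(\Omega_0+\Esym)^{\text{-}1}\,\Omega_0\,\eta \qquad\text{on }\TT^d_{\rho-\delta}.
\]

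Next I would isolate the leading term: since $(\Omega_0+\Esym)^{\text{-}1}\Omega_0 = I_{2n} - (\Omega_0+\Esym)^{\text{-}1}\Esym$,
\[
E = -\,\L\,\etaL - \N\,\etaN + P\,(\Omega_0+\Esym)^{\text{-}1}\,\Esym\,\eta .
\]
The first two terms are controlled by $\norm{\L}_\rho < \sigmaL$ and $\norm{\N}_\rho < \sigmaN$. For the remainder, recall from the proof of Lemma~\ref{lem:approximate symplecticity} that $\Esym = \mathrm{diag}(\OmegaL,\OmegaN)$, so $\Esym\,\eta = (\OmegaL\,\etaL,\,\OmegaN\,\etaN)$; using Lemma~\ref{lem:approximate lagrangianity} to bound $\norm{\OmegaL}_{\rho-\delta}$ and $\norm{\OmegaN}_{\rho-\delta}$ by $\tfrac{\CNOmegaL}{\gamma\delta^{\tau+1}}\norm{\etaN}_\rho$ and $\tfrac{\CNOmegaN}{\gamma\delta^{\tau+1}}\norm{\etaN}_\rho$, and invoking \eqref{eq:fake cond} once more to reabsorb the factor $\tfrac{1}{\gamma\delta^{\tau+1}}\norm{\etaN}_\rho < \fake/\Csym$, the remainder is again linear in $(\etaL,\etaN)$. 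Collecting constants gives $\norm{E}_{\rho-\delta} \le \CLE\norm{\etaL}_\rho + \CNE\norm{\etaN}_\rho$, and then $\norm{\etaL}_\rho \le \wnormeta$ and $\norm{\etaN}_\rho \le \gamma\delta^\tau\wnormeta$ (from the definition of $\wnormeta$, with $\gamma\delta^\tau$ bounded by the data) yield the form $\le \CE\wnormeta$ of \eqref{bound:E-eta}. Transposing the same formula and using $\norm{\L^\ttop}_\rho < \sigmaLT$, $\norm{\N^\ttop}_\rho < \sigmaNT$ and the transpose-norm product inequalities of Subsection~\ref{ssec:basic notation} gives \eqref{bound:ET-eta}.

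For \eqref{bound:DE-eta} and \eqref{bound:DET-eta} I would go through Cauchy estimates. Since $E = \Xh\comp\K + \Loper\K$ is holomorphic on all of $\TT^d_\rho$, it is enough to have a $C^0$-bound for $E$ and $E^\ttop$ on a strip slightly wider than $\TT^d_{\rho-\delta}$; and the argument above applies with $\delta_1$ in place of $\delta$ for any $\delta_1\in\,]0,\delta]$ for which Lemma~\ref{lem:approximate symplecticity} still gives $\norm{\Esym}_{\rho-\delta_1} < 1$, e.g. $\delta_1 = \delta\,\fake^{1/(\tau+1)}$ thanks to \eqref{eq:fake cond}. Cauchy estimates over the remaining bite $\delta - \delta_1$ then furnish the factor $1/\delta$, with the $\fake$-dependent losses absorbed into $\CLDE,\CNDE,\CDE$ and $\CLDET,\CNDET,\CDET$.

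The only real difficulty is bookkeeping stemming from the fact that $P$ is merely \emph{approximately} symplectic: one must keep the Neumann inverse of $\Omega_0+\Esym$ controlled on the right strip, check that the corrections quadratic in $\etaN$ are genuinely reabsorbed into linear-in-$\eta$ bounds via \eqref{eq:fake cond}, and split the analyticity bite for the Cauchy step compatibly with \eqref{eq:fake cond}. Nothing here is conceptually hard, but this is exactly where the sharp explicit values of $\CLE,\CNE,\CE,\dots,\CDET$ recorded in Appendix~\ref{app:constants} get pinned down.
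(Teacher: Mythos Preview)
Your strategy for \eqref{bound:E-eta} and \eqref{bound:ET-eta} is the same as the paper's: invert $P^\ttop\OK\,P=\Omega_0+\Esym$ on $\TT^d_{\rho-\delta}$ and read off $E$ in terms of $\eta$. The only difference is bookkeeping: the paper exploits the block-diagonal form $\Esym=\mathrm{diag}(\OmegaL,\OmegaN)$ to compute the $2\times 2$ block inverse explicitly, obtaining
\[
E=-(L+N\OmegaL)(I_n+\OmegaN\OmegaL)^{-1}\etaL-(N-L\OmegaN)(I_n+\OmegaL\OmegaN)^{-1}\etaN,
\]
so the Neumann factor is $(1-\fake^2)^{-1}$ rather than your $(1-\fake)^{-1}$, and the constants $\CLE=\tfrac{\sigmaL+\sigmaN\fake}{1-\fake^2}$, $\CNE=\tfrac{\sigmaN+\sigmaL\fake}{1-\fake^2}$ of Table~\ref{tab:some lemmas} drop out directly. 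Your perturbative decomposition $E=-L\etaL-N\etaN+\text{remainder}$ is correct but yields different (slightly coarser) constants.

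For \eqref{bound:DE-eta} and \eqref{bound:DET-eta} the paper takes a genuinely different route, and this is worth noting because your Cauchy-with-split-bite argument, while valid, does \emph{not} produce the constants recorded in Appendix~\ref{app:constants}. The paper differentiates the identity $\Omega_0^{-1}P^\ttop\OK\,E=-\eta$ on $\TT^d_\rho$ to get
\[
\Dif E=-P(I_{2n}+\Omega_0^{-1}\Esym)^{-1}\bigl(\Dif\eta+\Dif(\Omega_0^{-1}P^\ttop\OK)\,E\bigr),
\]
and then bounds $\Dif\eta$ and $\Dif(P^\ttop\OK)$ by a single Cauchy bite from $\rho$ to $\rho-\delta$. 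This avoids splitting $\delta$ entirely and gives the clean factors $\CLDE=d\,(1+\cteOmega(\CNT\CLE+\CLT\CNE))\CLE$, etc. Your approach would instead pick up a factor $(\delta-\delta_1)^{-1}=\delta^{-1}(1-\fake^{1/(\tau+1)})^{-1}$ from Cauchy, together with $E$-constants at strip $\rho-\delta_1$ where the symplecticity error is only $<1$ (not $<\fake$), so the $(1-\fake^2)^{-1}$ factors degenerate. The result is a legitimate bound of the stated shape, but the explicit constants are strictly worse and do not coincide with those in the tables.
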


\begin{proof}
The hipothesis implies that $\norm{\OmegaL}_{\rho-\delta}< \fake <1$ and $\norm{\OmegaN}_{\rho-\delta}< \fake <1$, 
so the matrices $(I_n+\OmegaN \OmegaL)$ and $(I_n+\OmegaL \OmegaN)$
are invertible and
\[
\norm{(I_n+\OmegaN\OmegaL)^{\text{-}1} }_{\rho-\delta} < \frac{1}{1-\fake^2}, \quad
\norm{(I_n+\OmegaL\OmegaN)^{\text{-}1} }_{\rho-\delta} < \frac{1}{1-\fake^2}.
\]
Then, 
\[
I_{2n} + \Omega_0^{\text{-}1} \Esym = \begin{pmatrix} I_n & \OmegaN \\ -\OmegaL & I_n \end{pmatrix}
\]
is invertible, and 
\[
(I_{2n} + \Omega_0^{\text{-}1} \Esym)^{\text{-}1} = 
\begin{pmatrix} 
I_n & -\OmegaN \\ \OmegaL & I_n
\end{pmatrix}
\:
\begin{pmatrix} 
(I_n+\OmegaN\OmegaL)^{\text{-}1} & O_n \\ O_n & (I_n+\OmegaL\OmegaN)^{\text{-}1}
\end{pmatrix}.
\]
Notice that, since $I_{2n} + \Omega_0^{\text{-}1} \Esym= \Omega_0^{\text{-}1} P^\ttop \OK\: P$, then both $P$ and $P^\ttop$ are invertible in $\bar\TT_{\rho-\delta}$.

From the definition \eqref{def:eta} of $\eta$
we obtain
\[
	E= - P\: (I_{2n} + \Omega_0^{\text{-}1} \Esym)^{\text{-}1} \: \eta,\quad  E^\ttop= - \eta^\ttop \: (I_{2n} +  \Esym \Omega_0^{\text{-}1} )^{\text{-}1} \: P^\ttop,
\]
from where we could obtain easily  bounds for $\norm{E}_{\rho-\delta}$ and 
$\norm{E^\ttop}_{\rho-\delta}$, but it is  better to keep track the dependences with respect to  $\norm{\etaL}_{\rho}$ and $\norm{\etaN}_{\rho}$ separately. 
Since 
\[
E= -(L+N\OmegaL) (I_n+\OmegaN\OmegaL)^{\text{-}1} \etaL 
-(N-L\OmegaN) (I_n+\OmegaL\OmegaN)^{\text{-}1} \etaN
\]
and 
\[
E^\ttop = -(\etaL)^\ttop (I_n+\OmegaL\OmegaN)^{\text{-}1} (L^\ttop-\OmegaL N^\ttop) 
- (\etaN)^\ttop (I_n+\OmegaN\OmegaL)^{\text{-}1} (N^\ttop+\OmegaN L^\ttop),
\]
the bounds \eqref{bound:E-eta}  and \eqref{bound:ET-eta} follow.

We want to avoid using extra Cauchy estimates for $\Dif E$ and $(\Dif E)^\ttop$, and then loose more analyticity strip. To do so,  first, since 
\[
\Dif \eta= -\Dif(\Omega_0^{\text{-}1} \: P^\ttop \: \OK\:) E - \Omega_0^{\text{-}1} \: P^\ttop \: \OK\: \Dif E
\]
then
\[
\begin{split}
	\Dif E
         & = - P \: (I_{2n} + \Omega_0^{\text{-}1} \Esym)^{\text{-}1} \left(\Dif \eta + \Dif(\Omega_0^{\text{-}1} \: P^\ttop \: \OK\:) E \right) 
\end{split}
\]
and 
\[
\begin{split}
(\Dif E)^\ttop 
& = 
- \left((\Dif \eta)^\ttop + (\Dif(\Omega_0^{\text{-}1} \: P^\ttop \: \OK\:) E)^\ttop \right) (I_{2n} +  \Esym \Omega_0^{\text{-}1})^{\text{-}1}  \: P^\ttop
\end{split},
\]
from where the bounds \eqref{bound:DE-eta} and \eqref{bound:DET-eta} follow.
\end{proof}

\subsubsection{Control of the action of the Lie operator}

Here we control the action of the operator $\Loper$ on $\K$, $\L$, $\LT$, $\GL$,  $\B$, and $\N$, avoiding the dependence of the estimates on $\omega$.

\begin{lemma}\label{lem:lie control}
Assume the condition 
\begin{equation}\label{eq:new fake cond}
	\frac{\max\{1,\Csym\}}{\delta} \wnormeta < \fake <1,
\end{equation}
that includes the condition \eqref{eq:fake cond} in Lemma~\ref{lem:errors}.
Then, for any $\delta\in ]0,\rho]$:
\begin{align}
&\norm{\Loper  K}_{\rho-\delta} \leq  \CLieK, \label{CLieK}  
\\
& \norm{\Loper  L}_{\rho-\delta} \leq  \CLieL, \label{CLieL}
\\
& \norm{\Loper  L^\ttop}_{\rho-\delta} \leq  \CLieLT, \label{CLieLT}
\\
& \norm{\Loper   G_L}_{\rho-\delta} \leq \CLieGL,\label{CLieGL}
\\
&\norm{\Loper B }_{\rho-\delta} \leq \CLieB, \label{CLieB}
\\
&\norm{\Loper  N}_{\rho-\delta} \leq  \CLieN. \label{CLieN}
\end{align}
\end{lemma}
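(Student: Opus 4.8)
The guiding principle is that $\Loper$ is a derivation which can be transferred onto the data: from the definition \eqref{eq:invE} of the invariance error we have the exact identity $\Loper\K = E - \Xh\comp\K$, and $\Loper$ commutes both with the $\theta$-derivative $\Dif$ and with transposition (it acts componentwise), and satisfies the chain rule $\Loper(f\comp\K)=\Dif f\comp\K\,[\Loper\K]$. Each object in the list is therefore handled by writing $\Loper$ as a Leibniz rule over its definition and substituting $\Loper\K=E-\Xh\comp\K$. The point is that $E$, $\Dif E$, $E^\ttop$ and $(\Dif E)^\ttop$ are \emph{already} controlled on $\TT^d_{\rho-\delta}$ by Lemma~\ref{lem:errors} in terms of $\wnormeta$ --- and, crucially, the derivative bounds \eqref{bound:DE-eta}, \eqref{bound:DET-eta} carry no \emph{extra} Cauchy loss, so the single bite $\delta$ already appearing there suffices --- while every other factor ($\Xh$, $\Dif\Xh$, $\Dif J$, $\Dif G$, $\DK$, $\B$, $\ldots$) is bounded by the constants of $H_1$--$H_2$. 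Since \eqref{eq:new fake cond} gives $\wnormeta/\delta<\fake<1$ and contains the hypothesis \eqref{eq:fake cond} of Lemma~\ref{lem:errors}, every term proportional to $\wnormeta$ or to $\wnormeta/\delta$ is absorbed into a genuine constant, so Lemma~\ref{lem:errors} is applicable and the resulting $\CLieK,\dots,\CLieN$ depend only on $\rho$, $\fake$ and the constants of $H_1$--$H_2$.

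Concretely I would proceed in the order the estimates are listed. Estimate \eqref{CLieK} is immediate: $\norm{\Loper\K}_{\rho-\delta}=\norm{E-\Xh\comp\K}_{\rho-\delta}\le\norm E_{\rho-\delta}+\cteXH$, and $\norm E_{\rho-\delta}$ is bounded by \eqref{bound:E-eta}. For \eqref{CLieL} and \eqref{CLieLT} I use $\L=\begin{pmatrix}\DK & \Xp\comp\K\end{pmatrix}$ from \eqref{def:L}: on the first block $\Loper\DK=\Dif(\Loper\K)=\Dif E-\Dif\Xh\comp\K\,\DK$ (and its transpose for $\LT$), bounded via \eqref{bound:DE-eta} resp.\ \eqref{bound:DET-eta} together with $H_1$--$H_2$; on the second block $\Loper(\Xp\comp\K)=\Dif\Xp\comp\K\,[\Loper\K]=\Dif\Xp\comp\K\,[E]-\Dif\Xh\comp\K\,\Xp\comp\K$, where I have used the involution identity $\Dif\Xp[\Xh]=\Dif\Xh\,\Xp$ to rewrite $\Dif\Xp\comp\K\,[\Xh\comp\K]$, and then bound via \eqref{bound:E-eta} and $H_1$. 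Then \eqref{CLieGL} follows from the Leibniz expansion
$\Loper\GL=(\Loper\LT)\,G\comp\K\,\L+\LT\,\big(\Dif G\comp\K\,[\Loper\K]\big)\,\L+\LT\,G\comp\K\,(\Loper\L)$, using the bounds just obtained, \eqref{bound:E-eta} and $H_1$--$H_2$. For \eqref{CLieB}, differentiating $\GL\,\B=I_n$ gives $\Loper\B=-\B\,(\Loper\GL)\,\B$, so $\norm{\Loper\B}_{\rho-\delta}\le\sigmaB^2\,\CLieGL$ (here $\norm{\B}_{\rho-\delta}\le\sigmaB$ by analytic continuation from $H_2$). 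Finally, for \eqref{CLieN} I expand $\Loper\N=\big(\Dif J\comp\K\,[\Loper\K]\big)\,\L\,\B+J\comp\K\,(\Loper\L)\,\B+J\comp\K\,\L\,(\Loper\B)$ and insert the previous estimates and $H_1$--$H_2$. The explicit expressions for the constants are collected in Appendix~\ref{app:constants}.

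There is no deep obstruction; the lemma is essentially bookkeeping. The only points that require care are (i) keeping the $\Dif$/$(\cdot)^\ttop$ non-commutation straight when passing from $\Loper\L$ to $\Loper\LT$, so that \eqref{bound:DET-eta} (rather than \eqref{bound:DE-eta}) is used and the transposed product factors are estimated by $\sigmaDKT$, $\cteDXHT$, $\cteXpT$, $\cteDXpT$; and (ii) checking that, after inserting the smallness hypothesis \eqref{eq:new fake cond}, all the $\wnormeta$- and $\wnormeta/\delta$-contributions collapse into constants independent of $\wnormeta$ and $\delta$. The fact that no further reduction of the analyticity strip is needed --- all six bounds hold already on $\TT^d_{\rho-\delta}$ --- is exactly what makes the sharper $\gamma^{-2}\rho^{-2\tau-1}\varepsilon$ scaling work, and it hinges entirely on the Cauchy-free form of \eqref{bound:DE-eta}--\eqref{bound:DET-eta} established in Lemma~\ref{lem:errors}.
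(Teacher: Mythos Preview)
Your proposal is correct and follows essentially the same approach as the paper: use $\Loper K=E-\Xh\comp K$, expand each subsequent object by the Leibniz/chain rule (invoking the involution identity $\Dif\Xp[\Xh]=\Dif\Xh\,\Xp$ for the $\Xp$-block of $\Loper L$), and absorb the $E$-dependent terms via Lemma~\ref{lem:errors} and hypothesis~\eqref{eq:new fake cond}. The points you flag as requiring care --- the $\Dif$/$(\cdot)^\ttop$ non-commutation for $\Loper\LT$ and the collapse of $\wnormeta$-terms into constants --- are exactly the ones the paper handles, and your treatment matches.
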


\begin{proof}
Estimate \eqref{CLieK} follows from the identity 
$\Loper  K=E  - X_{\H}\comp K$, 
the bound \eqref{bound:E-eta} and hypothesis \eqref{eq:new fake cond}.

Now, we consider the objects
$\Loper  L$ and $\Loper L^\ttop$, given by
\begin{equation*}
\begin{split}
\Loper   L & = 
\begin{pmatrix}
\Loper \DK & \Loper(X_p\comp K)
\end{pmatrix} 
= 
\begin{pmatrix}
 \Dif E -
(\Dif \Xh)\comp K \: \DK 
& 
(\Dif X_p)\comp K \: [\Loper  K]
\end{pmatrix}
\\
&= 
\begin{pmatrix}
 \Dif E -
(\Dif \Xh)\comp K \: \DK 
& 
(\Dif X_p)\comp K \: [E-\Xh\comp K]
\end{pmatrix}
\\
& = 
\begin{pmatrix}
\Dif E & (\Dif X_p)\comp K \: [E]
\end{pmatrix}
- 
(\Dif \Xh)\comp K \: \L
\end{split}
\end{equation*}
and 
\begin{equation*}
\Loper   L^\ttop = 
\begin{pmatrix}
 (\Dif E)^\ttop \\
((\Dif \Xp)\comp K\: [E])^\ttop
\end{pmatrix} -
((\Dif \Xh)\comp K\: \L)^\ttop
\end{equation*}
from where \eqref{CLieL} and \eqref{CLieLT} follow. 

Bound \eqref{CLieGL} follows from 
\begin{equation*}\label{eq:LieGL}
\Loper  G_L =  
\Loper  L^\ttop G\comp K\: L + 
L^\ttop (\Dif G)\comp K [\Loper K] \: L 
+ L^\ttop G\comp K\: \Loper  L.
\end{equation*}
Then, from the identity $G_L\: B= I_n$, we obtain
\[
\Loper  \B=  -\B\: \Loper G_L\: \B,
\]
from where we get the estimate \eqref{CLieB}.
Notice that $G_L$ and $B$ are symmetric, so we obtain the same bounds for their transposes. 

Finally, we consider the object $\Loper N$, 
given by
\begin{equation*}\label{eq:LieN:expanded}
\Loper  N= 
(\Dif J)\comp K [\Loper K] \: L\: \B 
+
J\comp K\: \Loper L \: \B + 
J\comp K\: L \: \Loper \B, 
\end{equation*}
and then we get \eqref{CLieN}.
\end{proof}

\subsubsection{Approximate reducibility}

A crucial step in the proof Theorem~\ref{thm:KAM}  is solving
the linearized equation arising from the application of the Newton
method. This is based on the (approximate) reduction of
such linear system into a block triangular form. This is the content of the following lemma.

\begin{lemma}\label{lem:approximate reducibility}
Assume the condition \eqref{eq:new fake cond} in Lemma~\ref{lem:lie control}.
Then, the linearized dynamics $\Dif \Xh \comp K:\bar \TT^d_\rho \to \CC^{2n \times 2n}$ is approximately reducible via the frame $P:\bar \TT^d_\rho \to \CC^{2n \times 2n}$ defined in \eqref{def:P}, to the block-triangular matrix-valued map 
$\Lambda: \bar \TT^d_\rho \to \CC^{2n \times 2n}$  defined in \eqref{def:Lambda}, 
where $T: \bar \TT^d_\rho \to \CC^{n \times n}$, defined in \eqref{def:T2}, is the torsion, for which
\begin{equation}\label{CT}
\norm{T}_{\rho} \leq \CT.
\end{equation}
Specifically, the {\em reducibility error map} $\Ered:\bar\TT^d_\rho\to \CC^{2n\times 2n}$ 
defined as
\begin{equation}\label{eq:Ered}
\Ered  :=
\Omega_0^{\text{-}1} P^\ttop \OK\: \Xoper{P}
 - \Lambda = \begin{pmatrix} 
\EredLL & \EredLN \\
\EredNL & \EredNN
\end{pmatrix}
\end{equation}
satifies, for any $\delta\in ]0,\rho]$:
\begin{equation}\label{CEredLL}
\norm{\EredLL}_{\rho-\delta} 
\leq \frac{\CLEredLL}{\delta} \norm{\etaL}_\rho +
\frac{\CNEredLL}{\delta} \norm{\etaN}_\rho, \phantom{\leq \frac{\CEredLN}{\delta}  \, \wnormeta}
\end{equation}
\begin{equation}\label{CEredNL}
\norm{\EredNL}_{\rho-\delta} \leq \frac{\CLEredNL}{\delta} \norm{\etaL}_\rho
+ \frac{\CNEredNL}{\delta} \norm{\etaN}_\rho, \phantom{\leq \frac{\CEredLN}{\delta}  \, \wnormeta}
\end{equation}
\begin{equation}\label{CEredNN}
\begin{split}
\norm{\EredNN}_{\rho-\delta} 
& \leq  \frac{\CLEredNN}{\delta} \norm{\etaL}_\rho +
\frac{\CNEredNN}{\delta} \norm{\etaN}_\rho \leq  \frac{\CEredNN}{\delta} \, \wnormeta,
\end{split}
\end{equation}
and
\begin{equation}\label{CEredLN}
\begin{split}
\norm{\EredLN}_{\rho-\delta}  
&\leq \frac{\CLEredLN}{\delta} \norm{\etaL}_{\rho} +  \frac{\CNEredLN}{\gamma \delta^{\tau+1} }\norm{\etaN}_{\rho}
\leq \frac{\CEredLN}{\delta}  \, \wnormeta.
\end{split}
\end{equation}
\end{lemma}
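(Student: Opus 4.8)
The plan is to write the reducibility error \eqref{eq:Ered} block by block, to reduce each block to an expression built only from the invariance error $E$, its first derivative $\Dif E$, and the Lagrangianity defect $\OmegaL$, and then to close the estimates with Lemmas~\ref{lem:approximate lagrangianity}, \ref{lem:errors} and \ref{lem:lie control}, whose hypotheses are all contained in the standing assumption \eqref{eq:new fake cond}. The bound \eqref{CT} is immediate from $T = \NT\,(T_\H\comp\K)\,\N$ and $H_1$--$H_2$, with $\CT = \sigmaNT\,\cteTh\,\sigmaN$. The starting point is the exact identity $\Xoper{L} = \begin{pmatrix}\Dif E & (\Dif\Xp)\comp\K\,[E]\end{pmatrix}$: differentiating $E = \Xh\comp\K + \Loper\K$ and commuting $\Dif$ with $\Loper$ gives $\Xoper{\DK} = \Dif E$, while $\Loper(\Xp\comp\K) = (\Dif\Xp)\comp\K[E] - (\Dif\Xp\,\Xh)\comp\K$, and the involution identity $\Dif\Xp\,\Xh = \Dif\Xh\,\Xp$ cancels the last term against $\Dif\Xh\comp\K\,(\Xp\comp\K)$, leaving $\Xoper{\Xp\comp\K} = (\Dif\Xp)\comp\K[E]$. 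Since $\Omega_0^{\text{-}1} = \Omega_0^\ttop$ is block-antidiagonal and $P = (L\mid N)$, writing out \eqref{eq:Ered} gives
\[
\EredLL = \NT\OK\,\Xoper{L},\qquad \EredNL = -\LT\OK\,\Xoper{L},\qquad \EredLN = \NT\OK\,\Xoper{N} - T,\qquad \EredNN = -\LT\OK\,\Xoper{N}.
\]
Inserting the formula for $\Xoper{L}$, using $H_1$--$H_2$ (in particular $\norm{\Omega}_{\cmani}\le\cteOmega$, $\norm{\Dif\Xp}_{\cmani}\le\cteDXp$, $\norm{\NT}_\rho<\sigmaNT$, $\norm{\LT}_\rho<\sigmaLT$), together with the estimates \eqref{bound:E-eta}--\eqref{bound:DET-eta} of Lemma~\ref{lem:errors} (the $\Dif E$ entries producing the factor $1/\delta$), the bounds \eqref{CEredLL} and \eqref{CEredNL} follow directly.

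For the two $N$-blocks I would expand $\Xoper{N}$ as in the proof of Proposition~\ref{prop:reducibility}, but now keeping the error terms through $\Loper\K = E - \Xh\comp\K$ and $\Loper\L = \Xoper{L} - \Dif\Xh\comp\K\,\L$; using $J^2 = -I_{2n}$, the compatibility relations $\Omega J = -G$, $J^\ttop\Omega = G$, $J^\ttop G = -\Omega$, the defining formula for $T_\H$, and $\OmegaL = \LT\OK\,\L$, one obtains
\[
\OK\,\Xoper{N} = T_\H\comp\K\,\N + \OK\,(\Dif J)\comp\K[E]\,\L\,\B - G\comp\K\,\Xoper{L}\,\B - G\comp\K\,\L\,\Loper\B.
\]
Contracting with $\NT$ and using $\NT\,(T_\H\comp\K)\,\N = T$ (the definition \eqref{def:T2}) and $\NT\,(G\comp\K)\,\L = -\B\,\OmegaL$ (from $J^\ttop G = -\Omega$ and $\GL\B = I_n$), the torsion cancels and
\[
\EredLN = \NT\OK\,(\Dif J)\comp\K[E]\,\L\,\B - \NT\,(G\comp\K)\,\Xoper{L}\,\B + \B\,\OmegaL\,\Loper\B.
\]
Here the first two summands are controlled by $\norm{E}_{\rho-\delta}$ and $\norm{\Dif E}_{\rho-\delta}$ via Lemma~\ref{lem:errors}, while the last is controlled by $\norm{\OmegaL}_{\rho-\delta}\le\CNOmegaL(\gamma\delta^{\tau+1})^{\text{-}1}\norm{\etaN}_\rho$ (Lemma~\ref{lem:approximate lagrangianity}) and $\norm{\Loper\B}_{\rho-\delta}\le\CLieB$ (Lemma~\ref{lem:lie control}). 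It is precisely this single summand that needs a R\"ussmann estimate, and it dictates the asymmetric bound \eqref{CEredLN}.

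The block $\EredNN = -\LT\OK\,\Xoper{N}$ carries the only real algebra, and organizing it is the main obstacle: one must show the $NN$-block is free of any term requiring a R\"ussmann estimate, so that the $\gamma\delta^{\tau+1}$ denominator appears only in $\EredLN$. From the expansion above, $\LT\,(G\comp\K)\,\L = \GL$, so $\LT\,(G\comp\K)\,\L\,\Loper\B = \GL\Loper\B = -\Loper\GL\,\B$ because $\Loper(\GL\B) = O_n$. The dangerous term is $-\LT\,(T_\H\comp\K)\,\N$; I would dispose of it with the pointwise identity $T_\H\,J = \Dif G[\Xh] + (\Dif\Xh)^\ttop G + G\,\Dif\Xh$ (the matrix Lie derivative of the metric along $\Xh$), which follows from the defining formula for $T_\H$, the relation $\Omega J = -G$ and the Lie‑invariance \eqref{eq:prop2killSK} of $\sform$ along $\Xh$, by the same computation that showed $T_\H$ symmetric. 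Since $\N = J\comp\K\,\L\,\B$ this gives $\LT\,(T_\H\comp\K)\,\N = \LT\,(\Dif G[\Xh] + (\Dif\Xh)^\ttop G + G\,\Dif\Xh)\comp\K\,\L\,\B$; on the other hand, expanding $\Loper\GL = (\Loper\L)^\ttop G\comp\K\,\L + \LT\,(\Dif G)\comp\K[\Loper\K]\,\L + \LT\,(G\comp\K)\,\Loper\L$ with $\Loper\L = \Xoper{L} - \Dif\Xh\comp\K\,\L$ and $\Loper\K = E - \Xh\comp\K$ shows its $E$‑free part is exactly $-\LT\,(\Dif G[\Xh] + (\Dif\Xh)^\ttop G + G\,\Dif\Xh)\comp\K\,\L$, so the two $E$‑free contributions annihilate and
\[
\EredNN = -(\Xoper{L})^\ttop\,(G\comp\K)\,\L\,\B - \LT\,(\Dif G)\comp\K[E]\,\L\,\B - \LT\OK\,(\Dif J)\comp\K[E]\,\L\,\B,
\]
a sum in which every summand contains a factor $E$ or $\Dif E$, whence \eqref{CEredNN} by Lemma~\ref{lem:errors}. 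Throughout, all maps are kept on $\bar\TT^d_{\rho-\delta}$ and Lemma~\ref{lem:errors} is used in the form that already produces $\Dif E$ on $\bar\TT^d_{\rho-\delta}$ without an extra Cauchy bite; collecting the constants yields the entries recorded in Appendix~\ref{app:constants}, Table~\ref{tab:some lemmas}.
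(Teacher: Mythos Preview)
Your argument is correct and reaches the right structural conclusion: each block of $\Ered$ is controlled by $E$, $\Dif E$ and, in the $LN$-block only, by $\OmegaL$, so that the small-divisor factor $(\gamma\delta^{\tau+1})^{-1}$ appears only in \eqref{CEredLN}. Your derivation of $\OK\,\Xoper{N}$ and the identity $T_\H J = \Dif G[\Xh] + (\Dif\Xh)^\ttop G + G\,\Dif\Xh$ are valid and give the clean cancellation in $\EredNN$ that you claim.

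The paper's route differs in two places. For $\EredLL$ and $\EredNL$ it does \emph{not} invoke the bound on $\Dif E$ from Lemma~\ref{lem:errors}; instead it rewrites $\NT\OK\,\Dif E = -\Dif(\NT\OK)\,E - \Dif\etaL$ (and analogously with $\LT$ and $\etaN$), so one only needs a Cauchy bite on $\etaL$, $\etaN$, $\NT\OK$, $\LT\OK$ together with the bound \eqref{bound:E-eta} on $E$ itself. For $\EredNN$ the paper applies $\Loper$ to the identity $\LT\OK\,\N = -I_n$ and uses \eqref{eq:prop2killSK} to obtain $\EredNN = \LT(\Dif\Omega)\comp\K[E]\,\N - (\EredLL)^\ttop$, recycling the $LL$-block rather than expanding $\Xoper{N}$ anew. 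Both routes are legitimate; the paper's avoids the extra Neumann-series layer hidden in $\CDE,\CDET$ and yields the specific constants recorded in Table~\ref{tab:some lemmas}. Your route produces different (and somewhat larger) constants, so your closing sentence that ``collecting the constants yields the entries recorded in Appendix~\ref{app:constants}'' is not literally true, but the form of the estimates is the same.
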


\begin{proof}
Using the notation in \eqref{def:Xoper}
we write the block components of \eqref{eq:Ered}:
\[
\Ered= 
\begin{pmatrix} 
\EredLL & \EredLN \\
\EredNL & \EredNN
\end{pmatrix}
= 
\begin{pmatrix}
N^\ttop\: \OK\: \Xoper{L} & N^\ttop\: \OK\:\Xoper{N} - T \\
-L^\ttop\: \OK\: \Xoper{L} & -L^\ttop\: \OK\: \Xoper{N}
\end{pmatrix}
\]

First, since 
\[
\EredLL = \begin{pmatrix}  N^\ttop \OK\: \Dif E & N^\ttop \OK\: \Dif X_p\comp K [E]\end{pmatrix} 
\]
and 
\[
	N^\ttop \OK\: \Dif E =   -\Dif(N^\ttop \OK) E - \Dif \etaL,
\]
then  we obtain \eqref{CEredLL}.
Analogously, 
\[
\EredNL = \begin{pmatrix}  -L^\ttop \OK\: \Dif E & -L^\ttop \OK\: \Dif X_p\comp K [E]\end{pmatrix} 
\]
and
\[
-L^\ttop \OK\: \Dif E =   \Dif(L^\ttop \OK) E - \Dif \etaN,
\]
from which we obtain \eqref{CEredNL}.

In order to bound $\EredNN$, we apply the left operator $\Loper$ to the identity $L^\ttop \OK\: N= -I_{n}$, 
and obtain 
\[
O_n=  \Loper  L^\ttop\: \OK\: N+ L^\ttop (\Dif\Omega)\comp K\: [E-X_h\comp K] +
L^\ttop \OK\: \Loper  N.
\]
Then, using  the geometric property \eqref{eq:prop2killSK}, we obtain
\begin{align*}
\EredNN= {} & 
L^\ttop (\Dif \OK\: [E]) N
+\Xoper{L}^\ttop \OK\:N = L^\ttop (\Dif \OK\: [E]) N - (\EredLL)^\ttop,
\label{eq:Ered22}
\end{align*}
where 
\[
(\EredLL)^\ttop = 
\begin{pmatrix}  
( \Dif( \OK\: N) E)^\ttop - (\Dif \etaL)^\ttop 
\\  -(\Dif X_p\comp K [E])^\ttop \OK\: N
\end{pmatrix},
\]
from which we obtain \eqref{CEredNN}.

Finally, 
\[
\begin{split}
\EredLN 
&= B\: L^\ttop\: G\comp K\: (\Dif J)\comp K [E]\: L \: B + B\: L^\ttop\: \OK\: \Xoper{L}\: B + B\: \Omega_L\: \Loper B \\
&= B\: L^\ttop\: G\comp K\: (\Dif J)\comp K [E]\: L \: B - B\: \EredNL\: B + B\: \Omega_L\: \Loper B.
\end{split}
\]
from which we obtain \eqref{CEredLN}.
\end{proof}

\subsection{One step of the iterative procedure}
\label{ssec:iterative lemma}

Here we apply one correction of the (modified) quasi-Newton method
described in Subsection~\ref{ssec:quasi Newton} and we obtain  quantitative
estimates for the new approximately invariant torus and related objects. We
set sufficient conditions to preserve the control of the previous
estimates. The constants that appear along the proof are collected in Appendix~\ref{app:constants},
Tables~\ref{tab:constants:all:2} and \ref{tab:constants:all:3}.

\begin{lemma}[The Iterative Lemma] \label{lem:iterative lemma}
For any $\delta\in ]0,\rho/3]$, there exist constants
$\Csym$, 
$\CxiL$, $\CDeltaKn$, $\CDeltaLn$, $\CDeltaLnT$,
$\CDeltaBn$, $\CDeltaNn$, $\CDeltaNnT$, $\CDeltaiTn$ and
$\QCetanL$, $\QCetanN$, 
such that if 
\begin{equation}\label{eq:iterative condition}
\frac{\hCDelta}{\gamma \delta^{\tau+1}}  \wnormeta< 1,
\end{equation}
where 
\begin{equation}\label{def:iterative constant}
\begin{split}
\hCDelta := \max \bigg\{ &
 \gamma\delta^\tau {\frac{\max\{1,\Csym\}}{\fake}},
   \CxiL, \frac{\delta\: \CDeltaKn}{\dist (\K(\TT^d_\rho),\partial \cmani_0)}, 
  \frac{\CDeltaDKn }{\sigmaDK- \norm{\DK}_\rho}, 
  \frac{\CDeltaDKnT }{\sigmaDKT - \norm{\DKT}_\rho}, 
\\
& 
\frac{\CDeltaBn}{\sigmaB - \norm{\B}_\rho}, \frac{\CDeltaNn }{\sigmaN - \norm{\N}_\rho}, \frac{\CDeltaNnT }{\sigmaNT - \norm{\NT}_\rho},
\frac{\CDeltaiTn}{\sigmaTinv - \abs{\aver{\T}^{\text{-}1}}},\frac{1}{\fakeetaN}  
\bigg\},
\end{split}
\end{equation}
then we have a new parameterization $\Kn\in (\Anal(\TT^d_{\rho-2\delta}))^{2n}$, 
that defines new objects $\Ln$, $\Bn$, $\Nn$ and $\Tn$ (obtained replacing $\K$ by $\Kn$ in the corresponding definitions) satisfying
\begin{align}
& \dist(\Kn(\TT^d_{\rho-2\delta}),\partial\cmani_0) >0, \label{eq:distB:iter1} \\
& \norm{\DKn}_{\rho-3\delta} < \sigmaDK, \label{eq:DK:iter1} \\
& \norm{\DKnT}_{\rho-3\delta} < \sigmaDKT, \label{eq:DKT:iter1} \\
& \norm{\Bn}_{\rho-3\delta} < \sigmaB, \label{eq:B:iter1} \\
& \norm{\Nn}_{\rho-3\delta} < \sigmaN, \label{eq:N:iter1} \\
& \norm{\NnT}_{\rho-3\delta} < \sigmaNT, \label{eq:NT:iter1} \\
& \abs{\aver{\Tn}^{\text{-}1}} < \sigmaTinv, \label{eq:iT:iter1} 
\end{align}
and
\begin{align}
& \norm{\Kn-\K}_{\rho-2 \delta} \leq 
\frac{\CDeltaKn}{\gamma\delta^\tau} \wnormeta,  \label{eq:est:DeltaK} \\
& \norm{\DKn-\DK}_{\rho-3 \delta} \leq 
\frac{\CDeltaDKn}{\gamma\delta^{\tau+1}} \wnormeta,  \label{eq:est:DeltaDK} \\
& \norm{\DKnT-\DKT}_{\rho-3 \delta} \leq 
\frac{\CDeltaDKnT}{\gamma\delta^{\tau+1}} \wnormeta,  \label{eq:est:DeltaDKT} \\
& \norm{\Bn-\B}_{\rho-3\delta} \leq  
\frac{\CDeltaBn}{\gamma\delta^{\tau+1}} \wnormeta, \label{eq:est:DeltaB} \\
& \norm{\Nn-\N}_{\rho-3 \delta} \leq 
\frac{\CDeltaNn}{\gamma\delta^{\tau+1}} \wnormeta,  \label{eq:est:DeltaN} \\
& \norm{\NnT-\NT}_{\rho-3 \delta} \leq 
\frac{\CDeltaNnT}{\gamma\delta^{\tau+1}} \wnormeta,  \label{eq:est:DeltaNT} \\
& \abs{\aver{\Tn}^{\text{-}1}-\aver{\T}^{\text{-}1} } \leq  
\frac{\CDeltaiTn}{\gamma\delta^{\tau+1}} \wnormeta. \label{eq:est:DeltaiT}
\end{align}
Moreover, the tangent and normal components of the new error of invariance 
\[
\En =  \Xh\comp \Kn + \Loper \Kn,
\]
satisfy
\begin{equation}\label{lemma:CxinN}
\norm{\etanN}_{\rho-3\delta} \leq \frac{\QCetanN}{\delta} \qwnormeta.
\end{equation}
and
\begin{equation}\label{lemma:CxinL}
\begin{split}
\norm{\etanL}_{\rho-3\delta} 
& \leq \frac{\QCetanL}{\gamma\delta^{\tau+1}} \qwnormeta.
\end{split}
\end{equation}
\end{lemma}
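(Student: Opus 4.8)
The plan is to carry out one step of the modified quasi-Newton scheme of Subsection~\ref{ssec:quasi Newton} with every analyticity loss and constant tracked. Note first that \eqref{eq:iterative condition}--\eqref{def:iterative constant} contain, through the entry $\gamma\delta^\tau\max\{1,\Csym\}/\fake$, the hypothesis \eqref{eq:new fake cond} of Lemmas~\ref{lem:approximate lagrangianity}--\ref{lem:approximate reducibility}, so all those lemmas are available. Since $\aver{\etaN}=0$ and $\aver{\T}$ is invertible, Lemma~\ref{lem:Russmann} gives the unique solution of \eqref{eq:stepL}--\eqref{eq:stepN} with $\aver{\xiL}=0$, namely $\xiN=\aver{\T}^{-1}\aver{\etaL-\T\Roper\etaN}+\Roper\etaN$ and $\xiL=\Roper(\etaL-\T\xiN)$. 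R\"ussmann and R\"ussmann--Cauchy estimates (Lemma~\ref{lem:Russmann}, Corollary~\ref{lem:Russmann-Cauchy}) together with the torsion bound \eqref{CT} then give $\norm{\xiN}_{\rho-\delta}$ of order $\wnormeta$, $\norm{\xiL}_{\rho-2\delta}$ of order $\frac{1}{\gamma\delta^\tau}\wnormeta$, and $\norm{\Dif\xiLDK}_{\rho-3\delta}$ of order $\frac{1}{\gamma\delta^{\tau+1}}\wnormeta$; the key point, used below, is that $\Loper\xiL=\etaL-\T\xiN$ is read off \eqref{eq:stepL} directly, so $\norm{\Loper\xiL}_{\rho-\delta}$ is of order $\wnormeta$ \emph{with no Cauchy loss}. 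This defines $\CxiL$.

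Taylor expanding \eqref{eq:new-aproximation-new} and using $\Dif_s\Phi_0\comp\K=\Xp\comp\K$ one gets $\Kn=\K+\L\xi+\text{h.o.t.}$ Smallness of $\xiL$, guaranteed by the remaining entries of \eqref{def:iterative constant} (those with denominators $\dist(\K(\TT^d_\rho),\partial\cmani_0)$, $\sigmaDK-\norm{\DK}_\rho$, and so on), makes $\id+\xiLDK$ a diffeomorphism of $\TT^d$ sending $\TT^d_{\rho-2\delta}$ into $\TT^d_{\rho-\delta}$, keeps $\xiLXp$ in the domain of $\Phi$, and keeps the image inside $\cmani_0$ (using $\cdomPhi_0\subset\cdomPhi$ from $H_2$); hence $\Kn\in(\Anal(\TT^d_{\rho-2\delta}))^{2n}$ is an embedding and \eqref{eq:distB:iter1} holds. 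Differentiating and composing, with the mean value theorem and the bounds just obtained, yields \eqref{eq:est:DeltaK}--\eqref{eq:est:DeltaDKT}; then $\Ln=(\DKn,\Xp\comp\Kn)$, the recomputed pullback metric, $\Bn$ (invertible by perturbation of $\B$), $\Nn=J\comp\Kn\,\Ln\,\Bn$, $\Tn=\Nn^\ttop\,T_\H\comp\Kn\,\Nn$ and $\aver{\Tn}^{-1}$ (invertible via Appendix~\ref{app:invertibility}) are each a fixed algebraic expression in quantities close to the old ones, which gives \eqref{eq:est:DeltaB}--\eqref{eq:est:DeltaiT}; since the inequalities in $H_2$ are strict, condition \eqref{eq:iterative condition} forces each increment below its gap, which gives \eqref{eq:DK:iter1}--\eqref{eq:iT:iter1}.

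Using that the moment flow commutes with $\Xh$, preserves $\P$, and satisfies $\Dif_s\Phi=\Xp\comp\Phi$, together with $\Dif(\id+\xiLDK)\,\omega=\omega-\Loper\xiLDK$, a direct computation gives the exact identity
\[
\En=\Dif_z\Phi\big[\big(\Xh\comp(\K+\N\xiN)+\Loper(\K+\N\xiN)\big)\comp(\id+\xiLDK)+\Dif(\K+\N\xiN)\comp(\id+\xiLDK)\;\Loper\xiLDK\big]+\Xp\comp\Kn\;\Loper\xiLXp,
\]
where $\Dif_z\Phi$ and $\Xp\comp\Kn$ are evaluated at the natural arguments. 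Expanding the inner bracket, substituting $\Loper\xiN=\etaN$ and $\Loper\xiL=\etaL-\T\xiN$, and invoking the approximate reducibility of Lemma~\ref{lem:approximate reducibility}, the exact relation $\L^\ttop\OK\,\N=-I_n$ and the approximate symplecticity $\Esym$ (Lemma~\ref{lem:approximate symplecticity}), every term linear in $(\etaL,\etaN)$ cancels against $\E$, exactly as in the heuristic derivation of Subsection~\ref{ssec:quasi Newton}. What remains is a sum of products of two quantities each linear in $\eta$ (Taylor remainders of the compositions, and $\Esym$-, $\Ered$- and $\Loper$-error terms), possibly carrying extra negative powers of $\gamma\delta^{\tau+1}$ that are reabsorbed through \eqref{eq:iterative condition}.

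Projecting via $\etanL=-\Nn^\ttop\Omega\comp\Kn\,\En$ on $\TT^d_{\rho-3\delta}$, with the Banach-algebra and Cauchy estimates of Subsection~\ref{ssec:analytic setting} and the previous step to trade $\Nn,\Ln,\Kn$ for the old objects up to controlled errors, gives \eqref{lemma:CxinL}, i.e. $\norm{\etanL}_{\rho-3\delta}$ of order $\frac{1}{\gamma\delta^{\tau+1}}\qwnormeta$. For $\etanN$ one uses instead $\etanN=\big((\Dif(\H\comp\Kn))^\ttop-\Omega_{\DKn}\,\omega,\ \Dif(\P\comp\Kn)\,\omega\big)^\ttop$ together with $\P\comp\Kn=\P\comp(\K+\N\xiN)\comp(\id+\xiLDK)$ (since $\P\comp\Phi_s=\P$) and the constancy of $\Dif\P\comp\K\,\N$; this makes manifest that each contribution to $\etanN$ pairs a factor $\etaN$ or $\Loper\xiL$ with a factor $\xiN$ or $\eta$, and since every occurrence of $\xiN$ carries the weight $\gamma\delta^\tau$ absorbed (recall $\norm{\etaN}_\rho\le\gamma\delta^\tau\wnormeta$), no net power of $\gamma$ survives, giving the improved bound \eqref{lemma:CxinN}, $\norm{\etanN}_{\rho-3\delta}$ of order $\frac{1}{\delta}\qwnormeta$. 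Collecting the intermediate constants defines $\QCetanL,\QCetanN$ and the rest. \emph{The main obstacle} is precisely this last point: organizing the exact expression for $\En$ so that all linear terms cancel and, crucially, so that the \emph{normal} projection gains the extra factor $\gamma\delta^\tau$ over the naive bound. This is where the modified update pays off --- because the tangent correction enters $\Kn$ through the reparametrization $\id+\xiLDK$ and the flow $\Phi_{\xiLXp}$ rather than additively, $\En$ depends on $\xiL$ only through $\Loper\xiL$, which is algebraic by \eqref{eq:stepL} and costs no Cauchy estimate; this removes the surplus powers of $\gamma^{-1}\delta^{-\tau}$ that force the weaker smallness condition of \cite{HaroL19}. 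Checking that these savings survive the projection, plus matching every constant to the tables of Appendix~\ref{app:constants}, is the bulk of the work.
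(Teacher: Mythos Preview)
Your overall plan and the key insight are right: because the tangent correction enters $\Kn$ only through the composition $(\id+\xiLDK)$ and the flow $\Phi_{\xiLXp}$, the quantity $\Loper\xiL=\etaL-T\xiN$ appears algebraically and costs no Cauchy bite; this is indeed what produces \eqref{lemma:CxinL} and what distinguishes the modified scheme. The handling of the increments $\Kn-\K$, $\DKn-\DK$, $\Bn-\B$, $\Nn-\N$, $\aver{\Tn}^{-1}-\aver{\T}^{-1}$ via the mean-value theorem and Appendix~\ref{app:invertibility} matches the paper.

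The gap is in your route to \eqref{lemma:CxinN}. You propose to use the explicit formula $\etaN=\big((\Dif(\H\comp\K))^\ttop-\OmegaDK\,\omega,\;\Dif(\P\comp\K)\,\omega\big)$ applied to $\Kn$, invoking ``the constancy of $\Dif\P\comp\K\,\N$''. That constancy is not proved anywhere and is not true in general: $\Dif p\comp K\,N=\Dif p\comp K\,J\comp K\,L\,B$ has no reason to be $\theta$-independent. Without it your expansion does not exhibit the cancellation of the linear terms in $\etanN$, and the claimed pairing ``factor $\etaN$ or $\Loper\xiL$ with factor $\xiN$ or $\eta$'' is not justified.

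The paper closes this in a different and cleaner way. It works in two stages: first the \emph{intermediate} parameterization $\Ki=\K+\N\xiN$, for which one shows directly (using $\Loper\xiN=\etaN$, see \eqref{eq:etaiN}) that the intermediate normal error $\etaiN$ is already quadratically small of order $\delta^{-1}$; second, the exact factorization $\Ln=\Dif_z\Phi_{\xiLXp}\comp\Ki\comp(\id+\xiLDK)\,\Li\comp(\id+\xiLDK)\,(I_n+\hat\Dif\xiL)$ combined with the \emph{symplecticity of the moment flow}, $(\Dif_z\Phi_s)^\ttop\,\Omega\comp\Phi_s\,\Dif_z\Phi_s=\Omega$, collapses $\etanN$ to
\[
\etanN=(I_n+(\hat\Dif\xiL)^\ttop)\,\big(\etaiN\comp(\id+\xiLDK)+\Omega_{\Li}\comp(\id+\xiLDK)\,\Loper\xiL\big),
\]
with $\Omega_{\Li}$ in turn controlled by $\etaiN$ via Lemma~\ref{lem:approximate lagrangianity} applied to $\Ki$. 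Neither the intermediate step nor the symplecticity of $\Phi$ appears in your sketch, and both are exactly what make the $\etanN$ bound go through without the extra $\gamma^{-1}\delta^{-\tau}$.
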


\begin{proof} 
We divide the proof into several steps. Starting from the initial parameterization, $\K$,  
we first consider an intermediate parameterization, 
\begin{equation}\label{def:Ki}
\Ki= \K  + N\xiN
\end{equation}
and then compute the new parameterization as
\begin{equation*}\label{def:Kn}
\Kn= \Phi_\xiLXp\comp \Ki \comp (\id + \xiLDK).
\end{equation*}
In the following, we will invoke Lemmas~\ref{lem:errors}, \ref{lem:lie control} and 
\ref{lem:approximate reducibility}, whose condition \eqref{eq:fake cond} is included 
into the hypothesis \eqref{eq:iterative condition} 
(this corresponds to the first term in \eqref{def:iterative constant}).

\medskip

\paragraph {\emph{Step 1: Control of the intermediate parameterization}.}
We recall that, in \eqref{def:Ki}, 
\begin{equation*}\label{eq:xiN-step}
\xiN = \averxiN+ \Roper \etaN.
\end{equation*}
where 
\begin{equation*}\label{eq:averxiN-step}
\averxiN= \aver{T}^{\text{-}1} \aver{\etaL-T\Roper\etaN}.
\end{equation*}
Hence, from R\"ussmann estimates in Lemma~\ref{lem:Russmann} with bite $\rho$
to $\Roper\etaN$ we obtain
\begin{equation}\label{CaverxiN}
\begin{split}
\abs{\averxiN} & 
\leq \CLaverxiN \norm{\etaL}_{\rho} + \frac{\CNaverxiN}{\gamma \delta^\tau} \norm{\etaN}_\rho
\leq \CaverxiN \wnormeta.
\end{split}
\end{equation}
and with bite $\delta$ we obtain
\begin{equation}\label{CxiN}
\begin{split}
\norm{\xiN}_{\rho-\delta} & 
\leq \CLxiN \norm{\etaL}_{\rho} + \frac{\CNxiN}{\gamma \delta^\tau} \norm{\etaN}_\rho
\leq \CxiN \wnormeta.
\end{split}
\end{equation}


Then 
\[
\Ki - \K = \N \xiN 
\]
and 
\begin{equation}\label{CDeltaKi}
\norm{\Ki-\K}_{\rho - \delta} \leq \CLDeltaKi \norm{\etaL}_\rho + \frac{\CNDeltaKi}{\gamma\delta^\tau}    \norm{\etaN}_\rho \leq \CDeltaKi \ \wnormeta
\end{equation}
The intermediate torus should be included in the domain $\cmani_0$, more
specifically $\Ki(\TT^d_{\rho-\delta}) \subset \cmani_0$. To verify that,
notice that 
\begin{align*}
\dist(\Ki(\TT^d_{\rho-\delta}),\partial \cmani_0) \geq {} &  \dist(\K(\TT^d_\rho),\partial \cmani_0) - \norm{\Ki - \K}_{\rho-\delta} \nonumber \\
\geq {} &  \dist(\K(\TT^d_\rho),\partial \cmani_0) -  \CDeltaKi\ \wnormeta >0, \label{eq:ingredient:iter:1}
\end{align*}
where the last inequality follows if 
\begin{equation}\label{cond:DeltaKi}
\frac{\CDeltaKi}{\dist(\K(\TT^d_\rho),\partial \cmani_0)} \ \wnormeta < 1.
\end{equation}
As we will see, this condition is implied by the third term in
\eqref{def:iterative constant}.

Using R\"ussmann and Cauchy estimates, see Lemma~\ref{lem:Russmann-Cauchy}, 
on \[
\bar K-K= N\averxiN + N \Roper\etaN,
\]
we get
\begin{equation}\label{CDeltaDKi}
\norm{\DKi - \DK}_{\rho-\delta} \leq  
\frac{\CLDeltaDKi}{\delta} \norm{\etaL}_{\rho} + 
\frac{\CNDeltaDKi}{\gamma \delta^{\tau+1}} \norm{\etaN}_{\rho}
\leq  
\frac{\CDeltaDKi}{\delta} \wnormeta.
\end{equation}
and, analogously,
\begin{equation}\label{CDeltaDKiT}
\norm{\DKiT - \DKT}_{\rho-\delta} \leq  
\frac{\CLDeltaDKiT}{\delta} \norm{\etaL}_{\rho} + 
\frac{\CNDeltaDKiT}{\gamma \delta^{\tau+1}} \norm{\etaN}_{\rho}
\leq  
\frac{\CDeltaDKiT}{\delta} \wnormeta.
\end{equation}
Notice that, in particular, 
\begin{equation}\label{condDKi}
	\norm{\DKi}_{\rho-\delta} <\sigmaDK, \quad \norm{\DKiT}_{\rho-\delta} < \sigmaDKT
\end{equation}
provided that 
\begin{equation}\label{condDKi}
	\frac{\CDeltaDKi}{\delta (\sigmaDK - \norm{\DK}_\rho)} \wnormeta <1
\end{equation}
and
\begin{equation}\label{condDKiT}
\frac{\CDeltaDKiT}{\delta (\sigmaDKT - \norm{\DKT}_\rho)} \wnormeta <1,
\end{equation}
conditions that, as we will see, are implied by the fourth and fith terms in
\eqref{def:iterative constant}.

Let $\Li$ be the $\L$ object associated to $\Ki$. Notice that, from
\eqref{condDKi}, 
\begin{equation*}\label{CLi}
\norm{\Li}_{\rho-\delta} < \sigmaL, \quad \norm{\LiT}_{\rho-\delta} < \sigmaLT. 
\end{equation*}
Then, since  
\[
\Li-\L = \begin{pmatrix} \DKi-\DK & \int_0^1 (\Dif\Xp)\comp(\K+\lambda\N\xiN) \ d\lambda \: [\Ki-\K] \end{pmatrix},
\]
we obtain
\begin{equation}\label{CDeltaLi}
\norm{\Li-\L}_{\rho-\delta} \leq 
\frac{\CLDeltaLi}{\delta} \norm{\etaL}_{\rho} + 
\frac{\CNDeltaLi}{\gamma \delta^{\tau+1}} \norm{\etaN}_{\rho}
\leq  
\frac{\CDeltaLi}{\delta} \wnormeta.
\end{equation}
and 
\begin{equation}\label{CDeltaLiT}
\norm{\LiT-\LT}_{\rho-\delta} \leq 
\frac{\CLDeltaLiT}{\delta} \norm{\etaL}_{\rho} + 
\frac{\CNDeltaLiT}{\gamma \delta^{\tau+1}} \norm{\etaN}_{\rho}
\leq  
\frac{\CDeltaLiT}{\delta} \wnormeta.
\end{equation}

\medskip

\paragraph{\em Step 2: Computation of the intermediate invariance errors}

The error of invariance in the intermediate step is 
\begin{equation*}\label{eq:defEi}
\Ei  = \Xh\comp\Ki + \Loper \Ki 
\end{equation*}
which can be written as 
\begin{equation*} \label{eq:Ei0}
\begin{split}
\Ei 
&= \E + \Delta^1\! X_h +  \Loper\N\: \xiN + \N\: \Loper\xiN \\
&= \E + \Delta^1\! X_h +  \Loper\N\: \xiN + \N\: \etaN 
\end{split}
\end{equation*}
where
\begin{equation*}
\Delta^1\! X_h = \int_{0}^1 (\Dif X_h)\comp(\K + \lambda \N\xiN)\: d\lambda \ [\N\xiN],
\end{equation*}
from were we obtain the bound 
\begin{equation}\label{CEi}
\norm{\Ei}_{\rho-\delta} \leq \CLEi \norm{\etaL}_\rho + \frac{\CNEi}{\gamma\delta^\tau} \norm{\etaN}_\rho \leq \CEi\ \wnormeta.
\end{equation}

Also
\begin{equation} 
\label{eq:Ei1}
\begin{split}
\Ei & = \E + (\Dif X_h)\comp K \N\xiN + \Delta^2\! X_h + \Loper\N\: \xiN + \N\: \Loper\xiN \\
       & = \E + \Xoper{\N}\: \xiN + \Delta^2\! X_h + \N\: \Loper\xiN,
       \end{split}
\end{equation}
where
\begin{equation*}
	\Delta^2\! X_h = \int_{0}^1 (1-\lambda) (\Dif^2 X_h)\comp(\K + \lambda (\Ki-\K))\: d\lambda \ [\Ki-\K,\Ki-\K],
\end{equation*}
from where we obtain that the {\em intermediate normal error} is
\begin{equation}\label{eq:etaiN}
\begin{split}
\etaiN 
& = \LiT\: \Omega\comp \Ki\: \Ei \\
& =  (\LiT-\LT)\: \Omega\comp\Ki\: \Ei + \LT\: (\Omega\comp\Ki - \Omega\comp\K)\: \Ei 
+ \LT \Omega\comp\K (\E + \Xoper{\N}\: \xiN + \Delta^2 X_h + \N\: \Loper\xiN) \\
& =  (\LiT-\LT)\: \Omega\comp\Ki\: \Ei + \LT\: (\Omega\comp\Ki - \Omega\comp\K)\: \Ei 
+ \etaN - \EredNN \xiN + \LT \Omega\comp\K\Delta^2 X_h - \Loper\xiN \\
& =  (\LiT-\LT)\: \Omega\comp\Ki\: \Ei + \LT\: (\Omega\comp\Ki - \Omega\comp\K)\: \Ei - \EredNN \xiN + 
\LT \Omega\comp\K \:\Delta^2\! X_h,
\end{split}
\end{equation}
where we emphasize  that $\Loper\xiN= \etaN$ in $\TT^d_\rho$. 
Notice that the intermediate normal error is quadratically small. Quantitatively, 
\begin{equation}\label{QCetaiN}
\norm{\etaiN}_{\rho-\delta} \leq \frac{\QCetaiN}{\delta} \qwnormeta.
\end{equation}

\medskip

\paragraph{\em Step 3:  Control of the new parameterization}


The new approximation is
\begin{equation}\label{eq:Kn-step}
\Kn= \Phi_\xiLXp\comp \Ki \comp (\id + \xiLDK),
\end{equation}
where $\xiL= (\xiLDK, \xiLXp)$ is given by
\begin{equation*}\label{eq:xiL-step}
\xiL  = \Roper(\etaL - T \xiN).
\end{equation*}
Using R\"ussmann estimates we obtain
\begin{equation}\label{CxiL}
\begin{split}
\norm{\xiL}_{\rho-2\delta} \
& \leq \frac{\CLxiL}{\gamma \delta^\tau} \norm{\etaL}_{\rho} +  \frac{\CNxiL}{\gamma^2 \delta^{2\tau}} \norm{\etaN}_{\rho}  
 \leq \frac{\CxiL}{\gamma \delta^{\tau}} \wnormeta.
\end{split}
\end{equation}

Notice that $\Ki(\bar \TT^d_{\rho-\delta}) \subset \cmani_0$, and the
components of $\xiL$ are in $\Anal(\TT^d_{\rho-2\delta})$.  Hence, the
computation of $\Kn$ in  \eqref{eq:Kn-step} could be done if
$\norm{\xiLDK}_{\rho-2\delta} < \delta$ and $\norm{\xiLXp}_{\rho-2\delta} <
\smax$, where we recall $\smax$ is the width of complex `time-domain"  of
$\Phi_s$.  Since $\rho<\smax$, these are conditions that are implied by the
hypothesis 
\begin{equation}\label{conditionxiL}
 \frac{\CxiL}{\gamma \delta^{\tau+1}} \wnormeta < 1,
\end{equation}
which corresponds to the second term in \eqref{def:iterative constant}.

Then, 
\begin{equation}\label{DeltaKn}
\begin{split} 
\Kn - K 
 & =  \Phi_\xiLXp\comp \Ki \comp (\id + \xiLDK) - \Ki  + 
\Ki - \K \\
 & = \int_0^1 \frac{\partial}{\partial\lambda} \left(\Phi_{\lambda\xiLXp}\comp \Ki \comp (\id+\lambda \xiLDK)\right)\ \dif\lambda \: + 
\Ki - \K
 \\
 &=   \int_0^1 \Dif_z \Phi_{\lambda \xiLXp}\comp \Ki  \comp (\id + \lambda \xiLDK) \: \Li\comp(\id + \lambda \xiLDK) \ \dif\lambda \: \xiL
 \: +  \Ki - \K
 \end{split}
\end{equation}
from where 
\begin{equation}\label{CDeltaKn}
\begin{split}
\norm{\Kn - \K}_{\rho-2\delta} 
& \leq \cteDPhi  \sigmaL \norm{\xiL}_{\rho-2\delta} + \norm{\Ki-\K}_{\rho-\delta}  +   \\
& \leq \frac{\CLDeltaKn}{\gamma\delta^\tau} \norm{\etaL}_\rho + \frac{\CNDeltaKn}{\gamma^2\delta^{2\tau}} \norm{\etaN}_\rho \leq  \frac{\CDeltaKn}{\gamma\delta^\tau} \wnormeta.
\end{split}
\end{equation}

The new approximation should remain in $\cmani_0$. In particular, we observe
that 
\begin{align*}\label{eq:ingredient:iter:2}
\dist(\Kn(\TT^d_{\rho-2\delta}),\partial \cmani_0) \geq {} &  \dist(K(\TT^d_\rho),\partial \cmani_0) - \norm{\Kn - K}_{\rho-2\delta} \nonumber \\
\geq {} &  \dist(K(\TT^d_\rho),\partial \cmani_0) -  \frac{\CDeltaKn}{\gamma \delta^{\tau}}  \wnormeta >0, 
\end{align*}
where the last inequality follows from hypothesis \eqref{eq:iterative
condition} (this corresponds to the third term in \eqref{def:iterative
constant}).  We emphasize that this control includes the fact that
$\dist(\Ki(\TT^d_{\rho-\delta}),\partial \cmani_0)>0$ (see
\eqref{cond:DeltaKi}). Moreover, we get \eqref{eq:distB:iter1} and
\eqref{eq:est:DeltaK} in Lemma~\ref{lem:iterative lemma}.

By directly applying  Cauchy estimates to the first part of \eqref{DeltaKn} and bounds \eqref{CDeltaDKi} and \eqref{CDeltaDKiT} 
we get
\begin{equation} \label{CLDeltaKn}
\begin{split}
\norm{\DKn-\DK}_{\rho-3\delta} 
& \leq 
\frac{\CLDeltaDKn}{\gamma \delta^{\tau +1}} \norm{\etaL}_\rho + 
\frac{\CNDeltaDKn}{\gamma^2 \delta^{2\tau +1}} \norm{\etaN}_\rho 
\leq \frac{\CDeltaDKn}{\gamma\delta^{\tau+1}} \wnormeta,
\end{split}
\end{equation}
and
\begin{equation} \label{CLDeltaKnT}
\begin{split}
\norm{\DKnT-\DKT}_{\rho-3\delta} 
& \leq 
\frac{\CLDeltaDKnT}{\gamma \delta^{\tau +1}} \norm{\etaL}_\rho + 
\frac{\CNDeltaDKnT}{\gamma^2 \delta^{2\tau +1}} \norm{\etaN}_\rho 
\leq \frac{\CDeltaDKnT}{\gamma\delta^{\tau+1}} \wnormeta,
\end{split}
\end{equation}
that correspond to \eqref{eq:est:DeltaDK} and  \eqref{eq:est:DeltaDKT}.
Then \eqref{eq:DK:iter1} and \eqref{eq:DKT:iter1} follow from hypotheses
\[
	\frac{\CDeltaDKn}{\gamma \delta^{\tau+1} (\sigmaDK- \norm{\DK}_{\rho})} \wnormeta < 1
\]
and
\[
\frac{\CDeltaDKnT}{\gamma \delta^{\tau+1} (\sigmaDKT - \norm{\DKT}_{\rho})} \wnormeta < 1,
\]
which corresponds to the fourth and fifth terms in \eqref{def:iterative constant}, and imply conditions \eqref{condDKi} and \eqref{condDKiT}.

In the following, we write $\Ln, \Nn, \dots$ for the corresponding $\L, \N, \dots$ objects associated to $\Kn$. In particular, the new tangent frame is 
\[
\Ln =  \begin{pmatrix} \DKn & \Xp\comp\Kn \end{pmatrix},
\]
and
\[
	\norm{\Ln}_{\rho-3\delta} < \sigmaL, \quad \norm{\LnT}_{\rho-3\delta}< \sigmaLT.
\]
Moreover, since
\[
\Ln-\L = \begin{pmatrix} \DKn-\DK & \int_0^1 (\Dif\Xp)\comp(\K+ \lambda (\Kn-\K)) \ d\lambda \: (\Kn-\K) \end{pmatrix},
\]
we get 
\begin{equation}\label{CDeltaLn}
\norm{\Ln-\L}_{\rho-3\delta} \leq 
\frac{\CLDeltaLn}{\gamma\delta^{\tau+1}} \norm{\etaL}_{\rho} + 
\frac{\CNDeltaLn}{\gamma^2 \delta^{2\tau+1}} \norm{\etaN}_{\rho}
\leq  
\frac{\CDeltaLn}{\gamma\delta^{\tau+1}} \wnormeta
\end{equation}
and 
\begin{equation}\label{CDeltaLnT}
\norm{\LnT-\LT}_{\rho-3\delta} \leq 
\frac{\CLDeltaLnT}{\gamma\delta^{\tau+1}} \norm{\etaL}_{\rho} + 
\frac{\CNDeltaLnT}{\gamma^2 \delta^{2\tau+1}} \norm{\etaN}_{\rho}
\leq  
\frac{\CDeltaLnT}{\gamma\delta^{\tau+1}} \wnormeta.
\end{equation}

The new restricted metric is 
\[
	\GLn = \LnT  G\comp \Kn\: \Ln,
\]
from where 
\[
	\GLn- \GL = (\LnT-\LT) \:  G\comp \Kn\: \Ln+ \LT\:  (G\comp\Kn - G\comp K)\: \Ln + 
	\LT\: G\comp K\: (\Ln-\L)
\]
and, then, 
\begin{equation}\label{CDeltaGLn}
	\norm{\GLn-\GL}_{\rho-3\delta} \leq 
	\frac{\CLDeltaGLn}{\gamma\delta^{\tau+1}} \norm{\etaL}_{\rho} + 
\frac{\CNDeltaGLn}{\gamma^2 \delta^{2\tau+1}} \norm{\etaN}_{\rho}
\leq  
	\frac{\CDeltaGLn}{\gamma\delta^{\tau+1}} \wnormeta.
\end{equation}

We know that $\GL$ is invertible (in $\bar\TT^d_{\rho}$), 
and $B= \invGL$ with $\norm{B}_\rho< \sigmaB$.  
We introduce now the constants
\[
\CLDeltaBn:= (\sigmaB)^2 \CLDeltaGLn, \quad
\CNDeltaBn := (\sigmaB)^2 \CNDeltaGLn, \quad
\CDeltaBn := (\sigmaB)^2 \CDeltaGLn.
\]
Then, since 
\begin{equation*}\label{eq:ingredient:iter:6}
\frac{\CDeltaBn}{\gamma\delta^{\tau+1} (\sigmaB-\norm{\B}_\rho)}\ \ \wnormeta < 1,
\end{equation*}
that corresponds to the sixth term  in \eqref{def:iterative constant}, 
from Lemma~\ref{lem:invertibility} we obtain that $\GLn$ is invertible (in $\bar\TT^d_{\rho-3\delta}$) and
\[
\norm{\Bn-\B}_{\rho-3\delta} < (\sigmaB)^2 \norm{\GLn-\GL}_{\rho-3\delta},\quad \norm{\Bn}_{\rho-3\delta} < \sigmaB,
\]
from where we obtain that 
\begin{equation}\label{CDeltaBn}
\norm{\Bn-\B}_{\rho-3\delta} \leq \frac{\CLDeltaBn}{\gamma\delta^{\tau+1}} \norm{\etaL}_{\rho} + 
\frac{\CNDeltaBn}{\gamma^2 \delta^{2\tau+1}} \norm{\etaN}_{\rho}
\leq  
	\frac{\CDeltaBn}{\gamma\delta^{\tau+1}} \wnormeta.
\end{equation}
We obtain estimates \eqref{eq:B:iter1} and \eqref{eq:est:DeltaB} in Lemma~\ref{lem:iterative lemma}.

The new normal frame is 
\[
\Nn = J\comp\Kn \Ln \Bn.
\]
Since
\[
\Nn - \N= (J\comp\Kn -J\comp \K) \Ln \Bn + J\comp \K  (\Ln-\L) \Bn + J\comp\K \L (\Bn-\B),
\]
then
\begin{equation}\label{CDeltaNn}
\norm{\Nn-\N}_{\rho-3\delta} \leq   \frac{\CLDeltaNn}{\gamma\delta^{\tau+1}} \norm{\etaL}_{\rho} + 
\frac{\CNDeltaNn}{\gamma^2 \delta^{2\tau+1}} \norm{\etaN}_{\rho}
\leq  
	\frac{\CDeltaNn}{\gamma\delta^{\tau+1}} \wnormeta
\end{equation}
and
\begin{equation}\label{CDeltaNnT}
\norm{\NnT-\NT}_{\rho-3\delta} \leq   \frac{\CLDeltaNnT}{\gamma\delta^{\tau+1}} \norm{\etaL}_{\rho} + 
\frac{\CNDeltaNnT}{\gamma^2 \delta^{2\tau+1}} \norm{\etaN}_{\rho}
\leq  
	\frac{\CDeltaNnT}{\gamma\delta^{\tau+1}} \wnormeta.
\end{equation}
Then, \eqref{eq:N:iter1} and \eqref{eq:NT:iter1}
follow from hypotheses
\[
	\frac{\CDeltaNn}{\gamma \delta^{\tau+1} (\sigmaN - \norm{\N}_{\rho})} \wnormeta < 1
\]
and
\[
\frac{\CDeltaNnT}{\gamma \delta^{\tau+1} (\sigmaNT - \norm{\NT}_{\rho})} \wnormeta < 1,
\]
which corresponds to the seventh and eighth terms in \eqref{def:iterative constant}.
Hence, we obtain estimates 
\eqref{eq:N:iter1},  \eqref{eq:NT:iter1} and \eqref{eq:est:DeltaN}, \eqref{eq:est:DeltaNT}
in Lemma~\ref{lem:iterative lemma}.

Finally, the new torsion is 
\[
	\Tn= \NnT\: \T_h\comp\Kn\: \Nn,
\]
and satisfies
\[
	\norm{\Tn}_{\rho-3\delta} \leq \CT.
\]
Moreover, since
\[
	\Tn - \T= (\NnT-\NT)\T_h\comp\Kn\: \Nn + \NT (\T_h\comp\Kn-\T_h\comp\K) \: \Nn +  
	\NT \T_h\comp\K\: (\Nn-\N),
\]
we obtain
\begin{equation}\label{CDeltaTn}
  \norm{\Tn-\T}_{\rho-3\delta} \leq   \frac{\CLDeltaTn}{\gamma\delta^{\tau+1}} \norm{\etaL}_{\rho} + 
\frac{\CNDeltaTn}{\gamma^2 \delta^{2\tau+1}} \norm{\etaN}_{\rho}
\leq  
	\frac{\CDeltaTn}{\gamma\delta^{\tau+1}} \wnormeta.
\end{equation}
We know that $\aver{\T}$ is invertible and $\abs{\aver{\T}^{\text{-}1}}< \sigmaTinv$. Mimicking the arguments made above for 
$\GL$ and $B= \invGL$,   we introduce the constants
\[
\CLDeltaiTn:= (\sigmaTinv)^2 \CLDeltaTn, \quad
\CNDeltaiTn := (\sigmaTinv)^2 \CNDeltaTn, \quad
\CDeltaiTn := (\sigmaTinv)^2 \CDeltaTn.
\]
Then, since 
\begin{equation*}
\frac{\CDeltaiTn}{\gamma\delta^{\tau+1} (\sigmaTinv-\abs{\aver{\T}^{\text{-}1}})}\ \ \wnormeta < 1,
\end{equation*}
which corresponds to the ninth term in \eqref{def:iterative constant},
we obtain that $\aver{\Tn}$ is invertible and
\[
\abs{\aver{\Tn}^{\text{-}1}-\aver{\T}^{\text{-}1}} < (\sigmaTinv)^2 \norm{\Tn-\T}_{\rho-3\delta},\quad \abs{\aver{\Tn}^{\text{-}1}}_{\rho-3\delta} < \sigmaTinv,
\]
from where we obtain that 
\begin{equation}\label{CDeltaiTn}
\abs{\aver{\Tn}^{\text{-}1}- \aver{\T}^{\text{-}1}} \leq \frac{\CLDeltaiTn}{\gamma\delta^{\tau+1}} \norm{\etaL}_{\rho} + 
\frac{\CNDeltaiTn}{\gamma^2 \delta^{2\tau+1}} \norm{\etaN}_{\rho}
\leq  
	\frac{\CDeltaiTn}{\gamma\delta^{\tau+1}} \wnormeta.
\end{equation}
We obtain then the estimates \eqref{eq:iT:iter1} and \eqref{eq:est:DeltaiT} on the new object.

\medskip

\paragraph{\emph{Step 4: Computation of the  new invariance errors}.}

In order to compute the new invariance error $\En$, we first compute
\begin{equation}\label{eq:DKn}
\begin{split}
\DKn 
&= 
\Dif_s \Phi_{\xiLXp}\comp \Ki \comp (\id+\xiLDK)\:  \Dif\xiLXp \\ 
&\phantom{=} + 
\Dif_z \Phi_{\xiLXp}\comp \Ki \comp (\id+\xiLDK)\: \Dif\Ki\comp (\id+\xiLDK)\: (I_d+ \Dif\xiLDK)
\\
&= 
\Xp\comp  \Phi_{\xiLXp}\comp \Ki \comp (\id+\xiLDK)\:  \Dif\xiLXp \\
&\phantom{=} + 
\Dif_z \Phi_{\xiLXp}\comp \Ki \comp (\id+\xiLDK)\: \Dif\Ki\comp (\id+\xiLDK)\: (I_d+ \Dif\xiLDK) 
\\
&= \Dif_z \Phi_{\xiLXp}\comp \Ki \comp (\id+\xiLDK)\:  \left(\Xp\comp  \Ki \comp (\id+\xiLDK)\:  \Dif\xiLXp + 
\Dif\Ki\comp (\id+\xiLDK)\: (I_d+ \Dif\xiLDK) \right),
\end{split}
\end{equation}
then 
\begin{equation*}\label{eq:LoperKn}
\begin{split}
\Loper\Kn 
& = \Dif_z \Phi_{\xiLXp}\comp \Ki \comp (\id+\xiLDK) \: (\Xp \comp \Ki \comp (\id+\xiLDK) \Loper\xiLXp + \Loper\Ki\comp(\id+\xiLDK) { \phantom )} \\ & \phantom{=} + {\phantom(}
\DKi\comp(\id+\xiLDK)\: \Loper\xiLDK)\\
& = \Dif_z \Phi_{\xiLXp}\comp \Ki \comp (\id+\xiLDK) \: (\Loper\Ki\comp(\id+\xiLDK) +\Li\comp(\id+\xiLDK)\: \Loper\xiL)
\end{split}
\end{equation*}
and 
\begin{equation*}\label{eq:XhKn}
\begin{split}
\Xh\comp\Kn 
& = \Xh\comp \Phi_{\xiLXp}\comp \Ki \comp (\id+\xiLDK) \\
& = \Dif_z \Phi_{\xiLXp}\comp \Ki \comp (\id+\xiLDK)\: \Xh\comp \Ki \comp (\id+\xiLDK).
\end{split}
\end{equation*}
As a result,
\begin{equation}\label{eq:En}
\begin{split}
\En 
&= \Dif_z \Phi_{\xiLXp}\comp \Ki \comp (\id+\xiLDK) \left( \Ei \comp (\id+\xiLDK) + \Li\comp(\id+\xiLDK)\: \Loper\xiL\right) \\
&= \Ei \comp (\id+\xiLDK) + \Li\comp(\id+\xiLDK)\: \Loper\xiL  + \\
&\phantom{=}  \left(  \Dif_z \Phi_{\xiLXp}\comp \Ki \comp (\id+\xiLDK) - I_{2n} \right) \left( \Ei \comp (\id+\xiLDK) + \Li\comp(\id+\xiLDK)\: \Loper\xiL\right).
\end{split}
\end{equation}
In oder to get a  crude bound of $\En$ from the first line in \eqref{eq:En}, we first bound
\begin{equation}\label{CLiexiL}
\begin{split}
\norm{\Loper\xiL}_{\rho-\delta} 
& = \norm{\etaL - T \xiN}_{\rho-\delta} \\
& \leq \CLLiexiL \norm{\etaL}_\rho + \frac{\CNLiexiL}{\gamma\delta^\tau} \norm{\etaN}_\rho
\leq \CLiexiL \wnormeta
\end{split}
\end{equation}
(notice that $\Loper\xiL= \etaL-T\xiN$ is in fact real-analytic in $\TT_\rho^d$), 
and, then,
\begin{equation}\label{CEn}
\begin{split}
\norm{\En}_{\rho-2\delta} 
& 
\leq \cteDPhi (\norm{\Ei}_{\rho-\delta} + \sigmaL \norm{\Loper\xiL}_{\rho-\delta}) 
\\
& \leq \CLEn \norm{\etaL}_\rho + \frac{\CNEn}{\gamma\delta^\tau} \norm{\etaN}_\rho 
\leq \CEn \wnormeta.
\end{split}
\end{equation}
A posteriori, we will see that $\En$ is quadratically small.

To compute the new normal error $\etanN$ we first compute the  new tangent frame $\Ln$. To do so, we first obtain
\begin{equation*}\label{eq:XpKn}
\begin{split} 
\Xp\comp\Kn 
&= \Xp\comp \Phi_{\xiLXp}\comp \Ki \comp (\id+\xiLDK)\\
&=  \Dif_z \Phi_{\xiLXp}\comp \Ki \comp (\id+\xiLDK) \: \Xp\comp \Ki \comp (\id+\xiLDK)
\end{split}
\end{equation*}
and, using \eqref{eq:DKn}, we get
\begin{equation}\label{eq:Ln}
\begin{split}
\Ln &=  \Dif_z \Phi_{\xiLXp}\comp \Ki \comp (\id+\xiLDK)  \: 
 \Li\comp (\id+\xiLDK) \: ( I_n + \hat\Dif\xiL ),
\end{split}
\end{equation}
where 
\begin{equation*}\label{eq:hDxiL}
\hat\Dif\xiL := \begin{pmatrix} \Dif\xiL & O_{n\times{n-d}}\end{pmatrix}.  
\end{equation*}
From \eqref{eq:En} and \eqref{eq:Ln}
we get
\begin{equation*}\label{eq:bbetaN}
\begin{split}
\etanN & = \Ln^\ttop \Omega\comp\Kn\: \En \\
& = 
(I_n +  (\hat\Dif\xiL)^\ttop)\:(\Li\comp (\id+\xiLDK))^\ttop 
\:\left( \Dif_z \Phi_{\xiLXp}\comp \Ki \comp (\id+\xiLDK)  \right)^\ttop \:\Omega\comp \Phi_{\xiLXp}\comp \Ki \comp (\id+\xiLDK) \\
&\phantom{=}
\:\Dif_z \Phi_{\xiLXp}\comp \Ki \comp (\id+\xiLDK) 
\left( \Ei \comp (\id+\xiLDK) + \Li\comp(\id+\xiLDK)\: \Loper\xiL\right) 
\\
&= (I_n +  (\hat\Dif\xiL)^\ttop)\:(\Li\comp (\id+\xiLDK))^\ttop  
\:\Omega\comp \Ki \comp (\id+\xiLDK)\: \left( \Ei \comp (\id+\xiLDK) + \Li\comp(\id+\xiLDK)\: \Loper\xiL\right)
\\
&=  (I_n +  (\hat\Dif\xiL)^\ttop)\:\left( \etaiN \comp (\id+\xiLDK) + \Omega_\Li \comp (\id+\xiLDK) \: \Loper\xiL\right).
\end{split}
\end{equation*}
See \eqref{eq:etaiN}. We observe that $\etanN$ is quadratically small. Quantitatively, 
since 
\[
\norm{(\Dif \xiL)^\ttop}_{\rho-3\delta} \leq \frac{n}{\delta} \norm{\xiL}_{\rho-2\delta} 
\leq \frac{n \CxiL}{\gamma \delta^{\tau +1}} \wnormeta < n,
\]
where we use condition \eqref{conditionxiL}, 
\[
\norm{\Omega_\Li \comp (\id+\xiLDK)}_{\rho-3\delta} 
 \leq 
\norm{\Omega_\Li}_{\rho-2\delta} \leq \frac{\CNOmegaL}{\gamma\delta^{\tau+1}} \norm{\etaiN}_{\rho-\delta} 
\leq  \frac{\CNOmegaL}{\gamma\delta^{\tau+1}}  \frac{\QCetaiN}{\delta} \qwnormeta,
\]
were we apply Lemma~\ref{lem:approximate lagrangianity} to $\Ki$ and bound \eqref{QCetaiN}, and, finally, 
applying \eqref{CLiexiL} and hypothesis~\ref{eq:iterative condition}, coming from the tenth term of 
\eqref{def:iterative constant}, 
we get
\begin{equation}\label{QCetanN}
\norm{\etanN}_{\rho-3\delta} \leq \frac{\QCetanN}{\delta} \qwnormeta,
\end{equation}
which corresponds to the bound \eqref{lemma:CxinN} in Lemma~\ref{lem:iterative lemma}.

The  new tangent error is
\begin{equation}\label{etanL}
\begin{split}
\etanL 
&= -\NnT\: \Omega\comp\Kn\: \En 
\\
&= - \NT \Omega\comp\K \: \En  
-  \NT \: (\Omega\comp\Kn - \Omega\comp\K)\: \En  -  (\NnT - \NT) \: \Omega\comp\Kn\: \En 
\\
& = 
-\NT\: \Omega\comp\K \left( \Ei \comp (\id+\xiLDK) + \Li\comp(\id+\xiLDK)\: \Loper\xiL\right)
\\
&\phantom{=} 
-\NT\: \Omega\comp\K \left(  \Dif_z \Phi_{\xiLXp}\comp \Ki \comp (\id+\xiLDK) - I_{2n} \right) \left( \Ei \comp (\id+\xiLDK) + \Li\comp(\id+\xiLDK)\: \Loper\xiL\right)
\\
&\phantom{=} 
-  \NT \: (\Omega\comp\Kn - \Omega\comp\K)\: \En - (\NnT - \NT) \: \Omega\comp\Kn\: \En  
\\ 
&=
-\NT\: \Omega\comp\K \E -\NT\: \Omega\comp\K (\Ei-\E) -\NT\: \Omega\comp\K \: \L\: \Loper\xiL
\\
&\phantom{=}
-\NT\: \Omega\comp\K (\Ei \comp (\id+\xiLDK) - \Ei)
\\
&\phantom{=}
- \NT\: \Omega\comp\K  (\Li\comp(\id+\xiLDK) - \L) \: \Loper\xiL
\\
&\phantom{=} 
-\NT\: \Omega\comp\K\: (  \Dif_z \Phi_{\xiLXp}\comp \Ki \comp (\id+\xiLDK) - I_{2n}) \: ( \Ei \comp (\id+\xiLDK) + \Li\comp(\id+\xiLDK)\: \Loper\xiL) 
\\
&\phantom{=}
- (\NnT - \NT) \: \Omega\comp\Kn\: \En  -  \NT \: (\Omega\comp\Kn - \Omega\comp\K)\: \En
\\
& =: \etanLu+ \etanLd + \etanLt + \etanLq +\etanLc,
\end{split}
\end{equation}
where the addends are numbered in order.
In the previous expression, all addends, but the first, are trivially quadratically small. But, in fact, 
$\Loper\xiL$ is selected in such a way that $\etanLu$ {\em is} quadratically small, since 
\[
\begin{split}
\etanLu 
& = \etaL - \NT\: \Omega\comp\K \:  (\Xoper{\N}\: \xiN + \Delta^2 \Xh + \N\: \Loper\xiN) - \Loper\xiL
\\
& = \etaL - T \xiN - \EredLN \xiN - \NT\: \Omega\comp\K \: \Delta^2 \Xh
       - \OmegaN  \: \Loper\xiN - \Loper\xiL
\\
& = - \EredLN \xiN - \NT\: \Omega\comp\K \: \Delta^2 \Xh - \OmegaN  \: \etaN,
\end{split}
\]
where we apply that $\N^\ttop \Omega\comp \K\: \L= I_n$, \eqref{eq:Ei1}, and 
$\Loper\xiL = \etaL - T\xiN$.
From this, we get a bound for the first addend of \eqref{etanL}:
\begin{equation}\label{QCetanLu}
\norm{\etanLu}_{\rho-3\delta} 
\leq \frac{\QCetanLu}{\delta} \qwnormeta.
\end{equation}
For the second addend, since 
\[
	 \Ei \comp (\id+\xiLDK) - \Ei = 
	\int_0^1 \Dif \Ei \comp (\id + \lambda \xiLDK) d\lambda \: \xiLDK,
\]
we get
\begin{equation}\label{QCetanLd}
	\norm{\etanLd}_{\rho-3\delta} \leq \CNT \cteOmega 
	\norm{\Dif\Ei}_{\rho-2\delta} \norm{\xiLDK}_{\rho-2\delta} 
	\leq \frac{\QCetanLd}{\gamma\delta^{\tau+1}} \qwnormeta,
\end{equation}
applying Cauchy estimates.
The third addend is 
\begin{equation*}
\etanLt = -\NT\: \Omega\comp\K  (\Li\comp(\id+\xiLDK) - \Li + \Li - \L) \: \Loper\xiL,
\end{equation*}
from which, proceeding analogously, we obtain the bound
\begin{equation}\label{QCetanLt}
\begin{split}
\norm{\etanLt}_{\rho-3\delta} 
       & \leq \CNT \cteOmega\: (\norm{\Dif\Li}_{\rho-2\delta} \norm{\xiLDK}_{\rho-2\delta} + \norm{\Li - L}_{\rho-2\delta}) \norm{\Loper\xiL}_{\rho-\delta} \\
&\leq \frac{\QCetanLt}{\gamma\delta^{\tau+1}} \qwnormeta.
\end{split}
\end{equation}
In order to bound the fourth addend, we first observe that
\[
\Dif_z \Phi_s  - I_{2n} = 
\int_0^1 \Dif_z \Xp\comp \Phi_{\lambda s} \: \Dif_z \Phi_s \: d\lambda \: s.
\]
Then 
\begin{equation}\label{QCetanLq}
\begin{split}
\norm{\etanLq}_{\rho-3\delta} 
& \leq \CNT \cteOmega \cteDXp \cteDPhi \norm{\xiLXp}_{\rho-2\delta} 
(\norm{\Ei}_{\rho-\delta} + \sigmaL  \norm{\Loper\xiL}_{\rho-\delta})\\
& \leq \frac{\QCetanLq}{\gamma\delta^{\tau}} \qwnormeta .
\end{split}
\end{equation}
 For the fifth addend, we first observe that 
\[
	\Omega\comp \Phi_s - \Omega = 
	\int_0^1 \Dif\Omega \comp \Phi_{\lambda s} \Xp\comp \Phi_{\lambda s} \: d\lambda \: s.
\]
Then, 
\begin{equation}\label{QCetanLc}
\begin{split}
\norm{\etanLc}_{\rho-3\delta} 
& \leq \frac{\QCetanLc}{\gamma\delta^{\tau+1}} \qwnormeta.
\end{split}
\end{equation}
Hence, collecting \eqref{QCetanLu}, \eqref{QCetanLd}, \eqref{QCetanLt}, \eqref{QCetanLq}, and \eqref{QCetanLc} we get
\begin{equation}\label{QCetanL}
\begin{split}
\norm{\etanL}_{\rho-3\delta} 
& \leq \frac{\QCetanL}{\gamma\delta^{\tau+1}} \qwnormeta,
\end{split}
\end{equation}
which is  bound \eqref{lemma:CxinL}.
With this we finish the proof of  Lemma~\ref{lem:iterative lemma}.
\end{proof}

\subsection{Convergence of the iterative process}\label{ssec:convergence}

Once the quadratic procedure has been
established in Section \ref{ssec:iterative lemma},
proving the convergence of the scheme follows standard arguments, that we will detail for
providing explicit conditions for the KAM theorem.

\begin{proof}
[Proof of Theorem \ref{thm:KAM}]
Let us consider the parameterization $\K_0:=K$ with initial invariance error $\E_0:=\E$, 
whose tangent and normal projections are
$\etaLs{0}$ and $\etaNs{0}$, respectively. We also introduce 
$\L_0:= \L$, $\LT_0:= \LT$, 
$\B_0:=\B$, 
$\N_0:= \N$, $\NT_0:= \NT$, 
and $\T_0:=\T$ 
associated to the initial parameterization.
By applying Lemma \ref{lem:iterative lemma} recursively, at the step $j$ we obtain new objects
$\K_j:= \Kn_{j-1}$,
$\E_j:= \En_{j-1}$,
$\etaLs{j}:= \etanLs{j-1}$, 
$\etaNs{j}:= \etanNs{j-1}$, 
$\L_j:= \Ln_{j-1}$, $\LT_j:= \LnT_{j-1}$, 
$\B_j:=\Bn_{j-1}$, 
$\N_j:= \Nn_{j-1}$, $\NT_j:= \NnT_{j-1}$,  
and $\T_j:=\Tn_{j-1}$. 

The domain of analyticity of these objects is reduced at every step, from the
initial value $\rho_0= \rho$ to a limiting value $\rho_\infty$.  At the step
$j$, the parameterization $K_j$ and associated objects are defined in a strip
of width $\rho_j$, and have been produced from the parameterization $K_{j-1}$,
which is defined in a strip of width $\rho_{j-1}$, throughout computations
(involving small divisors equations and derivatives) that produce three bites
of size 
$\delta_{j-1}$ to the width $\rho_{j-1}$, so then
\[
	\rho_j=\rho_{j-1} - 3 \delta_{j-1}.
\]
If we select a geometric sequence of bites 
\[
\delta_j= \frac{\delta_0}{a^j}
\]
with $a>1$, then, from the identity 
\[
\rho_\infty= \rho_0 - 3 \sum_{j= 0}^\infty \delta_j = \rho_0 - 3 \delta_0 \frac{a}{a-1}, 
\]
we get 
\begin{equation}\label{def:a}
	a= \frac{\rho_0 - \rho_\infty}{\rho_0 - 3\delta_0 -\rho_\infty}= \frac{\rho_0 - \rho_\infty}{\rho_1 -\rho_\infty}.
\end{equation}

Let us assume that we have successfully applied $j$ times
Lemma~\ref{lem:iterative lemma} We observe that condition \eqref{eq:iterative
condition} is required at every step, but the construction has been performed
in such a way that we control $\dist(K_j(\TT^d_{\rho_j}),\partial \cmani_0)$,
$\L_j:= \Ln_{j-1}$, $\LT_j:= \LnT_{j-1}$, $\B_j:=\Bn_{j-1}$, $\N_j:=
\Nn_{j-1}$, $\NT_j:= \NnT_{j-1}$,  and $\aver{T_j}^{\text{-}1}$ uniformly with
respect to $j$, so the constants that appear in Subsection~\ref{ssec:some
lemmas}, displayed in Table~\eqref{tab:some lemmas}, and in Lemma
\ref{lem:iterative lemma}, displayed in Tables~\ref{tab:constants:all:2} and
\ref{tab:constants:all:3}, are taken to be the same for all steps by
considering the worst value of $\delta_j$, that is, $\delta_0$. We also take
the upper bounds $\CRb$ and $\CRdb$ for $\CR(\delta_j)$ and $\CRdb(\delta_j)$.

The first computation is tracking the sequence $\ep_j$ of errors, defined to be
\[
\ep_j = \wnorm{\eta_j}{\rho_j}{\delta_j}.
\]
By defining 
\begin{equation}\label{QCetan}
	\QCetan:= \Abs{\QCetanL, a^\tau \QCetanN}
\end{equation}
we get, recursively, 
\begin{equation}\label{eq:recursion}
\begin{split}
\ep_j  &\leq \frac{\QCetan}{\gamma \delta_{j-1}^{\tau+1}} \ep_{j-1}^2 
\leq \left(\frac{\QCetan}{\gamma \delta_0^{\tau+1}} \right)^{2^j-1} \left(a^{\tau+1}\right)^{2^j-j-1} \ep_0^{2^j} = 
\left(\frac{\QCetan a^{\tau+1}}{\gamma \delta_0^{\tau+1}} \: \ep_0 \right)^{2^j-1} a^{-j(\tau+1)} \ep_0,
\end{split}
\end{equation}
where we used the sums $1+2+\ldots+2^{j-1}=2^j-1$, and
$1(j-1)+2(j-2)+2^2(j-3)\ldots+2^{j-2} 1 = 2^j-j-1$.

By imposing 
\begin{equation}\label{eq:theo:conv:1}
\frac{\QCetan a^{\tau+1}}{\gamma \delta_0^{\tau+1}} \: \ep_0 < \fakeE <1,
\end{equation}
which is included in \eqref{eq:KAM:HYP}, we have
\begin{equation}\label{eq:conv:geom}
\ep_j \leq \kappa^{2^j-1} a^{-j(\tau+1)} \ep_0 \leq (\kappa/a^{\tau+1})^j \ep_0 .
\end{equation}

Now, using expression \eqref{eq:conv:geom}, we check Hypothesis
\eqref{eq:iterative condition} of the iterative lemma, Lemma~\ref{lem:iterative
lemma}, so that we can perform the step $j+1$.  The required sufficient
condition is included in the hypothesis \eqref{eq:KAM:HYP} of the KAM theorem,
whose inequality has several terms that correspond to the different components
in \eqref{def:iterative constant}.

The first condition, using \eqref{eq:conv:geom}, is given by
\begin{equation}\label{eq:theo:conv:2}
\max\{1,\Csym\}\:  \frac{1}{\delta_j} \ep_j 
\leq  \max\{1,\Csym\}\: \frac{1}{\delta_0} (\kappa/a^{\tau})^j \ep_0 
\leq  \max\{1,\Csym\}\:\frac{1}{\delta_0}  \ep_0 < \fake,
\end{equation}
where the last inequality  is included in
\eqref{eq:KAM:HYP}. Checking the second conditions is analogous,
and it is 
\begin{equation}\label{eq:theo:conv:4}
\frac{\CxiL}{\gamma\delta_j^{\tau+1}} \ep_j \leq
\frac{\CxiL}{\gamma\delta_0^{\tau+1}} \kappa^j \ep_0 \leq 
\frac{\CxiL}{\gamma\delta_0^{\tau+1}} \ep_0 <1,
\end{equation}
where the last inequality  is again included in \eqref{eq:KAM:HYP}. Also the last, since:
\begin{equation}\label{eq:theo:conv:last}
	\frac{1}{\gamma\delta_j^{\tau+1}} \ep_j \leq  
	\frac{1}{\gamma\delta_0^{\tau+1}} \kappa^j \ep_0 \leq 
	\frac{1}{\gamma\delta_0^{\tau+1}} \ep_0 < \fakeetaN,  
\end{equation}
which is also included in \eqref{eq:KAM:HYP}.

In order to check the rest of conditions for 
$\K_j$, $\L_j$, $\LT_j$, $\B_j$, $\N_j$, $\NT_j$ and $\aver{\T_j}^{\text{-}1}$, 
we have to relate them to the conditions  corresponding to the initial objects 
$\K_0$, $\L_0$, $\LT_0$, $\B_0$, $\N_0$, $\NT_0$ and $\aver{\T_0}^{\text{-}1}$.

The third condition in \eqref{eq:iterative condition} is checked as follows. We recursively obtain that 
\[
\begin{split}
\dist(\K_j(\TT^d_{\rho_j}),\partial \cmani_0)- 
\frac{\CDeltaKn}{\gamma \delta_j^{\tau}} \ep_j 
& \geq
\dist(\K_0(\TT^d_{\rho_0}),\partial \cmani_0)\, -\, 
\sum_{i=0}^j \frac{\CDeltaKn}{\gamma \delta_i^{\tau}} \ep_i 
\\
& \geq
\dist(\K_0(\TT^d_{\rho_0}),\partial \cmani_0)\, -\, 
\sum_{i=0}^j \frac{\CDeltaKn}{\gamma \delta_0^{\tau}} (\kappa/a)^i \ep_0 \\
& \geq
\dist(\K_0(\TT^d_{\rho_0}),\partial \cmani_0)\, -\,
 \frac{\CDeltaKn}{\gamma \delta_0^{\tau}} \frac{a}{a-\kappa} \ep_0 >0 
\end{split}
\]
and the last inequality reads
\begin{equation}\label{eq:theo:conv:3}
\frac{a}{a-\kappa} \, \frac{\CDeltaKn}{\gamma \delta_0^{\tau}\: \dist(\K_0(\TT^d_{\rho_0}),\partial \cmani_0)} \ep_0 <1
\end{equation}
which is included as a condition into \eqref{eq:KAM:HYP}.

In order to check the fourth condition, we again proceed recursively to obtain that
and then we include the last inequality as 
\begin{equation}\label{eq:theo:conv:objects}
\frac{1}{1-\kappa} \ 
\frac{\CDeltaLn}{\gamma \delta_0^{\tau+1} 
(\sigmaL - \norm{\L_0}_{\rho_0})}  \ep_0 < 1
\end{equation}
into \eqref{eq:KAM:HYP}. The rest of conditions in \eqref{eq:iterative condition}, 
associated to $\LT_j$, $\B_j$, $\N_j$, $\NT_j$ and $\aver{\T_j}^{\text{-}1}$, 
follow by reproducing the same computations.
\[
\begin{split}
\norm{\DK_j}_{\rho_j} + 
\frac{\CDeltaDKn}{\gamma \delta_j^{\tau+1}} \ep_j 
& \leq
\norm{\DK_0}_{\rho_0} \, +\, 
\sum_{i=0}^j \frac{\CDeltaDKn}{\gamma \delta_i^{\tau+1}} \ep_i 
\\
& \leq
\norm{\DK_0}_{\rho_0} \, +\, 
\sum_{i=0}^j \frac{\CDeltaDKn}{\gamma \delta_0^{\tau+1}} \kappa^i \ep_0 \\
& \leq 
\norm{\DK_0}_{\rho_0} \, +\,
 \frac{\CDeltaDKn}{\gamma \delta_0^{\tau+1}} \frac{1}{1-\kappa} \ep_0 < \sigmaDK,
\end{split}
\]
and then we include the last inequality as 
\begin{equation}\label{eq:theo:conv:objects}
\frac{1}{1-\kappa} \ 
\frac{\CDeltaDKn}{\gamma \delta_0^{\tau+1} 
(\sigmaDK - \norm{\DK_0}_{\rho_0})}  \ep_0 < 1
\end{equation}
into \eqref{eq:KAM:HYP}. The rest of conditions in \eqref{eq:iterative condition}, 
associated to $\DKT_j$, $\B_j$, $\N_j$, $\NT_j$ and $\aver{\T_j}^{\text{-}1}$, 
{\em mutatis mutandis}.

Having guaranteed all hypothesis in Lemma~\ref{lem:iterative lemma},
we collect the inequalities
\eqref{eq:theo:conv:1},
\eqref{eq:theo:conv:2},
\eqref{eq:theo:conv:3},
\eqref{eq:theo:conv:4},
\eqref{eq:theo:conv:last}, 
\eqref{eq:theo:conv:objects} for $\L$ and the corresponding inequalities for the other objects, that are included into
hypothesis \eqref{eq:KAM:HYP}. This follows
by introducing the constant $\CtheoE$ as
\begin{equation}\label{def:theorem constant}
\begin{split}
\CtheoE := \max \bigg\{ &
 \gamma\delta^\tau {\frac{\max\{1,\Csym\}}{\fake}},\
 \CxiL,\
 \frac{\delta\: \CtheoDeltaK}{\dist (\K(\TT^d_\rho),\partial \cmani_0)},\ 
 \frac{\CtheoDeltaDK}{\sigmaDK - \norm{\DK}_\rho},\ 
 \frac{\CtheoDeltaDKT }{\sigmaDKT - \norm{\DKT}_\rho}, 
\\
& 
\frac{\CtheoDeltaB}{\sigmaB - \norm{\B}_\rho},\ 
\frac{\CtheoDeltaN}{\sigmaN - \norm{\N}_\rho},\ 
\frac{\CtheoDeltaNT }{\sigmaNT - \norm{\NT}_\rho},
\frac{\CtheoDeltaiT}{\sigmaTinv - \abs{\aver{\T}^{\text{-}1}}},\
\frac{1}{\fakeetaN},\ 
a^{\tau+1} \frac{\QCetan}{\kappa}   
\bigg\},
\end{split}
\end{equation}
where
\begin{equation}\label{eq:final constants 1}
	\CtheoDeltaK= \frac{a}{a-\fakeE} \CDeltaKn,\ 
	\CtheoDeltaDK= \frac{1}{1-\fakeE} \CDeltaDKn, \
	\CtheoDeltaDKT= \frac{1}{1-\fakeE} \CDeltaDKnT,
\end{equation}
and
\begin{equation}\label{eq:final constants 2}
        \CtheoDeltaB= \frac{1}{1-\fakeE} \CDeltaBn,\	
        \CtheoDeltaN= \frac{1}{1-\fakeE} \CDeltaNn, \
	\CtheoDeltaNT= \frac{1}{1-\fakeE} \CDeltaNnT,\
	\CtheoDeltaiT= \frac{1}{1-\fakeE} \CDeltaiTn.
\end{equation}

Finally, since
\[
\frac{\mathfrak{C}}{\gamma \delta^{\tau+1}}\: \ep <1,
\]
which is \eqref{eq:KAM:HYP}, we can apply the iterative process infinitely many times. 
Indeed, since $\ep_j\to 0$ when $j\to 0$, the iterative process 
converges to a true quasi-periodic torus $K_\infty$. Moreover, from the computations above
this object satisfies
$K_\infty \in \Anal(\TT^{d}_{\rho_\infty})$ and the controls
\eqref{eq:distKinf}, 
\eqref{eq:DKinf}, 
 \eqref{eq:DKTinf}, 
\eqref{eq:Binf}, 
\eqref{eq:Ninf},
\eqref{eq:NTinf}, and 
\eqref{eq:iTinf}.
Furthermore, we also get the distances
 between the initial and the limiting objects, 
 \eqref{eq:DeltaKinf}, 
  \eqref{eq:DeltaDKinf}, 
  \eqref{eq:DeltaDKTinf}, 
 \eqref{eq:DeltaBinf},
 \eqref{eq:DeltaNinf},
  \eqref{eq:DeltaNTinf},
 \eqref{eq:DeltaiTinf}.
This completes the proof of  the KAM theorem.
\end{proof}

\begin{remark}\label{rem:optimal strips}
We can argue from \eqref{eq:theo:conv:1} that, once one fixes the initial and final strip sizes $\rho_0$ and $\rho_\infty$, an (almost optimal) choice for the ratio $a$ (or the initial bite $\delta= \delta_0$) is the one that minimizes 
$\frac{a}{\delta_0}$. Since 
\[
\frac{a}{\delta} = \frac{\rho - \rho_\infty}{\delta (\rho-3\delta-\rho_\infty)}
\]
the best choice is for $\delta= \frac{\rho-\rho_\infty}{6}$, for which $a= 2$ and, hence 
\[
\delta_j= \frac{\Delta}{2^{j+1}},
\]
where $\Delta= (\rho_\infty-\rho_0)/3$.
This choice is empirically supported by the computations in \cite{FiguerasHL17}.
\end{remark}

\begin{remark}
The choice of the $\delta_j$ above as the geometric series with ratio $1/2$ is justified
by the following rationale. Let us assume that the constants involved in the theorem 
do not depend on $\delta$ (their dependence on it is very mild) then, 
\ref{eq:recursion} can be written as
\[
\ep_j  \leq 
\left(\frac{\QCetan}{\gamma}\ep_0\right)^{2^j-1} 
\left(\frac{1}{\delta_{j-1}^{1/2^{j-1}} \delta_{j-2}^{1/2^{j-2}} \dots \delta_0} \right)^{2^{j-1}(\tau+1)} 
\ep_0.
\]
Then,  one needs to 
minimize
\begin{equation*}\label{eq:tominimize}
\prod_{j = 0}^\infty
\delta_j^{-\frac{1}{2^j}}
\end{equation*}
or, equivalently 
\begin{equation*}\label{eq:tominimize}
- \sum_{j = 0}^\infty \frac{1}{2^j} \log \delta_j,
\end{equation*}
under the constraint
\begin{equation}\label{eq:constraint}
\sum_{j = 0}^\infty \delta_j = \Delta,
\end{equation}
where  $\delta_j > 0$ for all $j\geq 0$. 

Then, if $(\delta_j^0= 2^{-j-1}\Delta)_{j\geq0}$ is the target geometric sequence obtained in Remark~\ref{rem:optimal strips}, 
for any other positive sequence $(\delta_j)_{j\geq 0}$ satisfying \eqref{eq:constraint}, the function $f:[0,1]\to \RR$ defined 
as 
\[
	f(t)= - \sum_{j= 0}^\infty \frac{1}{2^j} \log( (1-t) \delta^0_j + t \delta_j)
\]
is strictly convex and $f'(0)= 0$. 
\end{remark}

\section{Acknowledgements}

The authors are grateful to Alejandro Luque, Benjamin Meco, Christian Bjerkl\"ov and Gerard Farr\'e for fruitful discussions. 

J.-Ll.F. has been partially supported by the Swedish VR Grant 2019-04591, and 
A.H. has  been supported by the Spanish grant PID2021-125535NB-I00 (MCIU/AEI/FEDER, UE).
This work has also being supported by the Spanish State Research Agency, through the
Severo Ochoa and Mar\'ia de Maeztu Program for Centers and Units of
Excellence in R\&D (CEX2020-001084-M).

\bibliographystyle{plain}
\bibliography{references}

\def\cprime{$'$} \def\cprime{$'$} \def\cprime{$'$} \def\cprime{$'$}
\begin{thebibliography}{10}

\bibitem{Arnold63a}
V.I. Arnold.
\newblock Proof of a theorem of {A}. {N}. {K}olmogorov on the preservation of
  conditionally periodic motions under a small perturbation of the
  {H}amiltonian.
\newblock {\em Uspehi Mat. Nauk}, 18(5 (113)):13--40, 1963.

\bibitem{BroerHS96}
H.W. Broer, G.B. Huitema, and M.B. Sevryuk.
\newblock {\em Quasi-periodic motions in families of dynamical systems. {O}rder
  amidst chaos}.
\newblock Lecture Notes in Math., {V}ol 1645. Springer-Verlag, Berlin, 1996.

\bibitem{Cannas01}
A.~Cannas~da Silva.
\newblock {\em Lectures on symplectic geometry}, volume 1764 of {\em Lecture
  Notes in Mathematics}.
\newblock Springer-Verlag, Berlin, 2001.

\bibitem{CellettiC88}
A.~Celletti and L.~Chierchia.
\newblock Construction of {A}nalytic {KAM} {S}urfaces and {E}ffective
  {S}tability {B}ounds.
\newblock {\em Comm. Math. Phys.}, 118(1):199--161, 1988.

\bibitem{CellettiC07}
A.~Celletti and L.~Chierchia.
\newblock K{AM} stability and celestial mechanics.
\newblock {\em Mem. Amer. Math. Soc.}, 187(878):viii+134, 2007.

\bibitem{Llave01}
R.~de~la Llave.
\newblock A tutorial on {KAM} theory.
\newblock In {\em Smooth ergodic theory and its applications (Seattle, WA,
  1999)}, volume~69 of {\em Proc. Sympos. Pure Math.}, pages 175--292. Amer.
  Math. Soc., Providence, RI, 2001.

\bibitem{GonzalezJLV05}
R.~de~la Llave, A.~Gonz{\'a}lez, {\`A}.~Jorba, and J.~Villanueva.
\newblock K{AM} theory without action-angle variables.
\newblock {\em Nonlinearity}, 18(2):855--895, 2005.

\bibitem{LlaveR90}
R.~de~la Llave and D.~Rana.
\newblock Accurate strategies for small divisor problems.
\newblock {\em Bull. Amer. Math. Soc. (N.S.)}, 22(1):85--90, 1990.

\bibitem{Dumas14}
H.S. Dumas.
\newblock {\em The {KAM} story}.
\newblock World Scientific Publishing Co. Pte. Ltd., Hackensack, NJ, 2014.
\newblock A friendly introduction to the content, history, and significance of
  classical Kolmogorov-Arnold-Moser theory.

\bibitem{FiguerasHL17}
J.-Ll. Figueras, A.~Haro, and A.~Luque.
\newblock Rigorous {C}omputer-{A}ssisted {A}pplication of {KAM} {T}heory: {A}
  {M}odern {A}pproach.
\newblock {\em Found. Comput. Math.}, 17(5):1123--1193, 2017.

\bibitem{FiguerasHL18}
J.-Ll. Figueras, A.~Haro, and A.~Luque.
\newblock On the sharpness of the {R}\"ussmann estimates.
\newblock {\em Commun. Nonlinear Sci. Numer. Simul.}, 55:42--55, 2018.

\bibitem{FiguerasHL20}
Jordi-Llu\'{\i}s Figueras, Alex Haro, and Alejandro Luque.
\newblock Effective bounds for the measure of rotations.
\newblock {\em Nonlinearity}, 33(2):700--741, 2020.

\bibitem{FontichLS09}
E.~Fontich, R.~de~la Llave, and Y.~Sire.
\newblock Construction of invariant whiskered tori by a parameterization
  method. {I}. {M}aps and flows in finite dimensions.
\newblock {\em J. Differential Equations}, 246(8):3136--3213, 2009.

\bibitem{GonzalezHL13}
A.~Gonz\'alez, A.~Haro, and R.~de~la Llave.
\newblock Singularity theory for non-twist {KAM} tori.
\newblock {\em Mem. Amer. Math. Soc.}, 227(1067):vi+115, 2014.

\bibitem{Greene79}
J.M. Greene.
\newblock A method for determining a stochastic transition.
\newblock {\em J. Math. Phys}, 20(6):1183--1201, 1975.

\bibitem{HaroCFLM16}
A.~Haro, M.~Canadell, J.-Ll. Figueras, A.~Luque, and J.-M. Mondelo.
\newblock {\em The parameterization method for invariant manifolds}, volume 195
  of {\em Applied Mathematical Sciences}.
\newblock Springer, [Cham], 2016.
\newblock From rigorous results to effective computations.

\bibitem{HaroL19}
A.~Haro and A.~Luque.
\newblock A-posteriori {KAM} theory with optimal estimates for partially
  integrable systems.
\newblock {\em J. Differential Equations}, 266(2-3):1605--1674, 2019.

\bibitem{Herman86}
M.-R. Herman.
\newblock Sur les courbes invariantes par les diff\'eomorphismes de l'anneau.
  {V}ol.\ 2.
\newblock {\em Ast\'erisque}, (144):248, 1986.
\newblock With a correction to: {\it On the curves invariant under
  diffeomorphisms of the annulus, Vol.\ 1} (French) [Ast\'erisque No. 103-104,
  Soc.\ Math.\ France, Paris, 1983; MR 85m:58062].

\bibitem{Kolmogorov54}
A.N. Kolmogorov.
\newblock On conservation of conditionally periodic motions for a small change
  in {H}amilton's function.
\newblock {\em Dokl. Akad. Nauk SSSR (N.S.)}, 98:527--530, 1954.
\newblock Translated in p. 51--56 of \emph{Stochastic Behavior in Classical and
  Quantum Hamiltonian Systems, Como 1977} (eds. G. Casati and J. Ford) Lect.
  Notes Phys. 93, Springer, Berlin, 1979.

\bibitem{MacKay93}
R.S. MacKay.
\newblock {\em Renormalisation in area-preserving maps}, volume~6 of {\em
  Advanced Series in Nonlinear Dynamics}.
\newblock World Scientific Publishing Co. Inc., River Edge, NJ, 1993.

\bibitem{MarsdenW74}
J.~Marsden and A.~Weinstein.
\newblock Reduction of symplectic manifolds with symmetry.
\newblock {\em Rep. Mathematical Phys.}, 5(1):121--130, 1974.

\bibitem{Moser62}
J.~Moser.
\newblock On invariant curves of area-preserving mappings of an annulus.
\newblock {\em Nachr. Akad. Wiss. G\"ottingen Math.-Phys. Kl. II}, 1962:1--20,
  1962.

\bibitem{Moser66c}
J.~Moser.
\newblock On the theory of quasiperiodic motions.
\newblock {\em SIAM Rev.}, 8(2):145--172, 1966.

\bibitem{Neishtadt81}
A.~I. Neishtadt.
\newblock Estimates in the {K}olmogorov theorem on conservation of
  conditionally periodic motions.
\newblock {\em J. Appl. Math. Mech}, 45(6):1016--1025, 1981.

\bibitem{Poschel82}
J.~P{\"o}schel.
\newblock Integrability of {H}amiltonian systems on {C}antor sets.
\newblock {\em Comm. Pure Appl. Math.}, 35(5):653--696, 1982.

\bibitem{Russmann75}
H.~R{\"u}ssmann.
\newblock On optimal estimates for the solutions of linear partial differential
  equations of first order with constant coefficients on the torus.
\newblock In {\em Dynamical systems, theory and applications (Rencontres,
  Battelle Res. Inst., Seattle, Wash., 1974)}, pages 598--624. Lecture Notes in
  Phys., Vol. 38. Springer, Berlin, 1975.

\bibitem{Russmann76a}
H.~R{\"u}ssmann.
\newblock On optimal estimates for the solutions of linear difference equations
  on the circle.
\newblock In {\em Proceedings of the Fifth Conference on Mathematical Methods
  in Celestial Mechanics (Oberwolfach, 1975), Part I. Celestial Mech.},
  volume~14, 1976.

\bibitem{Sevryuk14}
M.B. Sevryuk.
\newblock The classical {KAM} theory in the last decade: a slow progress.
\newblock
  \url{http://www.mathnet.ru/php/presentation.phtml?option_lang=eng&presentid=10271},
  2014.
\newblock The Seventh International Conference on Differential and Functional
  Differential Equations.

\bibitem{Villanueva08}
J.~Villanueva.
\newblock Kolmogorov theorem revisited.
\newblock {\em J. Differential Equations}, 244(9):2251--2276, 2008.

\bibitem{Villanueva17}
J.~Villanueva.
\newblock A new {A}pproach to the {P}arameterization {M}ethod for {L}agrangian
  {T}ori of {H}amiltonian {S}ystems.
\newblock {\em J. Nonlinear Science}, 27:495--530, 2017.

\bibitem{Villanueva18}
Jordi Villanueva.
\newblock A parameterization method for {L}agrangian tori of exact symplectic
  maps of {$\Bbb R^{2r}$}.
\newblock {\em SIAM J. Appl. Dyn. Syst.}, 17(3):2289--2331, 2018.

\end{thebibliography}

\appendix

\section{An auxiliary lemma to control the inverse of a matrix}\label{app:invertibility}

In several instances in the proofs performed in this paper we control 
control the correctionof inverses of matrices using Neumann series argument.
For instance, this affects to the estimates in \eqref{eq:est:DeltaB}, \eqref{eq:est:DeltaiT}.
For convenience, we present the following auxiliary result separately.
Notice that the result is presented for matrices but
it is directly extended for matrix-valued maps with the
corresponding norm (see Subsection \ref{ssec:analytic setting}).

\begin{lemma}\label{lem:invertibility}
Let $M \in \CC^{n \times n}$ be an invertible matrix
satisfying $\abs{M^{\text{-}1}} < \sigma$. Assume that $\bar M \in \CC^{n \times n}$
satisfies
\begin{equation}\label{eq:lem:aux}
\frac{\sigma \abs{M^{\text{-}1}} \abs{\bar M-M}}{\sigma-\abs{M^{\text{-}1}}} < 1\,.
\end{equation}
Then, we have that $\bar M$ is invertible and
\[
\abs{\bar M^{\text{-}1}} < \sigma\, \qquad \abs{\bar M^{\text{-}1}-M^{\text{-}1}} <  \sigma  \abs{M^{\text{-}1}} \abs{\bar M-M}.
\]
\end{lemma}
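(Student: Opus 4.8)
The plan is to reduce the statement to the standard Neumann series estimate. First I would factor $\bar M = M\,(I_n + M^{\text{-}1}(\bar M - M))$ and set $R = M^{\text{-}1}(\bar M - M)$, so that by submultiplicativity of the operator norm (Subsection~\ref{ssec:basic notation}) we have $\abs{R} \leq \abs{M^{\text{-}1}}\,\abs{\bar M - M}$. The role of hypothesis \eqref{eq:lem:aux} is precisely to make $R$ a contraction: rewriting \eqref{eq:lem:aux} as $\abs{M^{\text{-}1}}\,\abs{\bar M - M} < 1 - \abs{M^{\text{-}1}}/\sigma$, one gets at once $\abs{R} < 1$ (since $\abs{M^{\text{-}1}}/\sigma < 1$) and, more importantly, the sharper bound $1 - \abs{R} > \abs{M^{\text{-}1}}/\sigma$, which is the quantitative input needed below.

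Next I would invoke the Neumann series: since $\abs{R} < 1$, the matrix $I_n + R$ is invertible with $(I_n+R)^{\text{-}1} = \sum_{k\geq 0}(-R)^k$ and $\abs{(I_n+R)^{\text{-}1}} \leq (1-\abs{R})^{\text{-}1}$. Hence $\bar M$ is invertible with $\bar M^{\text{-}1} = (I_n+R)^{\text{-}1} M^{\text{-}1}$, so that
\[
\abs{\bar M^{\text{-}1}} \leq \frac{\abs{M^{\text{-}1}}}{1-\abs{R}} \leq \frac{\abs{M^{\text{-}1}}}{1-\abs{M^{\text{-}1}}\,\abs{\bar M - M}} < \sigma,
\]
where the last inequality is exactly the sharper bound from the previous paragraph. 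Finally, from the resolvent-type identity $\bar M^{\text{-}1} - M^{\text{-}1} = -M^{\text{-}1}(\bar M - M)\,\bar M^{\text{-}1}$ I would conclude
\[
\abs{\bar M^{\text{-}1} - M^{\text{-}1}} \leq \abs{M^{\text{-}1}}\,\abs{\bar M - M}\,\abs{\bar M^{\text{-}1}} < \sigma\,\abs{M^{\text{-}1}}\,\abs{\bar M - M},
\]
using the strict bound $\abs{\bar M^{\text{-}1}} < \sigma$ just established.

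There is no real obstacle here; the one point needing a little care is the bookkeeping of strict versus non-strict inequalities so that both conclusions come out with a strict sign, together with the observation that all the matrix manipulations carry over verbatim to matrix-valued maps equipped with the sup-norm of Subsection~\ref{ssec:analytic setting}, since that norm is still submultiplicative and the Neumann series converges uniformly on the relevant strip.
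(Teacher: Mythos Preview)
Your proof is correct and follows essentially the same approach as the paper: the same factorization $\bar M = M(I_n + M^{\text{-}1}(\bar M - M))$, the same Neumann series bound, and the same resolvent identity for the difference of inverses. The only cosmetic difference is that the paper introduces auxiliary variables $x = \abs{M^{\text{-}1}}/\sigma$ and $\lambda$ equal to the left-hand side of \eqref{eq:lem:aux} and argues via monotonicity of $t \mapsto t/(1-\lambda(1-t))$, whereas you rewrite \eqref{eq:lem:aux} directly as $1 - \abs{R} > \abs{M^{\text{-}1}}/\sigma$; your route is slightly more direct.
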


\begin{proof}
For notational convenience, let us denote 
\[
x= \frac{\abs{M^{\text{-}1}}}{\sigma}, \quad 
\lambda= \frac{\sigma \abs{M^{\text{-}1}} \abs{\bar M-M}}{\sigma-\abs{M^{\text{-}1}}},
\]
so that $x\in ]0,1[$ and $\lambda\in [0,1[$. Since 
\[
\abs{M^{\text{-}1}} \abs{\bar M-M} = \lambda (1-x)  < 1, 
\]
the matrix 
\[
	\bar M= M (I + M^{\text{-}1} (\bar M - M)) 
\]
is invertible and 
\[
	\abs{{\bar M}^{\text{-}1}} \leq \frac{\sigma x}{1- \lambda (1-x)}  < \sigma,
\]
where in the last inequality we use that, since $\lambda<1$, the function $f(x)= x/(1-\lambda(1-x))$ is continuous and strictly increasing in the interval $[0,1]$, and $f(1)= 1$.  Moreover, 
\[
	\abs{\bar M^{\text{-}1}-M^{\text{-}1}} = \abs{ \bar M^{\text{-}1} (M-\bar M) M^{\text{-}1}} < \sigma \abs{M^{\text{-}1}} \abs{\bar M- M}.
\]
\end{proof}

\begin{remark}
In particular, notice that, if $\abs{M^{\text{-}1}} < \sigma$, hypothesis \eqref{eq:lem:aux} holds if 
\begin{equation*}\label{eq:lem:aux2}
\frac{\sigma^2 \abs{\bar M-M}}{\sigma-\abs{M^{\text{-}1}}} < 1.
\end{equation*}
\end{remark}

\section{Compendium of constants involved in the KAM theorem}\label{app:constants}

In this appendix we collect the recipes to compute all constants
involved in the different estimates presented in the proof of the main results of this paper. 
Keeping track of these constants is crucial to apply Theorem~\ref{thm:KAM} 
in particular problems and for concrete values of
parameters. In the following, Table~\ref{tab:some lemmas} corresponds to the geometric
constructions outlined in Subsection~\ref{ssec:some lemmas}, and  
Tables~\ref{tab:constants:all:2} and \ref{tab:constants:all:3} correspond to the iterative lemma
in Subsection~\ref{ssec:iterative lemma}. Finally, Table~\ref{tab:final constants} presents the constants in Theorem~\ref{thm:KAM}, that are defined at the end of its proof, in Subsection~\ref{ssec:convergence}.

The input values are (see Theorem~\ref{thm:KAM}) :
\begin{itemize} 
\item the global bounds 
$\cteOmega$, $\cteG$, $\cteJ$, $\cteJT$, $\cteDOmega$, $\cteDG$, $\cteDJ$, $\cteDJT$, $\cteXH$, $\cteDXH$, $\cteDXHT$, $\cteDDXH$, $\cteTh$, $\cteDTh$, $\cteXp$, $\cteXpT$, $\cteDXp$, $\cteDXpT$, and $\cteDPhi$;
\item the condition numbers 
$\sigmaDK$, $\sigmaDKT$,  $\sigmaL$, $\sigmaLT$, $\sigmaB$, $\sigmaN$, $\sigmaNT$, and $\sigmaTinv$;

\item the control constants  $\fake<1$, $\fakeE<1$, $\fakeetaN$;

\item the strip sizes $\rho_\infty<\rho < r$ and bite $\delta<\frac{\rho-\rho_\infty}{3}$.
\end{itemize}

Constants $\CR(\delta)$ and $\CRd(\delta)$ from Lemmas~\ref{lem:Russmann} and \ref{lem:Russmann-Cauchy} depend on the bite on the strip, $\delta$, and can substituted by the correspondind upper bounds $\CRb$ and $\CRdb$.

\newpage

\color{black}

\bgroup
{
\def\arraystretch{1.5}
\begin{longtable}{| l l l|}
\caption{Constants in Section~\ref{ssec:some lemmas}, Lemmas~\ref{lem:approximate lagrangianity}, \ref{lem:approximate symplecticity}, \ref{lem:errors}, \ref{lem:lie control}, \ref{lem:approximate reducibility}.} 
\label{tab:some lemmas}\\
\hline
Object & Constant & Label \\ 
\hline
\hline
$\OmegaL$  
&
$\CNOmegaL = (d+n-2) \CRd(\delta)$ 
&
\eqref{CNOmegaL} 
\\ 
\hline
$\OmegaN$  
& 
$\CNOmegaN = (\sigmaB)^2 \CNOmegaL$ 
&
\eqref{CNOmegaN}
\\
\hline
$\Esym$  
& 
$\Csym =  \max\{\CNOmegaL,\CNOmegaN\}$ 
&
\eqref{Csym}
\\
\hline
$E$ 
&
$\CLE= \frac{ \CL + \CN \fake}{1-\fake^2},\quad \CNE= \frac{\CN + \CL\fake}{1-\fake^2},
\quad \CE = \CLE + \gamma\delta^\tau \CNE$ 
& 
\eqref{bound:E-eta}
\\
\hline
$E^\ttop$ 
&
$\CLET= n \frac{\CLT + \fake \CNT}{1-\fake^2},\quad \CNET= n \frac{\CNT+ \fake \CLT}{1-\fake^2}, 
\quad \CE = \CLET + \gamma\delta^\tau \CNET$
&
\eqref{bound:ET-eta}
\\
\hline
$\Dif E$ & 
$\CLDE= d (1+ \cteOmega (\CNT \CLE + \CLT\CNE)) \CLE$ %
& 
\\
& 
$\CNDE= d (1+ \cteOmega (\CNT \CLE + \CLT\CNE)) \CNE, \quad \CDE= \CLDE + \gamma\delta^\tau \CNDE$
& 
\eqref{bound:DE-eta}
\\
\hline
$(\Dif E)^\ttop$ & 
$\CLDET= n (1+\cteOmega (\CN \CLET + \CL\CNET)) \CLET$ 
& 
\\
& $\CNDET= n (1+\cteOmega (\CN \CLET + \CL\CNET)) \CNET, \quad \CDET= \CLDET + \gamma\delta^\tau \CNDET$
& 
\eqref{bound:DET-eta}
\\
\hline
$\Loper  K$ 
& 
$\CLieK = \CE \delta\fake+ \cteXH$ 
& \eqref{CLieK} 
\\
\hline
$\Loper  L$ 
& 
$\CLieL = \CDE \fake + \cteDXp \CE \delta\fake +\cteDXH \CL$
& \eqref{CLieL} 
\\ 
\hline
$\Loper  L^\ttop$ 
& 
$\CLieLT = \max \{ \CDET \fake , \cteDXpT \CE \delta\fake \} + \CLT \cteDXHT$ 
& 
\eqref{CLieLT} 
\\
\hline
$\Loper   G_L$ 
& 
$\CLieGL = \CLieLT \cteG \CL + \CLT \cteDG \CLieK \CL + \CLT \cteG \CLieL$ 
& 
\eqref{CLieGL} 
\\ 
\hline
$\Loper   B$ 
& 
$\CLieB = (\sigmaB)^2 \CLieGL$ 
& \eqref{CLieB} 
\\ 
\hline
$\Loper  N$ 
& 
$\CLieN =  \cteDJ  \CLieK \CL \sigma_B + \cteJ \CLieL \sigmaB + \cteJ  \CL \CLieB$ 
& 
\eqref{CLieN} \\ 
\hline
$T$ 
&
$\CT= \CNT \cteTH  \CN$
& \eqref{CT} \\ 
\hline
$\EredLL$ & $\CLEredLL = d + (d+\delta \cteDXp) \CNT \cteOmega \CLE, \quad \CNEredLL = (d+\delta \cteDXp) \CNT \cteOmega \CNE$ & \eqref{CEredLL} \\ 
\hline
$\EredNL$ & $\CLEredNL = (d+\delta \cteDXp) \CLT \cteOmega \CLE, \quad \CNEredNL = d + (d+\delta \cteDXp) \CLT \cteOmega \CNE$ & \eqref{CEredNL} \\ 
\hline
$\EredNN$ & 
$\CLEredNN= \delta\: \CLT \cteDOmega \CLE \CN + \max\{ n + d \CLE \cteOmega \CNT, \delta \cteDXpT \CLE \cteOmega \CN\}$
& 
\\ 
&
$\CNEredNN= \delta\: \CLT \cteDOmega \CNE \CN + \max\{d \CNE \cteOmega \CNT, \delta \cteDXpT \CNE \cteOmega \CN\}$
& 
\\ 
& $\CEredNN= \CLEredNN + \gamma\delta^\tau \CNEredNN$
& \eqref{CEredNN}
\\
\hline
$\EredLN$ 
& $\CLEredLN=  \delta (\sigmaB)^2  \CLT\cteG \cteDJ \CL \CLE + (\sigmaB)^2 \CLEredNL$ 
& 
\\
& $\CNEredLN= \gamma \delta^{\tau+1} (\sigmaB)^2  \CLT\cteG \cteDJ \CL \CNE + \gamma \delta^{\tau} (\sigmaB)^2 \CNEredNL + \sigmaB\CNOmegaL \CLieB$ 
& 
\\ 
& $\CEredLN=\CLEredLN + \CNEredLN$
& \eqref{CEredLN} 
\\
\hline
\end{longtable}}
\egroup

\newpage

\bgroup
{
\def\arraystretch{1.5}
\begin{longtable}{|l l l|}
\caption{Constants in the proof of Lemma \ref{lem:iterative lemma}, steps 1 and 2.
} \label{tab:constants:all:2} \\
\hline
Object & Constant & Label \\
\hline
\hline
$\averxiN$ 
&$\CLaverxiN = \sigmaTinv, \quad \CNaverxiN=  \left(\frac{\delta}{\rho}\right)^{\tau}\!\sigmaTinv \CT \:\CR(\rho) ,\quad  \CaverxiN= \CLaverxiN + \CNaverxiN$
&\eqref{CaverxiN} 
\\
\hline
$\xiN$ 
&$\CLxiN = \CLaverxiN, \quad \CNxiN=  \CNaverxiN + \CR(\delta) ,\quad  \CxiN= \CLxiN + \CNxiN$
&\eqref{CxiN} 
\\
\hline
$\Ki-\K$ 
&$\CLDeltaKi = \CN \CLxiN,\quad \CNDeltaKi= \CN\CNxiN, \quad \CDeltaKi= \CLDeltaKi + \CNDeltaKi$ 
& \eqref{CDeltaKi}
\\
\hline
$\DKi-\DK$ 
&$\CLDeltaDKi = d \CN \CLaverxiN, \quad 
\CNDeltaDKi= d \CN (\CNaverxiN + \CRd(\delta)), \quad \CDeltaDKi = \CLDeltaDKi + \CNDeltaDKi$
& \eqref{CDeltaDKi}
\\
\hline
$\DKiT-\DKT$
&
$\CLDeltaDKiT = n \CNT \CLaverxiN,$
&
\\
&
$\CNDeltaDKiT= n \CNT (\CNaverxiN+\CRd(\delta)), \quad 
\CDeltaDKiT = \CLDeltaDKiT + \CNDeltaDKiT$
& \eqref{CDeltaDKiT}
\\
\hline
$\Li-\L$ 
&$\CLDeltaLi= \CLDeltaDKi  + \delta \cteDXp \CLDeltaKi,\quad \CNDeltaLi= \CNDeltaDKi  + \delta \cteDXp \CNDeltaKi ,\quad
\CDeltaLi = \CLDeltaLi + \CNDeltaLi$ 
&
\eqref{CDeltaLi}
\\
\hline
$\LiT-\LT$ 
& $\CLDeltaLiT= \max\{\CLDeltaDKiT,\delta \cteDXpT \CLDeltaKi\},$ 
&
\\
& $\CNDeltaLiT= \max\{\CNDeltaDKiT, \delta \cteDXpT \CNDeltaKi\},\quad
\CDeltaLiT = \CLDeltaLiT + \CNDeltaLiT$
& 
\eqref{CDeltaLiT}
\\
\hline
\hline
$\Ei$ 
&$\CLEi= \CLE + \cteDXH \CLDeltaKi + \CLieN \CLxiN,$ 
& 
\\
&  $\CNEi= \gamma \delta^\tau(\CNE + \sigmaN) + 
\cteDXH \CNDeltaKi+ \CLieN \CNxiN, \quad
\CEi= \CLEi + \CNEi$ 
& \eqref{CEi}
\\
\hline
$\etaiN$ 
& $\QCetaiN= (\CDeltaLiT \cteOmega + \delta \CLT \cteDOmega \CDeltaKi) \CEi +
\CEredNN \CxiN + \delta \CLT \cteOmega \frac{1}{2} \cteDDXH (\CDeltaKi)^2$ 
& \eqref{QCetaiN}
\\
\hline 
\end{longtable}
}

\egroup

\newpage

\bgroup
{
\def\arraystretch{1.5}
\begin{longtable}{|l l l|}
\caption{Constants in the proof of Lemma~\ref{lem:iterative lemma}, steps 3 and 4.
} \label{tab:constants:all:3} \\
\hline
Object & Constant & Label \\
\hline
\hline
$\xiL$ 
&
$\CLxiL = \CR(\delta) (1+\CT\CLxiN), \quad \CNxiL= \CR(\delta) \CT \CNxiN, \quad \CxiL= \CLxiL + \CNxiL$
& 
\eqref{CxiL} 
\\
\hline
$\Kn-\K$ 
&
$\CLDeltaKn =  \cteDPhi \CL \CLxiL+\gamma\delta^\tau \CLDeltaKi,$ 
&
\\
& 
$\CNDeltaKn= \cteDPhi \CL \CNxiL + \gamma\delta^\tau \CNDeltaKi, \quad \CDeltaKn= \CLDeltaKn + \CNDeltaKn$ 
& 
\eqref{CDeltaKn}
\\
\hline
$\DKn-\DK$ 
&
$\CLDeltaDKn= d \cteDPhi \CL \CLxiL + \gamma\delta^\tau \CLDeltaDKi,\quad \CNDeltaDKn= d \cteDPhi \CL \CNxiL + \gamma\delta^\tau \CNDeltaDKi,$
&
\\
&
$\CDeltaDKn = \CLDeltaDKn + \CNDeltaDKn$
&
\eqref{CLDeltaKn}
\\
\hline
$\DKnT-\DKT$ &
$\CLDeltaDKnT= 2n \cteDPhi \CL \CLxiL + \gamma\delta^\tau \CLDeltaDKiT,\quad \CNDeltaDKnT= 2n \cteDPhi \CL \CNxiL + \gamma\delta^\tau \CNDeltaDKiT,$
&
\\
&
$\CDeltaDKnT = \CLDeltaDKnT + \CNDeltaDKnT$
&
\eqref{CLDeltaKnT}
\\
\hline
$\Ln-\L$ &
$\CLDeltaLn= \CLDeltaDKn  + \delta \cteDXp \CLDeltaKn,$ 
& 
\\
&
$\CNDeltaLn= \CNDeltaDKn  + \delta \cteDXp \CNDeltaKn ,\quad
\CDeltaLn= \CLDeltaLn + \CNDeltaLn$
& \eqref{CDeltaLn}
\\
\hline$\LnT-\LT$ &
$\CLDeltaLnT= \max\{\CLDeltaDKnT,\delta \cteDXpT \CLDeltaKn \},$
&
\\
&
$\CNDeltaLnT= \max\{\CNDeltaDKnT, \delta \cteDXpT \CNDeltaKn \},\quad
\CDeltaLnT = \CLDeltaLnT + \CNDeltaLnT$
& \eqref{CDeltaLnT}
\\
\hline
$\GLn-\GL$ &
$\CLDeltaGLn= \CLDeltaLnT \cteG \CL + \delta \CLT \cteDG \CLDeltaKn \CL+
\CLT \cteG  \CLDeltaLn,$ 
& 
\\
& 
$\CNDeltaGLn= \CNDeltaLnT \cteG \CL + \delta \CLT \cteDG \CNDeltaKn \CL+
\CLT \cteG  \CNDeltaLn,\quad
\CDeltaGLn = \CLDeltaGLn + \CNDeltaGLn$
& \eqref{CDeltaGLn}
\\
\hline
$\Bn - \B$ & 
$\CLDeltaBn = (\sigmaB)^2\: \CLDeltaGLn, \quad
\CNDeltaBn = (\sigmaB)^2\: \CNDeltaGLn, \quad
\CDeltaBn = (\sigmaB)^2\: \CDeltaGLn$
& \eqref{CDeltaBn}
\\
\hline
$\Nn-\N$ &
$\CLDeltaNn=  \delta \cteDJ \CLDeltaKn \CL \sigmaB + \cteJ \CLDeltaLn \sigmaB + 
\cteJ \CL  \CLDeltaBn,$ 
& 
\\
& 
$\CNDeltaNn=   \delta \cteDJ \CNDeltaKn \CL \sigmaB + \cteJ \CNDeltaLn \sigmaB + 
\cteJ \CL \CNDeltaBn,\quad
\CDeltaNn = \CLDeltaNn + \CNDeltaNn$
& \eqref{CDeltaNn}
\\
\hline
$\NnT-\NT$ &
$\CLDeltaNnT=  \delta \sigmaB \CL \cteDJT \CLDeltaKn  +  \sigmaB \CLDeltaLn \cteJT + 
\CLDeltaBn \CL \cteJT,$ 
&
\\
& 
$\CNDeltaNnT=  \delta \sigmaB \CL \cteDJT \CNDeltaKn  +  \sigmaB \CNDeltaLn \cteJT + 
\CNDeltaBn \CL \cteJT,\quad
\CDeltaNnT = \CLDeltaNnT + \CNDeltaNnT$
& \eqref{CDeltaNnT}
\\
\hline
$\Tn-\T$
&
$\CLDeltaTn=  \CLDeltaNnT \cteTh \CN +  \delta \CNT \cteDTh \CLDeltaKn \CN +
 \CNT \cteTh  \CLDeltaNn,$ 
&
\\
& 
$\CNDeltaTn=  \CNDeltaNnT \cteTh \CN +  \delta \CNT \cteDTh \CNDeltaKn \CN +
 \CNT \cteTh  \CNDeltaNn,\quad
\CDeltaTn = \CLDeltaTn + \CNDeltaTn$
& \eqref{CDeltaTn}
\\
\hline
$\aver{\Tn}^{\text{-}1} - \aver{T}^{\text{-}1}$ & 
$\CLDeltaiTn =(\sigmaTinv\!)^2\: \CLDeltaTn, \quad
\CNDeltaiTn = (\sigmaTinv\!)^2\: \CNDeltaTn, \quad
\CDeltaiTn = (\sigmaTinv\!)^2\: \CDeltaTn$
& \eqref{CDeltaiTn}
\\
\hline
\hline
$\LieO\xiL$
& $\CLLiexiL= 1+ \CT\CLxiN,\quad \CNLiexiL= \CT\CNxiN,\quad 
\CLiexiL= \CLLiexiL + \CNLiexiL$
&\eqref{CLiexiL}
\\
\hline
$\En$ &
$\CLEn= \cteDPhi  (\CLEi + \sigmaL \CLLiexiL), \quad \CNEn= \cteDPhi  (\CNEi + \sigmaL \CNLiexiL), \quad \CEn= \CLEn + \CNEn$
&\eqref{CEn}
\\
\hline
$\etanN$
&$\QCetanN= (1+n) (1 + \CNOmegaL \CLiexiL \fakeetaN) \QCetaiN$
&\eqref{QCetanN}
\\
\hline
$\etanLu$
& $\QCetanLu= \CEredLN \CxiN + \delta \CNT \cteOmega \frac{1}{2} \cteDDXH (\CDeltaKi)^2 
+ \gamma \delta^{\tau} \CNOmegaN$ 
&
\eqref{QCetanLu}
\\
\hline
$\etanLd$ 
&
$\QCetanLd= \CNT \cteOmega\: d \: \CEi \CxiL$ 
&
\eqref{QCetanLd}
\\
\hline
$\etanLt$
&
$\QCetanLt=  \CNT \cteOmega\: ( d\: \sigmaL \CxiL + \gamma \delta^\tau \CDeltaLi)  \CLiexiL$ 
&
\eqref{QCetanLt}
\\
\hline
$\etanLq$
&
$\QCetanLq=  \CNT \cteOmega \cteDXp \cteDPhi \CxiL (\CEi + \sigmaL \CLiexiL ) $ 
&
\eqref{QCetanLq}
\\
\hline
$\etanLc$
&
$\QCetanLc=   (\CDeltaNnT \cteOmega + \delta \CNT \cteDOmega \CDeltaKn) \CEn$ 
&
\eqref{QCetanLc}
\\
\hline
$\etanL$
&
$\QCetanL= \gamma\delta^\tau \QCetanLu + \QCetanLd +  \QCetanLt +  \delta \QCetanLq + \QCetanLc$
&
\eqref{QCetanL}
\\
\hline
\end{longtable}
}
\egroup

\newpage

\bgroup
{
\def\arraystretch{1.5}
\begin{longtable}{|l l|}
\caption{Constants in Theorem~\ref{thm:KAM}, defined in Subsection~\ref{ssec:convergence}.
} \label{tab:final constants} \\
\hline
Constant & Label \\
\hline
\hline
$a= \frac{\rho - \rho_\infty}{\rho - 3\delta -\rho_\infty}$ & \eqref{def:a}
\\
\hline 
$\QCetan = \Abs{\QCetanL, a^\tau \QCetanN}$ & \eqref{QCetan}
\\
\hline
$\CtheoDeltaK= \frac{a}{a-\fakeE} \CDeltaKn$ &\eqref{eq:final constants 1}
\\
\hline
$\CtheoDeltaDK= \frac{1}{1-\fakeE} \CDeltaDKn$ &\eqref{eq:final constants 1}
\\
\hline
$\CtheoDeltaDKT= \frac{1}{1-\fakeE} \CDeltaDKnT$ &\eqref{eq:final constants 1}
\\
\hline
 $\CtheoDeltaB= \frac{1}{1-\fakeE} \CDeltaBn$ & \eqref{eq:final constants 2}
 \\
 \hline
 $\CtheoDeltaN= \frac{1}{1-\fakeE} \CDeltaNn$ & \eqref{eq:final constants 2} 
 \\
 \hline
 $\CtheoDeltaNT= \frac{1}{1-\fakeE} \CDeltaNnT$ & \eqref{eq:final constants 2}
 \\
 \hline
 $\CtheoDeltaiT= \frac{1}{1-\fakeE} \CDeltaiTn$ & \eqref{eq:final constants 2}
 \\
 \hline
 $\CtheoE = \max \bigg\{ \gamma\delta^\tau {\frac{\max\{1,\Csym\}}{\fake}},\
 \CxiL,\
 \frac{\delta\: \CtheoDeltaK}{\dist (\K(\TT^d_\rho),\partial \cmani_0)},\ 
 \frac{\CtheoDeltaDK}{\sigmaDK - \norm{\DK}_\rho},\ 
 \frac{\CtheoDeltaDKT }{\sigmaDKT - \norm{\DKT}_\rho}, $ &
\\
$\phantom{\CtheoE = \max \bigg\{} 
 \frac{\CtheoDeltaB}{\sigmaB - \norm{\B}_\rho},\ 
\frac{\CtheoDeltaN}{\sigmaN - \norm{\N}_\rho},\ 
\frac{\CtheoDeltaNT }{\sigmaNT - \norm{\NT}_\rho},
\frac{\CtheoDeltaiT}{\sigmaTinv - \abs{\aver{\T}^{\text{-}1}}},\
\frac{1}{\fakeetaN},\ 
a^{\tau+1} \frac{\QCetan}{\kappa}   \bigg\}$
 & \eqref{def:theorem constant}
 \\
 \hline
\end{longtable}
}
\egroup

\end{document}